\documentclass[11pt]{article}
\usepackage[letterpaper,twoside,outer=1in,vmargin=1in,]{geometry}
\usepackage{setspace}

\usepackage{amsmath,amsfonts,amsthm,url,graphicx,subcaption,float,booktabs,bbm,enumitem,natbib}
\usepackage{algorithm,algpseudocode}
\usepackage{tikz,caption,forest}\usepackage{multicol}
\tikzset{
	treenode/.style = {shape=rectangle, rounded corners,
		draw, align=center, 
		minimum height=2ex, text depth=0.25ex,
		top color=white, bottom color=blue!20},
	root/.style     = {treenode, font=\Large\rmfamily, bottom color=red!30},
	env/.style      = {treenode, font=\ttfamily\normalsize},
}
\usetikzlibrary{arrows.meta}
\algnewcommand{\Initialize}[1]{%
	\State \textbf{Initialize:}
	\Statex \hspace*{\algorithmicindent}\parbox[t]{.8\linewidth}{\raggedright #1}
}
\algnewcommand{\algorithmicgoto}{\textbf{go to}}%
\algnewcommand{\Goto}[1]{\algorithmicgoto~step~\ref{#1}}%
\usepackage[normalem]{ulem}

\usepackage[hidelinks,bookmarksnumbered=true]{hyperref}
\usepackage{todonotes,bm}
\usepackage{tikz}
\usepackage{pgfplots}
\pgfplotsset{compat=newest}
\pgfdeclareplotmark{mystar}{
\node[star,star point ratio=2.25,minimum size=6pt,
inner sep=0pt,draw=black,solid,fill=red] {};
}
\usetikzlibrary{decorations.pathreplacing}
\usepackage{xcolor}

\usepgflibrary{arrows}
\usetikzlibrary{calc}
\usetikzlibrary{plotmarks}
\usetikzlibrary{arrows.meta,positioning}
\usetikzlibrary{patterns}

\usepackage{url}
\usepackage{longtable,tabularx,multirow,makecell}
\usepackage{amssymb}
\usepackage{psfrag}
\usepackage{amsmath}
\usepackage{setspace}
\newcommand{\exclude}[1]{}
\usepackage{bm}
\usepackage{mathtools}
\usepackage{bbm}
\usepackage[flushleft]{threeparttable}
\usepackage{algorithm}
\usepackage{algpseudocode}
\algdef{SE}[DOWHILE]{Do}{doWhile}{\algorithmicdo}[1]{\algorithmicwhile\ #1}%
\algnewcommand{\Or}{\textbf{or}}
\algnewcommand{\And}{\textbf{and}}

\usepackage{thmtools} 
\usepackage{thm-restate}

\usepackage{cleveref}
\usepackage{tikz}
\usetikzlibrary{calc}
\usetikzlibrary{plotmarks}
\usetikzlibrary{backgrounds}

\declaretheorem[name=Theorem]{theorem}
\declaretheorem[name=Proposition]{proposition}
\declaretheorem[name=Lemma]{lemma}

\declaretheorem[name=Corollary]{corollary}
\declaretheorem[name=Assumption]{assumption}

\declaretheorem[name=Example]{example}

\def\E{{\mathbb E}}

\def\Pr{{\mathbb{P}}}

\def\w{\xi}

\def\Re{\mathbb{R}}

\def\I{\mathbb{I}}

\def\hat{\widehat}
\def \w{\xi}

\def \Ze{{\mathbb{Z}}}

\def\X{{\mathcal X}}

\def\Re{{\mathbb R}}

\def\CVaR{{\mathrm{CVaR}}}
\def\VaR{{\mathrm{VaR}}}

\def\alsox{{\mathrm{ALSO}{\text-}\mathrm{X}}}

\def\alsoxs{{\mathrm{ALSO}{\text-}\mathrm{X}\#}}

\def\as{A\#}

\DeclareMathOperator{\proj}{proj}

\DeclareMathOperator*{\arginf}{arginf} 

\newcommand{\trxi}{\tilde{\bm{\xi}}}
\newcommand{\rxi}{\bm{\xi}}
\usepackage{todonotes,bm}

\usepackage{comment}

\usepackage{enumitem}
\usepackage{bigints}
\usepackage{mwe}

\newcommand{\vect}[1]{\boldsymbol{\bm{#1}}}

\renewcommand{\arraystretch}{1.5}

\newcommand*{\QEDA}{\hfill\ensuremath{\square}}
\newcommand*{\QEDB}{\hfill\ensuremath{\diamond}}

\definecolor{bluegreen}{RGB}{0,158,115}

\let\cline\cmidrule

\allowdisplaybreaks

\newcommand{\rev}[1]{#1}
\newcommand{\ack}[1]{}
\renewcommand{\citealt}[1]{\cite{#1}}
\renewcommand{\citep}[1]{\cite{#1}}
\renewcommand{\citet}[1]{\cite{#1}}

\author{Nan Jiang$^1$, Rui Chen$^2$\\
$^1$ The Hong Kong University of Science and Technology (nanjiang@ust.hk)\\
$^2$ The Chinese University of Hong Kong, Shenzhen (rchen@cuhk.edu.cn)}
\title{Tightening CVaR Approximations via Scenario-Wise Scaling for Chance-Constrained Programming}

\begin{document}
\maketitle
\begin{abstract}
    \rev{Chance-constrained programs (CCPs) provide a powerful modeling framework for decision-making under uncertainty, but their nonconvex feasible regions make them computationally challenging.} A widely used convex inner approximation replaces chance constraints with Conditional Value-at-Risk (CVaR) constraints; however, the resulting solutions can be overly conservative and suboptimal. We propose a \rev{scenario-wise} scaling approach that strengthens CVaR approximations for CCPs \rev{with finitely supported uncertainty}. \rev{The method introduces scaling factors that reweight individual scenarios within the CVaR constraint, yielding a family of potentially tighter inner approximations}. We establish sufficient conditions under which, for a suitable choice of scaling factors, the scaled CVaR approximation attains the same optimal value as the original CCP and admits a \rev{(near-)}optimal solution of the CCP. We show that these conditions are tight and further relax them in the convex setting. We also show that optimizing over \rev{scenario-wise} scaling factors is NP-hard. To address this computational challenge, we develop efficient heuristic and sequential convex approximation algorithms that iteratively update the scaling factors and generate improved feasible solutions. Numerical experiments demonstrate that the proposed methods consistently improve upon standard CVaR and state-of-the-art convex approximations, often reducing conservativeness while maintaining tractability.
\end{abstract}
\maketitle

\section{Introduction}

Chance-constrained programs (CCPs) provide a principled way to optimize decisions under uncertainty by explicitly controlling the probability of constraint violation. Specifically, a CCP chooses a decision $\bm x\in \X\subseteq\Re^n$ to minimize (without loss of generality) a linear cost subject to the requirement that some uncertain constraint holds with high probability:
\begin{align}
\label{eq_ccp}
v^* =\min_{\bm x \in \X } \left\{ \bm{c}^\top\bm x\colon\Pr\left\{\trxi \colon g(\bm x,\trxi)\leq 0\right\} \geq 1-\varepsilon\right \},
\end{align}
where $\trxi$ is a random vector, $g(\cdot,\cdot)$ models the uncertain feasibility constraint, and $\varepsilon\in(0,1)$ is a risk tolerance. Throughout this paper, we assume that CCP \eqref{eq_ccp} is feasible, and the uncertainty is finitely supported, as specified below.

\begin{assumption}
\label{assumption_finite}
The underlying probability distribution of $\trxi$ is finite, i.e. $\trxi$ has $N$ possible realizations (i.e., scenarios) $\{\rxi^1,\rxi^2,\cdots, \rxi^N\}$ with $p_i=\Pr\{\trxi=\rxi^i\}$ for $i\in[N]$, and $\sum_{i\in[N]}p_i=1$.
\end{assumption}
This assumption is \rev{common} in the CCP literature (see, e.g., \citealt{luedtke2010integer,kuccukyavuz2012mixing,qiu2014covering,ahmed2018relaxations}), \rev{as one can often approximate a general distribution using a finite sample}. With the finitely supported distribution $\Pr$, we can equivalently rewrite CCP \eqref{eq_ccp} as
\begin{align}
\label{eq_ccp_finite}
v^* =\min_{\bm x \in \X } \left\{ \bm{c}^\top\bm x\colon \sum_{i\in[N]}p_i\I\left[ g(\bm x,\rxi^i)\leq 0\right] \geq1-\varepsilon\right\},
\end{align}
where $\I[\cdot]$ is the zero-one indicator function.

Without loss of generality, we assume that the function $ g(\bm{x},\bm{\xi})$ is defined as $g(\bm{x},\bm{\xi})= \max_{j\in[J]} g_j(\bm x,\bm\xi)$, where $g_j(\bm x,\bm\xi)\colon \Re^n\times \Xi\rightarrow {\Re}$ for all $j\in[J]: =\{1,\dots,J\}$. Therefore, formulation \eqref{eq_ccp_finite} can model both single and joint chance-constrained programs. When $J = 1$, CCP~\eqref{eq_ccp_finite} includes a single chance constraint; otherwise, it contains a joint chance constraint where $g(\bm x,\rxi^i)\leq 0$ if and only if $g_j(\bm x,\rxi^i)\leq 0$ for each $j\in [J]$.  

\subsection{Motivation}\label{sec:motivation}

Despite its modeling power, CCP \eqref{eq_ccp_finite} is computationally challenging: even with linear $g(\cdot,\rxi)$ and $\X=\Re_+^n$, the problem is NP-hard \citep{luedtke2010integer}. As a result, a large body of work develops convex approximations of~\eqref{eq_ccp_finite}. A convenient way to express the chance constraint is through the violation probability.  Indeed, CCP \eqref{eq_ccp_finite} is equivalent to
\begin{align}
\label{eq_ccp_finite_1}
v^*=\min_{\bm x \in \X } \left\{ \bm{c}^\top\bm x:\; \sum_{i\in[N]}p_i\,\I\!\left[g(\bm x,\rxi^i)>0\right] \le \varepsilon\right\},
\end{align}
where we define the (nonconvex) violation probability function
\[
\bar p(\bm x):=\sum_{i\in[N]}p_i\,\I\!\left[g(\bm x,\rxi^i)>0\right].
\]
\rev{One particular approach to derive conservative inner approximations of~\eqref{eq_ccp_finite_1} is to replace $\bar p(\bm x)$ with a convex approximation $\hat p(\bm x)$ satisfying $\hat p(\bm x)\ge \bar p(\bm x)$ for all $\bm x\in\X$. Among convex approximations obtained by upper-bounding the indicator function via a convex function, the Conditional Value-at-Risk ($\CVaR$) approximation (i.e., $\hat{p}(\bm x) =  \inf_{\beta < 0}{(-\beta)^{-1}}\{\sum_{i\in[N]}p_i[g(\bm x,\rxi^i)-\beta]_+\}$) is known to be the tightest \citep{nemirovski2007convex}. Despite its tightness, the CVaR approximation differs significantly from the original CCP in many aspects. One particular property of the original CCP that is lost in the CVaR approximation is the invariance with respect to scaling of each scenario constraint.}

\rev{Specifically, for any fixed $\bm\alpha\in\Re_{++}^N$, define $h_{\bm \alpha}(\bm x,\rxi^i):=\alpha_i g(\bm x,\rxi^i)$ for $i\in[N]$. Then constraints
\[
g(\bm x,\rxi^i)\leq 0 \Leftrightarrow h_{\bm \alpha}(\bm x,\rxi^i)\leq 0
\]
are equivalent scenario by scenario. Therefore, the chance constraint itself is unchanged by positive scaling of each scenario constraint.} In contrast, convex approximations that aggregate scenario functions, such as the CVaR approximation, depend on the magnitudes of scenario constraints. Therefore, changing the scale of individual scenarios can change the strength of the approximation even though the underlying CCP is identical.
\rev{This issue is particularly visible in CCPs with a covering structure, which appear in numerous applications where certain demands have to be met with high probability (see, e.g., \citealt{deng2016decomposition,dentcheva2000concavity,shiina1999numerical,takyi1999surface,talluri2006vendor}). A common special case has $\X= \Re_+^n$, $\bm{c}\in \Re_+^n$, and $g(\bm{x},\bm{\xi})= \max_{j\in[J]} g_j(\bm x,\bm\xi)=\max_{j\in[J]}{b}_j^i-(\bm{A}_j^i)^\top\bm{x}$ with $\bm{A}_j^i\in \Re_+^{n}$, $ {b}_j^i>0$ for $ i\in [N]$ and $j\in[J]$. A standard preprocessing step \citep{qiu2014covering,xie2020bicriteria} is row-wise normalization, i.e., dividing each row ${b}_j^i-(\bm{A}_j^i)^\top\bm{x}$ by the demand ${b}_j^i$ so that each normalized constraint has a unit constant term. This normalization preserves the feasible set of the original CCP, but it can change the optimal solutions of conservative inner approximations like CVaR. More generally, different choices of the scaling vector $\bm\alpha$ in the CVaR approximation may lead to different optimal solutions and optimal objective values. A suitable choice of $\bm\alpha$ potentially reduces conservativeness and improves the optimal objective value of the approximation (see Example~\ref{example_motivate_covering} in Appendix~\ref{appendix_sec_example} for a concrete example).}

\subsection{Literature Review}

Initially introduced by \cite{charnes1958cost,charnes1963deterministic}, CCPs have been widely used to handle uncertainty in optimization problems. For example, in the energy sector, CCPs can be employed to address the challenges posed by various sources of uncertainty. Uncertainty may come from the intermittent nature of renewable energy sources, fluctuations in energy demand, potential transmission line failures, and other stochastic factors that can impact the overall performance and reliability of the energy systems (see, e.g., \citealt{cho2023exact,porras2024unifying}). In finance, CCPs are widely used in risk management, specifically through the optimization of investment portfolios under uncertain market conditions. By incorporating market volatility and other stochastic factors into financial models, CCPs enable financial analysts and portfolio managers to devise strategies that balance potential returns with acceptable levels of risk (see, e.g., \citealt{pagnoncelli2009computational,deng2021scenario}). In logistics, CCPs facilitate the effective planning and operation of supply chains by accounting for the variability in demand and uncertainties in transportation. These models allow for optimizing inventory levels, routing, and scheduling, ensuring supply chain operations remain efficient and resilient against disruptions (see, e.g., \citealt{dinh2018exact,ghosal2020distributionally}). Interested readers are referred to \cite{ahmed2008solving,kuccukyavuz2022chance} for a comprehensive review.

Despite its significance, the feasible region of CCP \eqref{eq_ccp} is generally nonconvex, making the problem challenging to solve directly. 
To address this, several approaches have been proposed in the literature to solve CCP \eqref{eq_ccp}.  One approach is the sample average approximation (SAA) method proposed by \cite{ruszczynski2002probabilistic,luedtke2008sample}, which reformulates CCP \eqref{eq_ccp} as a mixed-integer program.
Standard optimization solvers can then handle this reformulation; however, solving it to optimality may still be computationally challenging in practice.
Another approach is to develop convex inner approximations of the nonconvex chance constraint (see, e.g., \citealt{nemirovski2006scenario,calafiore2006scenario,nemirovski2007convex,ahmed2017nonanticipative}). The best-known convex approximation is to replace the chance constraint in CCP \eqref{eq_ccp} with the conditional value-at-risk ($\CVaR$) approximation proposed by \cite{nemirovski2007convex}. The CVaR approximation usually returns a feasible yet sub-optimal solution. Recently, bisection-based methods \citep{ahmed2017nonanticipative,jiang2022also,jiang2025also} like $\alsox$ and $\alsoxs$ are proposed to improve the CVaR approximation. This improvement occurs because $\alsox$ refines the solution iteratively, allowing for a more effective optimization process compared to the one-shot CVaR approximation. 

In this paper, we propose to use 
\rev{scenario-wise scaling}
to improve the CVaR approximation. Scaling the constraints is crucial for ensuring the effectiveness and efficiency of the optimization process (see, e.g., \citealt{lillo1993solving}) and is important for improving the performance of LP/MIP solvers (see, e.g., \citealt{berthold2021learning} and the references therein).
Scaling within the CVaR approximation of chance-constrained programs has appeared in the literature, particularly for achieving a better approximation for joint chance constraints (see, e.g., \citealt{chen2010cvar,zymler2013distributionally,yang2016distributionally,chen2023approximations}). \cite{chen2010cvar} use scaling to approximate joint chance-constrained problems, improving upon the standard approach using Bonferroni's inequality. \cite{zymler2013distributionally} show that CVaR approximation can be exact with scaling for distributional robust joint chance-constrained problems with moment ambiguity sets. 
\rev{However, existing works focus on scaling of individual constraints in joint chance-constrained settings and do not consider scaling applied to individual scenarios. As a result, the conditions under which the scenario-wise scaling within the CVaR approximation of a CCP is exact, and how it relates to the optimal objective value of a CCP, have remained unclear}. This work fills this gap. 
In particular, we identify sufficient conditions under which the scaled CVaR approximation preserves an optimal solution of a CCP, and \rev{we further establish NP-hardness results for the scaled CVaR approximation}.
We also develop efficient algorithms to improve CVaR approximation \rev{using scenario-wise scaling}, and our numerical study demonstrates the effectiveness of the proposed algorithms.

\noindent\textbf{Organization.}
The remainder of the paper is organized as follows: Section~\ref{sec_scaling} details the scaling procedure and illustrates the advantages of scaling. Section~\ref{sec_solution_approach} introduces an efficient heuristic. Section~\ref{sec_numerical_study} numerically validates the proposed methods. Section~\ref{sec_conclusion} concludes the paper.

\noindent\textbf{Notation.} The following notation is used throughout the paper. We use bold letters (e.g., $\vect{x},\vect{A}$) to denote vectors and matrices and use corresponding non-bold letters to denote their components. \rev{We use $\Re_+$(/$\Ze_+$) to denote the set of nonnegative real(/integer) numbers, and $\Re_{++}$ to denote the set of positive numbers, i.e., $\Re_+:=[0,\infty)$ and $\Re_{++}:=(0,\infty)$.} We let $\bm{e}$ be the vector of all ones.
Given an integer $n$, we let $[n]:=\{1,2,\ldots,n\}$. Given a real number $t$, we let $(t)_+:=\max\{t,0\}$. 
Given a finite set $I$, we let $|I|$ denote its cardinality. We let $\trxi$ denote a random vector and denote its realizations by $\bm\xi$.  

\section{\rev{Scenario-Wise Scaled} CVaR Approximation}\label{sec_scaling}
For a given random variable $\tilde{\bm{X}}$ with probability distribution $\Pr$ and cumulative distribution function $F_{\tilde{\bm{X}}}(s)=\Pr\{\tilde{\bm{X}}\leq s\}$, and a given risk parameter $\varepsilon\in(0,1)$, $(1-\varepsilon)$ Value-at-Risk ($\VaR$) of $\tilde{\bm{X}}$ is defined as $\VaR_{1-\varepsilon}(\tilde{\bm{X}}):=\min_s \{ s: F_{\tilde{\bm{X}}}(s)\geq 1-\varepsilon\}$. Following the definition of $\VaR$, $$\Pr\left\{\tilde{\bm{X}}\leq 0\right\} \geq 1-\varepsilon \Leftrightarrow \VaR_{1-\varepsilon}(\tilde{\bm X})\leq 0.$$
The corresponding Conditional Value-at-Risk ($\CVaR$) is defined by $$\CVaR_{1-\varepsilon}(\tilde{\bm{X}}):=\min_{\beta} \{ \beta+{\varepsilon}^{-1}\E_{\Pr}[ \tilde{\bm{X}}-\beta ]_+\},$$
which serves as an upper bound of $\VaR_{1-\epsilon}(\tilde{\bm X})$. Note that 
$$\min_{\beta} \{ \beta+{\varepsilon}^{-1}\E_{\Pr}[ \tilde{\bm{X}}-\beta ]_+\}\leq 0 \Leftrightarrow \min_{\beta\leq 0} \{ \beta+{\varepsilon}^{-1}\E_{\Pr}[ \tilde{\bm{X}}-\beta ]_+\}\leq 0.$$
\rev{Applying the above arguments to CCP \eqref{eq_ccp_finite}, one can then obtain a CVaR (inner) approximation \citep{nemirovski2007convex,rockafellar2000optimization} of CCP \eqref{eq_ccp_finite}}:
\begin{align}
\label{ccp_cvar_eq_1}
v^{\CVaR}=\min_{\bm x\in \X} \left\{ \bm{c}^\top\bm{x}\colon \min_{\beta\leq 0}\left\{\beta+\frac{1}{\varepsilon} \sum_{i\in[N]}p_i\left[\{ g(\bm x,\rxi^i)-\beta\}_+\right]\right\}\leq 0 \right\}.
\end{align}
Introducing nonnegative auxiliary variables $\bm s$, the CVaR approximation \eqref{ccp_cvar_eq_1} can be reformulated as
\begin{align}
\label{ccp_cvar_eq_2}
v^{\CVaR}=\min_{\bm x\in \X,\beta\leq 0,\bm s\geq \bm 0} \left\{ \bm{c}^\top\bm{x}\colon \varepsilon\beta+ \sum_{i\in[N]}p_i s_i\leq 0, s_i+\beta\geq g(\bm x,\rxi^i),~i\in[N] \right\}.
\end{align}
\rev{As discussed in Section \ref{sec:motivation},} for each constraint $g(\bm x,\rxi^i)$ with $i\in[N]$, \rev{we introduce a corresponding scaling variable $\alpha_i$ for scenario-wise scaling}.
For fixed scaling factors $\bm\alpha\geq \bm e$, we define an \rev{\emph{$\bm\alpha$-scaled}} CVaR approximation:
\begin{align}
\label{ccp_cvar_eq_scale_a}
v^{\CVaR}(\bm\alpha):=\min_{\bm x\in \X,\beta\leq 0,\bm s\geq \bm 0} \left\{ \bm{c}^\top\bm{x}\colon \varepsilon\beta+ \sum_{i\in[N]}p_i s_i\leq 0, s_i+\beta\geq \alpha_i g(\bm x,\rxi^i),~i\in[N] \right\},
\end{align}
and we denote the \rev{\emph{optimally scaled}} CVaR approximation by \begin{equation}
\label{ccp_cvar_eq_scale}
\begin{aligned}
v^{\CVaR}_S:=&\inf_{\bm\alpha\geq\bm e}v^{\CVaR}(\bm\alpha)\\
=&\inf_{\bm x\in \X,\beta\leq 0,\bm s\geq \bm 0,\bm\alpha\geq\bm e} \left\{ \bm{c}^\top\bm{x}\colon \varepsilon\beta+ \sum_{i\in[N]}p_i s_i\leq 0, s_i+\beta\geq \alpha_i g(\bm x,\rxi^i),~i\in[N] \right\}.
\end{aligned}    
\end{equation}
\rev{To the best of our knowledge, the scaled CVaR approximation formulation of the form~\eqref{ccp_cvar_eq_scale_a} is introduced in the literature for the first time. While \cite{chen2010cvar} refers to a notion of ``scaling” within the CVaR approximation, it focuses on scaling each individual constraint in the joint chance constrained setting, rather than individual scenarios. 
} In our setting, if we set $\alpha_i=1$ for each $i\in[N]$ in \eqref{ccp_cvar_eq_scale_a}, we recover the original CVaR approximation \eqref{ccp_cvar_eq_1}, \rev{i.e., $v^{\CVaR}(\bm e)=v^{\CVaR}$}. 
\rev{We note that \eqref{ccp_cvar_eq_scale_a} is precisely the CVaR approximation of a scaled version of CCP~\eqref{eq_ccp_finite}, i.e., the CVaR approximation of $v^* =\min_{\bm x \in \X } \{ \bm{c}^\top\bm x\colon \sum_{i\in[N]}p_i\I\left[ \alpha_i g(\bm x,\rxi^i)\leq 0\right] \geq1-\varepsilon\}$. It then follows that \eqref{ccp_cvar_eq_scale} is a conservative approximation of the CCP \eqref{eq_ccp_finite}, which implies the following result.}

\begin{proposition}
\label{prop_cvar_scvar}
We have \rev{$v^*\leq v^{\CVaR}(\bm\alpha)$ for all $\bm\alpha\geq \bm e$}, and therefore $v^*\leq v^{\CVaR}_S \leq v^{\CVaR}$.
\end{proposition}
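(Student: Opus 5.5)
To prove the first claim, $v^*\le v^{\CVaR}(\bm\alpha)$ for every fixed $\bm\alpha\ge\bm e$, I would argue directly from the constraint system in \eqref{ccp_cvar_eq_scale_a}: every $\bm x$ feasible there is also feasible for \eqref{eq_ccp_finite_1}. Fix a feasible triple $(\bm x,\beta,\bm s)$ with $\beta\le 0$, $\bm s\ge\bm 0$, $\varepsilon\beta+\sum_i p_is_i\le 0$ and $s_i+\beta\ge \alpha_i g(\bm x,\rxi^i)$.

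First I would dispose of the case $\beta=0$. Then $\sum_ip_is_i\le 0$ together with $\bm s\ge\bm 0$ forces $s_i=0$ whenever $p_i>0$. Hence $\alpha_i g(\bm x,\rxi^i)\le 0$, and since $\alpha_i>0$ we get $g(\bm x,\rxi^i)\le 0$ for every scenario with positive probability. So the violation probability is $0\le\varepsilon$.

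If $\beta<0$, I would use the pointwise bound $\I[t>0]\le (t-\beta)_+/(-\beta)$, which holds for every real $t$. Apply it with $t=\alpha_i g(\bm x,\rxi^i)$ and use $\I[\alpha_i g>0]=\I[g>0]$, which is the scaling invariance noted in Section~\ref{sec:motivation}. Since $s_i\ge (\alpha_i g(\bm x,\rxi^i)-\beta)_+$, this gives
\begin{align*}
\bar p(\bm x)=\sum_{i\in[N]}p_i\,\I\!\left[\alpha_i g(\bm x,\rxi^i)>0\right]\le \frac{1}{-\beta}\sum_{i\in[N]}p_i s_i\le \frac{-\varepsilon\beta}{-\beta}=\varepsilon .
\end{align*}
Thus $\bm x$ is feasible for \eqref{eq_ccp_finite_1} with the same objective value $\bm c^\top\bm x$. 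Taking the minimum over feasible points gives $v^*\le v^{\CVaR}(\bm\alpha)$; if \eqref{ccp_cvar_eq_scale_a} is infeasible, its value is $+\infty$ and the inequality holds trivially.

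For the chain $v^*\le v^{\CVaR}_S\le v^{\CVaR}$, I would note two facts. First, $v^*$ is a lower bound for every $v^{\CVaR}(\bm\alpha)$ with $\bm\alpha\ge\bm e$, so it is also a lower bound for their infimum $v^{\CVaR}_S$. Second, $\bm\alpha=\bm e$ is admissible in that infimum, so $v^{\CVaR}_S\le v^{\CVaR}(\bm e)=v^{\CVaR}$.

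No step is a real obstacle. The only point needing care is the degenerate case $\beta=0$, where the Markov-type bound cannot be used. There one must rely on positivity of $\alpha_i$, and on the convention that scenarios with $p_i=0$ do not contribute to $\bar p(\bm x)$.
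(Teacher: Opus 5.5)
Your proof is correct and follows essentially the same route as the paper. The paper observes that \eqref{ccp_cvar_eq_scale_a} is exactly the CVaR approximation of the rescaled CCP with scenario constraints $\alpha_i g(\bm x,\rxi^i)\le 0$, which has the same feasible set as \eqref{eq_ccp_finite}; it then invokes the known conservativeness of CVaR and obtains the chain by taking the infimum over $\bm\alpha$ and using $\bm\alpha=\bm e$. The only difference is that you prove that conservativeness step directly, via $\I[t>0]\le (t-\beta)_+/(-\beta)$ for $\beta<0$ plus the separate $\beta=0$ case, so your argument is self-contained rather than cited.
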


Note that if $v^*= v^{\CVaR}_S$ and there exists $\hat{\bm\alpha}\geq\bm e$ such that $v^{\CVaR}_S=v^{\CVaR}(\hat{\bm\alpha})$, then any solution of \eqref{ccp_cvar_eq_scale_a} with $\bm\alpha=\hat{\bm\alpha}$ is an optimal solution of the original CCP \eqref{eq_ccp_finite} due to the conservativeness of scaled CVaR approximations.
We next identify sufficient conditions such that the optimally scaled CVaR approximation \eqref{ccp_cvar_eq_scale} provides an exact solution to CCP \eqref{eq_ccp_finite}.

\begin{theorem}[Exact case]
\label{theorem_general_case}
Let $\bm x^*$ denote an optimal solution of CCP \eqref{eq_ccp_finite} and define \rev{$I^*:=\{i\in[N]:g(\bm x^*,\rxi^i)< 0\}$}. Suppose that 
$\sum_{i\in I^*}p_i >1-\varepsilon$. 
Then, we have $v^*=v_S^{\CVaR}$, and there exists $(\hat {\bm \alpha},\hat{\beta}, \hat{\bm s})$ such that $v^{\CVaR}_S=v^{\CVaR}(\hat{\bm\alpha})$ and $(\bm x^*,\hat{\beta},\hat{\bm s})$ minimizes the scaled CVaR approximation \eqref{ccp_cvar_eq_scale}. 
\end{theorem}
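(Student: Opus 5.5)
By Proposition~\ref{prop_cvar_scvar} we already have $v^*\le v_S^{\CVaR}\le v^{\CVaR}(\bm\alpha)$ for every $\bm\alpha\ge\bm e$. It therefore suffices to construct a single $\hat{\bm\alpha}\ge \bm e$ together with $\hat\beta\le 0$ and $\hat{\bm s}\ge \bm 0$ such that $(\bm x^*,\hat\beta,\hat{\bm s})$ is feasible for \eqref{ccp_cvar_eq_scale_a} with $\bm\alpha=\hat{\bm\alpha}$. Then
\[
v^{\CVaR}(\hat{\bm\alpha})\le \bm c^\top\bm x^*=v^*\le v_S^{\CVaR}\le v^{\CVaR}(\hat{\bm\alpha}),
\]
so all these quantities coincide, the infimum in \eqref{ccp_cvar_eq_scale} is attained at $\hat{\bm\alpha}$, and $(\bm x^*,\hat\beta,\hat{\bm s})$ (with $\hat{\bm\alpha}$) minimizes \eqref{ccp_cvar_eq_scale}.

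For the construction, split the scenarios into $I^*$, where $g(\bm x^*,\rxi^i)<0$, and its complement $[N]\setminus I^*$, where $g(\bm x^*,\rxi^i)\ge 0$. Outside $I^*$ set $\hat\alpha_i=1$ and $\hat s_i=(g(\bm x^*,\rxi^i)-\hat\beta)_+$. Inside $I^*$ we want $\hat s_i=0$, which requires $\hat\beta\ge \hat\alpha_i g(\bm x^*,\rxi^i)$. Since $g(\bm x^*,\rxi^i)<0$ strictly on $I^*$, we can choose $\hat\alpha_i=\max\{1,\ \hat\beta/g(\bm x^*,\rxi^i)\}$, which gives $\hat\alpha_i g(\bm x^*,\rxi^i)\le \hat\beta$. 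The strict inequality in the definition of $I^*$ is used exactly here: a scaling factor can push a strictly negative value arbitrarily far down, but not a zero value.

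It remains to pick $\hat\beta<0$ so that the budget constraint holds:
\[
\varepsilon\hat\beta+\sum_{i\notin I^*}p_i\bigl(g(\bm x^*,\rxi^i)-\hat\beta\bigr)_+\le 0 .
\]
Let $q:=\sum_{i\notin I^*}p_i<\varepsilon$, which holds by the hypothesis $\sum_{i\in I^*}p_i>1-\varepsilon$, and let $G:=\sum_{i\notin I^*}p_i g(\bm x^*,\rxi^i)\ge 0$. For $\hat\beta<0$ every term $g(\bm x^*,\rxi^i)-\hat\beta$ with $i\notin I^*$ is positive, so the left-hand side equals $(\varepsilon-q)\hat\beta+G$. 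Taking $\hat\beta=-G/(\varepsilon-q)$, or any more negative value (and any negative value if $G=0$), makes it nonpositive. This step is where the strict hypothesis $\sum_{i\in I^*}p_i>1-\varepsilon$ is essential, since it guarantees $\varepsilon-q>0$.

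The main obstacle is the coupling between the choices: $\hat\alpha_i$ on $I^*$ depends on $\hat\beta$, and $\hat\beta$ must be fixed first. Choosing $\hat\beta$ first from the budget inequality and only then defining $\hat{\bm\alpha}$ removes this circularity. The remaining checks, namely $\hat{\bm\alpha}\ge\bm e$, $\hat{\bm s}\ge\bm 0$ and $\hat s_i+\hat\beta\ge\hat\alpha_i g(\bm x^*,\rxi^i)$ for all $i\in[N]$, are routine.
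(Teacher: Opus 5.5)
Your proposal is correct and follows essentially the same argument as the paper. The paper also fixes $\hat\beta=-\bar\alpha$ with $\bar\alpha=\sum_{i\notin I^*}p_i g(\bm x^*,\rxi^i)/(\varepsilon-\tau)$ (your $G/(\varepsilon-q)$), sets $\hat\alpha_i=\max\{-\bar\alpha/g(\bm x^*,\rxi^i),1\}$ on $I^*$ and $1$ elsewhere, takes $\hat s_i=0$ on $I^*$ and $g(\bm x^*,\rxi^i)+\bar\alpha$ elsewhere, and then closes the argument with Proposition~\ref{prop_cvar_scvar} exactly as you do.
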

\begin{proof}
\rev{Due to Proposition \ref{prop_cvar_scvar}, we only need to show that $v_S^{\CVaR} \leq v^*$.} Under the \rev{conditions} that $g(\bm x^*,\rxi^i)<0$ for all $i\in I^*$ and $\sum_{i\in I^*}p_i >1-\varepsilon$, we use $\bm x^*$ to construct a feasible solution to the scaled CVaR approximation \eqref{ccp_cvar_eq_scale}. Define $\tau:=\sum_{i\in [N]\setminus I^*}p_i\in[0,\varepsilon)$ and $ \bar{\alpha} := {\sum_{i\in [N]\setminus  I^*} p_ig(\bm x^*,\rxi^i)}/{(\varepsilon - \tau)} $. 
The construction is as follows:
\begin{align*}
& \hat{\alpha}_i = \max\left\{- \frac{\bar{\alpha}}{g(\bm x^*,\rxi^i)}, 1\right\},~i\in I^*;~\hat{\alpha}_i=1, ~i\in [N]\setminus I^*;\\
& \hat{\beta} = -\bar{\alpha};\\
& \hat{s}_i = 0,~i\in I^*;~\hat{s}_i = g(\bm x^*,\rxi^i) +\bar{\alpha},~i\in [N]\setminus I^*.
\end{align*}
Then it is easy to verify that $\hat{s}_i+\hat{\beta}\geq \hat{\alpha}_i g(\bm x^*,\rxi^i)$ for each $i\in[N]$ and 
\begin{align*}
\varepsilon\hat{\beta}+ \sum_{i\in[N]}p_i \hat{s}_i = -    \varepsilon \bar{\alpha} + \sum_{i\in [N]\setminus I^*}p_i [g(\bm x^*,\rxi^i) +\bar{\alpha}] = -(\varepsilon-\tau) \bar{\alpha} + \sum_{i\in [N]\setminus I^*} p_ig(\bm x^*,\rxi^i) \leq 0.
\end{align*}
Therefore, $(\bm x^*,\hat{\beta},\hat{\bm s},\hat{\bm\alpha})$ satisfies all constraints in the scaled CVaR approximation \eqref{ccp_cvar_eq_scale}, which implies $v^{\CVaR}(\hat{\bm\alpha})\leq \bm{c}^\top\bm{x}^*=v^*$. This completes the proof.
\QEDA
\end{proof}

We make the following remarks about \Cref{theorem_general_case}:
\begin{enumerate}[label=(\roman*)]
\item Example~\ref{example_discrete_exact} in Appendix~\ref{appendix_sec_example} illustrates the correctness of Theorem~\ref{theorem_general_case}. In fact, \rev{the two conditions} of \Cref{theorem_general_case} (i.e., $g(\bm x^*,\rxi^i)<0$ for all $i\in I^*$ and $\sum_{i\in I^*}p_i >1-\varepsilon$) are \rev{essentially tight for guaranteeing} the equivalence between the optimal objective values of the scaled CVaR approximation and its exact CCP counterpart. If either of these two \rev{conditions} in \Cref{theorem_general_case} is not satisfied, Example~\ref{example_discrete_not_exact_condition_1} and Example~\ref{example_discrete_not_exact_condition_2} in Appendix~\ref{appendix_sec_example} illustrate that the scaled CVaR approximation \eqref{ccp_cvar_eq_scale} may not provide an optimal solution;  
\item \Cref{theorem_general_case} extends to distributionally robust chance constrained programs under the type-$\infty$ Wasserstein ambiguity set, since the corresponding formulations are equivalent to CCPs of the form \eqref{eq_ccp_finite} (see, e.g., Proposition 8 in \citealt{jiang2022also}). \rev{The extension of Theorem~\ref{theorem_general_case} to type-$\infty$ Wasserstein distributionally robust chance constrained programs also suggests a conceptual connection to adversarial robustness. In particular, type-$\infty$ Wasserstein distributionally robust optimization has been shown to be closely related to adversarial training formulations in machine learning (see, e.g., \citealp{sinha2018certifiable}). Exploring whether the scaling ideas developed in this paper can be adapted to adversarial training remains an interesting direction for future research};
\item The \rev{condition} $g(\bm x^*,\rxi^i)<0$ for all $i\in I^*$ can be viewed as a special case of the setting in \cite{hanasusanto2017ambiguous}, where the authors studied a strict chance constraint that requires
strict satisfaction of the constraint $g(\bm x,\rxi^i)<0$ for all $i\in [N]$. Nevertheless, in the convex case, we will relax this \rev{condition}. In the purely discrete case, we can always slightly perturb the constraint such that, for all $x\in \X$, $g(\bm x,\rxi^i)\leq 0$ if and only if $g(\bm x,\rxi^i)<0$;
\item We remark that the second \rev{condition} is not restrictive at all. For example, in chance constraints with equiprobable scenarios (i.e., $p_i=1/N$ for each $i\in [N]$), \rev{the condition $\sum_{i\in I^*}p_i >1-\varepsilon$} can accommodate cases where $N\varepsilon$ is not an integer. Otherwise, since the distribution is finite, one can always perturb $\varepsilon$ to satisfy the \rev{condition $\sum_{i\in I^*}p_i >1-\varepsilon$ if $g(\bm x^*,\rxi^i)<0$ for all $i\in I^*$}. 
\end{enumerate}

\rev{To provide further insight into Theorem~\ref{theorem_general_case}, we consider the following example.} 
\begin{example}
\label{Example_illustration}
\rev{Let us consider the following example with $N=6$, $p_i=1/6$ for each $i\in[N]$, and $\varepsilon=5/12$. For a given optimal solution $\bm x^*$, suppose that the scenarios are ordered such that $ g(\bm x^*,\rxi^{\sigma_1})\leq g(\bm x^*,\rxi^{\sigma_2}) \leq g(\bm x^*,\rxi^{\sigma_3})\leq g(\bm x^*,\rxi^{\sigma_4})<0$ and $0<g(\bm x^*,\rxi^{\sigma_5})\leq g(\bm x^*,\rxi^{\sigma_6})$. By Theorem~\ref{theorem_general_case}, it follows that $ I^*=\{\sigma_1,\sigma_2,\sigma_3,\sigma_4\}$ and $\sum_{i\in I^*}p_i=2/3>1-\varepsilon$. This example satisfies the conditions in Theorem \ref{theorem_general_case}, which implies that $v^*=v_S^{\CVaR}$. As illustrated in Figure~\ref{fig_illustration}, in this example, $\CVaR_{1-\varepsilon}[g(\bm x^*,\rxi)]$ can be expressed as
\begin{align*}
\CVaR_{1-\varepsilon}[g(\bm x^*,\rxi)] =\frac{1}{5/12} \left[\frac{1}{12}g(\bm x^*,\rxi^{\sigma_4}) +  \frac{1}{6}g(\bm x^*,\rxi^{\sigma_5})  +  \frac{1}{6}g(\bm x^*,\rxi^{\sigma_6})\right]. 
\end{align*}
Clearly, $\sum_{i\in I^*}p_i >1-\varepsilon$ with $I^*=\{i\in[N]:g(\bm x^*,\rxi^i)< 0\}$ guarantees the scenario-wise scaled CVaR approximation to recover the optimal objective value (by choosing $\alpha_{\sigma_4}$ large enough). If we instead perturb the risk parameter to $\varepsilon=1/3$, then in this example we obtain $\sum_{i\in I^*}p_i =1-\varepsilon$. In this case, the corresponding CVaR reduces to $ \CVaR_{1-\varepsilon}[g(\bm x^*,\rxi)] =1/2[g(\bm x^*,\rxi^{\sigma_5})+g(\bm x^*,\rxi^{\sigma_6})]>0$. Even if the scenarios $\rxi^{\sigma_5},\rxi^{\sigma_6}$ are scaled with factors $\alpha_{\sigma_5}\geq 1$ and $\alpha_{\sigma_6}\geq 1$, respectively, it is impossible to satisfy
\begin{align*}
\CVaR_{1-\varepsilon}[h_{\bm\alpha}(\bm x^*,\rxi)] =1/2[\alpha_{\sigma_5}g(\bm x^*,\rxi^{\sigma_5})+\alpha_{\sigma_6}g(\bm x^*,\rxi^{\sigma_6})]\leq 0.
\end{align*}
Consequently, the scenario-wise CVaR approximation cannot recover the optimal objective value in this setting with $\sum_{i\in I^*}p_i =1-\varepsilon$.
}
\begin{figure}
\centering
\label{fig_illustration}
\begin{tikzpicture}[
>=Latex,
thick,
every node/.style={font=\large},
glabel/.style={font=\small}
]
\def\dx{2.6}
\coordinate (p1) at (0,0);
\coordinate (p2) at (\dx,0);
\coordinate (p3) at (2*\dx,0);
\coordinate (p4) at (3*\dx,0);
\coordinate (p5) at (4*\dx,0);
\coordinate (p6) at (5*\dx,0);

\draw (-0.9,0) -- (5*\dx+0.9,0);

\newcommand{\opencirc}[1]{%
\draw[fill=white] (#1) circle (0.33);
}

\newcommand{\graycirc}[1]{%
\draw[fill=gray!65] (#1) circle (0.33);
}

\newcommand{\halfcirc}[1]{%
\draw[fill=white] (#1) circle (0.33);
\begin{scope}
\clip (#1) circle (0.33);
\path[fill=gray!65] ($(#1)+(0,-0.40)$) rectangle ($(#1)+(0.40,0.40)$);
\end{scope}
\draw (#1) circle (0.33);
}

\opencirc{p1}
\opencirc{p2}
\opencirc{p3}
\halfcirc{p4}
\graycirc{p5}
\graycirc{p6}

\node at ($(p1)+(0,0.85)$) {$\frac{1}{6}$};
\node at ($(p2)+(0,0.85)$) {$\frac{1}{6}$};
\node at ($(p3)+(0,0.85)$) {$\frac{1}{6}$};
\node at ($(p4)+(0,0.85)$) {$\frac{1}{6}$};
\node at ($(p5)+(0,0.85)$) {$\frac{1}{6}$};
\node at ($(p6)+(0,0.85)$) {$\frac{1}{6}$};

\node[glabel] at ($(p1)+(0,-0.85)$) {$g(\bm x^*,\xi^{\sigma_1})$};
\node[glabel] at ($(p2)+(0,-0.85)$) {$g(\bm x^*,\xi^{\sigma_2})$};
\node[glabel] at ($(p3)+(0,-0.85)$) {$g(\bm x^*,\xi^{\sigma_3})$};
\node[glabel] at ($(p4)+(0,-0.85)$) {$g(\bm x^*,\rxi^{\sigma_4})<0$};
\node[glabel] at ($(p5)+(0,-0.85)$) {$g(\bm x^*,\xi^{\sigma _5})>0$};
\node[glabel] at ($(p6)+(0,-0.85)$) {$g(\bm x^*,\xi^{\sigma _6})$};

\def\yB{-1.5}      
\def\endh{0.22}    
\def\w{0.34}       
\def\d{0.16}       

\coordinate (L) at ($(p4)+(-0.05,\yB)$);
\coordinate (R) at ($(p6)+(0.55,\yB)$);

\coordinate (M) at ($($(p4)!0.55!(p5)$)+(0,\yB)$);

\draw
($(L)+(0,\endh)$) .. controls ($(L)+(0,0)$) and ($(L)+(0.28,0)$) .. ($(L)+(0.28,0)$)
-- ($(M)+(-\w,0)$)
.. controls ($(M)+(-0.10,0)$) and ($(M)+(-0.10,-\d)$) .. ($(M)+(0,-\d)$)
.. controls ($(M)+(0.10,-\d)$) and ($(M)+(0.10,0)$) .. ($(M)+(\w,0)$)
-- ($(R)+(-0.28,0)$)
.. controls ($(R)+(-0.28,0)$) and ($(R)+(0,0)$) .. ($(R)+(0,\endh)$);

\node (cvar) at ($(M)+(0,-0.70)$) {$\CVaR_{1-\varepsilon}[g(\bm x^*,\rxi)]$};
\node at ($(p4)+(1.15,1.20)$) {$\frac{1}{12}$};
\draw[->] ($(p4)+(1.00,1.05)$) -- ($(p4)+(0.10,0.18)$);
\end{tikzpicture}
\caption{\rev{An Illustration of Conditions in Theorem 1 with $N=6$ and $\varepsilon=5/12$, adapted from Figure 2 of \cite{sarykalin2008value}.}}
\end{figure}
\QEDB
\end{example}
\rev{The condition $\sum_{i\in I^*}p_i >1-\varepsilon$}
in Theorem \ref{theorem_general_case} may appear restrictive. However, it can be significantly relaxed when all constraints, including the deterministic ones (i.e., constraints in $\bm x\in \X$) and the \rev{realization of the uncertain} constraint in each scenario (i.e., the function $g(\bm x,\cdot)$), are convex.

\begin{theorem}[Convex case]
\label{theorem_convex_case}
Let $\bm x^*$ denote an optimal solution of CCP \eqref{eq_ccp_finite}, and define $\bar I^*:=\{i\in[N]:g(\bm x^*,\rxi^i)\leq 0\}$ and $\bar \X^*=\{\bm x\in \X:g(\bm x,\rxi^i)\leq 0,~i\in \bar I^*\}$. Then in the convex case (i.e., $g(\cdot,\rxi^i)$ is convex for all $i\in \bar I^*$, and set $\X$ is convex), $v^*=v_S^{\CVaR}$ if \rev{there exists $\bar I\subseteq \bar I^*$ such that} (i) for each $i\in \bar I$, there exists $\bm x^i\in \bar \X^*$ such that $g(\bm x^i,\rxi^i)<0$; and (ii) $\sum_{i\in \bar I}p_i >1-\varepsilon$.
\end{theorem}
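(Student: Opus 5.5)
The plan is to approximate $\bm x^*$ by feasible points that strictly satisfy every scenario in $\bar I$, apply the construction from the proof of Theorem~\ref{theorem_general_case} at those points, and pass to the limit. By Proposition~\ref{prop_cvar_scvar} it suffices to show $v_S^{\CVaR}\le v^*$. Since $v_S^{\CVaR}$ is an infimum, it is enough to find, for every $\delta>0$, some $\bm\alpha\ge\bm e$ and a point feasible in \eqref{ccp_cvar_eq_scale} with objective at most $v^*+\delta$.

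\textbf{Step 1: a strictly feasible anchor.} If $\bar I=\emptyset$ then (ii) fails, so $\bar I\neq\emptyset$. Define $\bar{\bm x}:=\frac{1}{|\bar I|}\sum_{j\in\bar I}\bm x^j$. The set $\bar\X^*$ is convex, as the intersection of the convex set $\X$ with sublevel sets of the convex functions $g(\cdot,\rxi^i)$, $i\in\bar I^*$; hence $\bar{\bm x}\in\bar\X^*$. Fix $i\in\bar I\subseteq\bar I^*$. By convexity,
\[
g(\bar{\bm x},\rxi^i)\le \frac{1}{|\bar I|}\sum_{j\in\bar I}g(\bm x^j,\rxi^i).
\]
Every term on the right is $\le 0$ because $\bm x^j\in\bar\X^*$, and the term $j=i$ is $<0$ by (i). Therefore $g(\bar{\bm x},\rxi^i)<0$ for all $i\in\bar I$.

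\textbf{Step 2: interpolation.} For $\lambda\in(0,1]$ set $\bm x_\lambda:=(1-\lambda)\bm x^*+\lambda\bar{\bm x}$, which lies in $\X$ by convexity. For $i\in\bar I$,
\[
g(\bm x_\lambda,\rxi^i)\le (1-\lambda)g(\bm x^*,\rxi^i)+\lambda g(\bar{\bm x},\rxi^i)<0,
\]
since $g(\bm x^*,\rxi^i)\le 0$. Consequently $I_\lambda:=\{i:g(\bm x_\lambda,\rxi^i)<0\}\supseteq\bar I$ and $\sum_{i\in I_\lambda}p_i>1-\varepsilon$. Scenarios outside $\bar I^*$ may have nonconvex $g$, but this is harmless: only the finite values $g(\bm x_\lambda,\rxi^i)$ enter the next step.

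\textbf{Step 3: reuse the construction of Theorem~\ref{theorem_general_case}.} That construction never uses optimality of the point, only the strict inequalities on a set of probability mass exceeding $1-\varepsilon$. Apply it at $\bm x_\lambda$ with $I_\lambda$ in place of $I^*$. Let $\tau:=\sum_{i\notin I_\lambda}p_i<\varepsilon$. One detail needs care: the scenarios outside $I_\lambda$ may have $g(\bm x_\lambda,\rxi^i)=0$ or even a negative sum, so the original $\bar\alpha$ could be nonpositive. To keep $\hat\beta\le 0$ and $\hat{\bm s}\ge\bm 0$, I would take
\[
\bar\alpha:=\max\Big\{\sum_{i\notin I_\lambda}p_i\,g(\bm x_\lambda,\rxi^i)\big/(\varepsilon-\tau),\;0\Big\},
\]
and set $\hat s_i:=(g(\bm x_\lambda,\rxi^i)+\bar\alpha)_+$ for $i\notin I_\lambda$. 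The remaining choices are as before: $\hat\beta=-\bar\alpha$, and for $i\in I_\lambda$, $\hat s_i=0$ and $\hat\alpha_i=\max\{-\bar\alpha/g(\bm x_\lambda,\rxi^i),1\}$. Checking $\varepsilon\hat\beta+\sum_i p_i\hat s_i\le 0$ then splits into two cases:
\begin{itemize}[label=$\circ$]
\item If $\bar\alpha>0$, each $g(\bm x_\lambda,\rxi^i)+\bar\alpha>0$ for $i\notin I_\lambda$ (as these $g\ge 0$), so the positive parts are inactive and the computation is identical to the proof of Theorem~\ref{theorem_general_case}.
\item If $\bar\alpha=0$, then all $g(\bm x_\lambda,\rxi^i)$ with $i\notin I_\lambda$ equal $0$, so every $\hat s_i=0$ and the sum is $0$.
\end{itemize}

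\textbf{Conclusion.} Steps 1--3 give $v_S^{\CVaR}\le v^{\CVaR}(\hat{\bm\alpha})\le\bm c^\top\bm x_\lambda=(1-\lambda)v^*+\lambda\,\bm c^\top\bar{\bm x}$. Letting $\lambda\downarrow0$ yields $v_S^{\CVaR}\le v^*$.

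The main obstacle is Step 1. The points $\bm x^i$ each make only one scenario strict, so we need a single point strict on all of $\bar I$ simultaneously while keeping all of $\bar I^*$ satisfied. Averaging inside the convex set $\bar\X^*$ resolves this. The unavoidable price is that $\bm x^*$ itself need not be attained, so the result is an equality of values via an infimum, consistent with the statement.
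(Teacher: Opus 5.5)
Your proof is correct and is essentially the paper's argument. The paper uses exactly the point $\bm x(\epsilon)=(1-\epsilon)\bm x^*+(\epsilon/|\bar I|)\sum_{i\in\bar I}\bm x^i$ (your $\bm x_\lambda$ with $\lambda=\epsilon$), reruns the construction of Theorem~\ref{theorem_general_case} at that point, and lets $\epsilon\to 0$. Your extra safeguard on $\bar\alpha$ is harmless but unnecessary: every scenario outside $I_\lambda$ has $g(\bm x_\lambda,\rxi^i)\ge 0$ by definition, so the weighted sum is automatically nonnegative and the ``negative sum'' case cannot occur.
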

\begin{proof}
Similar to the proof of Theorem \ref{theorem_general_case}, we only need to show that $v_S^{\CVaR}\leq v^*$.
For $\epsilon\in(0,1)$, define $\bm x(\epsilon):=(1-\epsilon)\bm x^*+(\epsilon/|\bar I|)\sum_{i\in \bar  I}\bm x^i$. Due to convexity of set $\X$ and $\{g(\cdot,\rxi^i)\}_{i\in \bar I}$, we have $\bm x(\epsilon)\in \bar \X^*$ and $g(\bm x(\epsilon),\rxi^i)< 0,~i\in \bar I$. Then, following the proof of Theorem \ref{theorem_general_case} (replacing $\bm x^*$ by $\bm x(\epsilon)$), we have $v_S^{\CVaR}\leq \bm c^\top \bm x(\epsilon)$ for all $\epsilon\in (0,1)$. Therefore, $v_S^{\CVaR}\leq \lim_{\epsilon\rightarrow 0}\bm c^\top \bm x(\epsilon)=\bm c^\top\bm x^*$.
\QEDA
\end{proof}
We make the following remarks about \Cref{theorem_convex_case}:
\begin{enumerate}[label=(\roman*)]
\item Even though the infimum of \eqref{ccp_cvar_eq_scale} may not be attainable (such as Example \ref{example_motivate_covering} of Appendix~\ref{appendix_sec_example}), the scaled CVaR approximation \eqref{ccp_cvar_eq_scale} admits solutions arbitrarily close to its infimum, i.e., any $\epsilon$-optimal solution of the scaled CVaR approximation \eqref{ccp_cvar_eq_scale} implies an $\epsilon$-optimal solution of CCP \eqref{eq_ccp_finite}; 
\item Interested readers are referred to Example~\ref{example_continuous_exact} of Appendix~\ref{appendix_sec_example} for an illustration of the correctness of Theorem~\ref{theorem_convex_case}. Similarly, when the assumption $\sum_{i\in \bar{I}}p_i >1-\varepsilon$ of \Cref{theorem_convex_case} is not met, the scaled CVaR approximation \eqref{ccp_cvar_eq_scale} may fail to find the optimal solution (see, e.g., Example~\ref{example_continuous_not_exact} in Appendix~\ref{appendix_sec_example}).
\end{enumerate}
\rev{Motivated by Theorem~\ref{theorem_convex_case}, we establish a simple yet practically relevant sufficient condition under which $v^*=v_S^{\CVaR}$ holds without requiring knowledge of the optimal solution. Specifically, this sufficient condition consists of two parts, namely, (i) the existence of a point $\bar{\bm x}\in \X$ such that $g(\bar{\bm x},\rxi^i)<0$ for all $i\in[N]$, and (ii) $\sum_{i\in I}p_i\neq \varepsilon$ for all $I\subseteq[N]$. Note that part (ii) of the sufficient condition is not restrictive following remark (iv) of Theorem 1. Next, we further propose a practical procedure for identifying such a certificate $\bar{\bm x}$. Given a tolerance $\bar{\delta}< 0$ (e.g., $\bar{\delta}=-10^{-5}$), we may compute $\bar{\bm x}$ by solving the following feasibility problem:
\begin{align}
\label{condition_feasibility}
\bar{\bm x} \in \arg\min_{\bm x\in \X} \left\{ 0\colon  g(\bm x,\rxi^i)\leq \bar{\delta},~i\in[N] \right\}.
\end{align}
In a wide range of chance constrained programs, including stochastic lot-sizing problems \citep{ruszczynski2002probabilistic}, portfolio optimization \citep{xie2020bicriteria,chen2024data}, and transportation applications \citep{chen2024data}, it is easy to construct a feasible solution $\bar{\bm x}$ to problem~\eqref{condition_feasibility}.}

According to the equivalence of CCP \eqref{eq_ccp_finite} and the scaled CVaR approximation \eqref{ccp_cvar_eq_scale} in \Cref{theorem_convex_case}, and given that solving CCP \eqref{eq_ccp_finite} is in general NP-hard (see, e.g., \citealt{luedtke2010integer}), solving the scaled CVaR approximation \eqref{ccp_cvar_eq_scale} is also NP-hard. We formally present the NP-hardness result below.

\begin{lemma}[Theorem 1 in \citealt{luedtke2010integer}]\label{lem:ccp_nphard}
The optimal objective value of a chance constrained linear program is NP-hard to compute, even when the problem takes the special form $$\min_{\bm x}\left\{\sum_{i=1}^n x_i:\Pr\{\max_{i\in[n]} (\xi_i-x_i)\leq 0\}\geq 1-\varepsilon,\bm x\in \Re_+^n\right\},$$ 
and $\xi$ has a finite support with $p_1=p_2=\ldots=p_N=1/N$.
\end{lemma}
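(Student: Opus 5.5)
This lemma is Theorem 1 of \cite{luedtke2010integer}, so here we only need to recall why it holds. The plan is a polynomial reduction from a known NP-complete problem to the decision version of the special CCP: given $K$, is there $\bm x\in\Re_+^n$ with $\sum_i x_i\le K$ whose scenario constraints $\bm x\ge \rxi^k$ hold for at least $(1-\varepsilon)N$ of the equiprobable scenarios? The structure of the special form guides the reduction. For a fixed set $S$ of satisfied scenarios, the best choice is the componentwise maximum $x_i=\max_{k\in S}\xi^k_i$. The CCP therefore amounts to choosing $(1-\varepsilon)N$ scenarios so as to minimize $\sum_i \max_{k\in S}\xi_i^k$. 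This is a combinatorial selection problem, similar to a covering or clique problem.

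The natural source problem is CLIQUE, in its regular-graph or balanced-biclique variant, or equivalently a coverage-type problem. Given a graph $G=(V,E)$ and a target $t$:
\begin{enumerate}[label=(\roman*)]
\item Associate a scenario with each vertex $v$, and one coordinate with each vertex and each edge.
\item Let $\xi^v$ be the $0/1$ indicator of the coordinates ``touched'' by $v$, i.e., its own coordinate and its incident edges. Then $\sum_i\max_{k\in S}\xi^k_i$ counts $|S|$ plus the number of edges incident to $S$.
\item For a regular graph of degree $d$, the number of edges incident to $S$ equals $d|S|-|E(S)|$. Minimizing it over sets of fixed size $|S|=N-\lfloor \varepsilon N\rfloor$ is therefore equivalent to maximizing the number of induced edges in $S$.
\item Consequently, there is a selection of cost at most $|S|+d|S|-\binom{|S|}{2}$ iff $G$ contains a clique of that size.
\end{enumerate}
Complementing, i.e., choosing which scenarios to violate, may make the reduction cleaner. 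I would also adjust $N$ by adding dummy scenarios with $\xi=\bm 0$, which are always satisfied for free. This tunes $\varepsilon$ so that the required number of satisfied scenarios equals exactly the clique size $t$ plus the number of dummies.

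The remaining steps are routine. The instance is clearly polynomial in size, the probabilities are uniform, and both directions of the equivalence follow from the componentwise-max observation. The main obstacle is making the cost function exactly encode the combinatorial objective. For that, the regularity trick (CLIQUE remains NP-complete on regular graphs) or a direct reduction from a version of set cover is needed, so that the cost depends only on the number of induced edges. Since the statement is quoted verbatim from \cite{luedtke2010integer}, the paper itself should simply cite that reduction rather than reprove it.
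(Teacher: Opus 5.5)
The paper gives no argument for this lemma beyond citing Theorem~1 of \cite{luedtke2010integer}, so your closing recommendation is exactly what it does.

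Your sketched reduction is correct. On a $d$-regular graph with one scenario per vertex and one coordinate per vertex and per edge, the cheapest $\bm x$ satisfying a scenario set $S$ (its componentwise maximum) costs $|S|+d|S|-|E(S)|$. Satisfying more than the required number of scenarios never helps, so the optimal value is at most $t(d+1)-\binom{t}{2}$ iff $G$ has a $t$-clique.

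It is, however, the transpose of a more economical construction, which as far as I recall is the one used in \cite{luedtke2010integer}. Take one coordinate per vertex, one equiprobable scenario per edge $e=\{u,v\}$ (the $0/1$ vector $\xi^e$ with ones exactly in coordinates $u$ and $v$), and set $\varepsilon=1-\binom{k}{2}/|E|$. The cheapest $\bm x$ satisfying an edge set $S$ is the indicator of the endpoints of $S$. Hence the optimal value is at most $k$ iff some $j\le k$ vertices span at least $\binom{k}{2}$ edges, which forces $j=k$ and a $k$-clique.

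That version works on arbitrary graphs. It therefore needs neither your degree-counting identity nor the extra fact that CLIQUE remains NP-complete on regular graphs. That fact is true (complement a cubic graph, on which maximum independent set is NP-complete), but in your route it is a genuine ingredient you would have to cite or prove.

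Your dummy scenarios are also unnecessary. Since $\varepsilon$ is yours to choose, $\varepsilon=1-t/|V|$ already makes the required count exactly $t$. Moreover, several identical $\bm 0$ dummies would collapse into one support point of probability larger than $1/N$, which sits awkwardly with the equiprobable form of the statement.
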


Combining Theorem \ref{theorem_convex_case} and Lemma \ref{lem:ccp_nphard}, we can show the NP-hardness of the scaled CVaR approximation \eqref{ccp_cvar_eq_scale}.
\begin{theorem}
\label{thm:scaledCVaR_nphard}
The scaled CVaR approximation \rev{\eqref{ccp_cvar_eq_scale}} is NP-hard to solve, even when $\bm c=\bm e$, $\X=\Re_+^n$, $\Xi\subset\Re^n$ and $g(\bm x,\bm \xi)=\max_{i\in[n]}(\xi_i-x_i)$.
\end{theorem}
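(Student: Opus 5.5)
The plan is a polynomial reduction from the NP-hard problem in Lemma~\ref{lem:ccp_nphard}. We show that, for instances of that special form (possibly after a harmless polynomial-time modification), $v^*=v_S^{\CVaR}$ by Theorem~\ref{theorem_convex_case}. Then any algorithm computing $v_S^{\CVaR}$ would compute the optimal value of an NP-hard chance-constrained linear program.

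\textbf{Step 1 (setup).} Take an instance of Lemma~\ref{lem:ccp_nphard}: $\bm c=\bm e$, $\X=\Re_+^n$, $g(\bm x,\bm\xi)=\max_{i\in[n]}(\xi_i-x_i)$, $N$ equiprobable scenarios, and a rational $\varepsilon$. The set $\X$ is convex, and each $g(\cdot,\rxi^i)$ is a maximum of affine functions, hence convex. So we are in the convex case of Theorem~\ref{theorem_convex_case}.

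\textbf{Step 2 (the condition on $\varepsilon$).} Condition (ii) of Theorem~\ref{theorem_convex_case} needs some $\bar I$ with $\sum_{i\in\bar I}p_i>1-\varepsilon$. Since $p_i=1/N$, the chance constraint only depends on $\lfloor N\varepsilon\rfloor$. Replacing $\varepsilon$ by $\varepsilon'=(\lfloor N\varepsilon\rfloor+1/2)/N$ (intersected with $(0,1)$ as needed) leaves the feasible set of the CCP, and hence $v^*$, unchanged. After this change, $N\varepsilon'$ is never an integer. Any optimal $\bm x^*$ then satisfies $|\bar I^*|\ge N(1-\varepsilon')$, and because $N(1-\varepsilon')$ is not an integer, $|\bar I^*|/N>1-\varepsilon'$. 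This modification is polynomial and preserves NP-hardness.

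\textbf{Step 3 (Slater-type points; the main step).} Take $\bar I=\bar I^*$. Then $\bar\X^*=\{\bm x\ge\bm 0: x_k\ge \xi^i_k \text{ for all } k,\ i\in\bar I^*\}$. The point $\bm x^i:=\bm x^*+\bm e$ lies in $\bar\X^*$, because the constraints are of upper-set form. It also satisfies $g(\bm x^i,\rxi^i)=g(\bm x^*,\rxi^i)-1\le -1<0$. This is exactly condition (i). The monotone covering structure is what makes this easy. In general this step is the obstacle, and here it is settled by shifting $\bm x^*$ along $\bm e$.

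\textbf{Step 4 (conclusion).} Theorem~\ref{theorem_convex_case} gives $v_S^{\CVaR}=v^*$ for the modified instance, which has the form stated in Theorem~\ref{thm:scaledCVaR_nphard}. Computing the optimal value of \eqref{ccp_cvar_eq_scale} therefore computes the optimal value of the instance from Lemma~\ref{lem:ccp_nphard}, so solving \eqref{ccp_cvar_eq_scale} is NP-hard. We should also note that the reduction never needs $\bm x^*$ explicitly: the existence arguments in Steps 2--3 hold for every instance, so the transformation is purely syntactic (only $\varepsilon$ is changed).
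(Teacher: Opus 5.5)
Your proof is correct and follows the paper's argument. Like the paper, you reduce from Lemma~\ref{lem:ccp_nphard} via Theorem~\ref{theorem_convex_case}, perturbing $\varepsilon$ to secure condition (ii) and exhibiting strictly feasible points for condition (i). The only differences are cosmetic: the paper uses the single point $\bm\xi^{\max}_+ + \bm e$, which is strictly feasible for every scenario and does not depend on $\bm x^*$, where you use $\bm x^*+\bm e$; and you make the $\varepsilon$-perturbation explicit as $(\lfloor N\varepsilon\rfloor+1/2)/N$ rather than "a small enough quantity."
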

\begin{proof}
We first show that the two assumptions in Theorem \ref{theorem_convex_case} are satisfied by the CCP 
\begin{equation}\label{Luedtke_CCP}
    \min_{\bm x}\left\{\sum_{i=1}^n x_i:\Pr\{\max_{i\in[n]} (\xi_i-x_i)\leq 0\}\geq 1-\varepsilon,\bm x\geq \bm 0\right\}.
\end{equation} 
Without loss of generality, assumption (ii) in Theorem \ref{theorem_convex_case} is satisfied by \eqref{Luedtke_CCP} because one can always add a small enough quantity to $\varepsilon$ so that \rev{the resulting CCP is equivalent to the original CCP \eqref{Luedtke_CCP}} while assumption (ii) in Theorem \ref{theorem_convex_case} is satisfied. Regarding assumption (i) in Theorem \ref{theorem_convex_case}, let $\bm\xi^{\max}_+:=\left\{(\max_{i\in [N]}\xi^i_j)_+\right\}_{j=1}^n\in \Re_+^n$. Note that $g(\bm \xi^{\max}_+ +\bm e,\bm\xi^i)<0$ for all $i\in[N]$. Therefore, assumption (i) in Theorem~\ref{theorem_convex_case} is satisfied. Consequently, Theorem~\ref{theorem_convex_case} implies that the scaled CVaR approximation of the CCP yields the optimal objective value \rev{as \eqref{Luedtke_CCP}}. Moreover, Lemma~\ref{lem:ccp_nphard} implies that the corresponding scaled CVaR approximation is NP-hard to solve.\QEDA
\end{proof}

\section{Solution Approaches}
\label{sec_solution_approach}

As we have shown in the last section, solving the scaled CVaR approximation \eqref{ccp_cvar_eq_scale} is, in general, NP-hard. In this section, we introduce some heuristics for solving the scaled CVaR approximation \eqref{ccp_cvar_eq_scale}. We first develop an efficient heuristic approach to update the scaling factors $\bm\alpha$. We then introduce a sequential convex approximation approach. 

\subsection{An Efficient Heuristic Based on Theorem~\ref{theorem_general_case}}
In contrast to the algorithms proposed in \cite{chen2010cvar,zymler2013distributionally}, our first solution approach does not optimize over $\bm\alpha$ within an optimization problem. Instead, we update  $\bm\alpha$ based on the solution construction described in \Cref{theorem_general_case}. To illustrate, we assume in this section that $\sum_{i\in I}p_i\neq \varepsilon$ for all $I\subseteq[N]$.
In Algorithm~\ref{alg_cvar_scale}, we iteratively update $\bm \alpha$ in the scaled CVaR approximation \eqref{ccp_cvar_eq_scale}. 

\vspace{-1em}
\begin{algorithm}[tp]
\caption{A Heuristic to Solve the Scaled CVaR Approximation \eqref{ccp_cvar_eq_scale}}
\label{alg_cvar_scale}
\begin{algorithmic}[1]
\State \textbf{Input:} Let $k\gets0$ with initial solution $\bm x^{(0)}$, let $\delta_1$ denote the stopping criterion parameter, let $\delta_2$ denote a violation threshold,  and initialize $\Delta \gets \infty$
\While{$\Delta \geq \delta_1$}
\State  Denote $I_k=\{i\in[N]:g(\bm x^{(k)},\rxi^i)< \delta_2\}$ and
$\bar{\alpha}=\frac{1}{\varepsilon-\sum_{i\in[N]\setminus I_k}p_i} \sum_{i\in[N]\setminus I_k} p_i g(\bm x^{(k)},\rxi^i)$. 
Update $ \bm \alpha^{(k+1)}$ as
\begin{align*}
\alpha_i^{(k+1)} = \max\left\{-\frac{ \bar{\alpha}}{g(\bm x^{(k)},\rxi^i)},1, \alpha_i^{(k)} \right\},~i\in I_k;  \alpha_i^{(k+1)}=1,~i\in [N]\setminus I_k
\end{align*}
\State  Solve the scaled CVaR approximation \eqref{ccp_cvar_eq_scale} with $ \bm \alpha^{(k+1)}$, i.e.,
\begin{align*}
& v^{\CVaR}(\bm \alpha^{(k+1)})=\min_{\bm x\in \X,\beta\leq 0,\bm s\geq \bm 0} \left\{ \bm{c}^\top\bm{x}\colon \varepsilon\beta+ \sum_{i\in[N]} p_i s_i\leq 0, s_i+\beta\geq \alpha^{(k+1)}_i g(\bm x,\rxi^i),~  i\in[N] \right\},\\
& (\bm x^{(k+1)},\beta^{(k+1)},\bm s^{(k+1)})\in\arg\min_{\bm x\in \X,\beta\leq 0,\bm s\geq \bm 0} \left\{ \bm{c}^\top\bm{x}\colon \varepsilon\beta+ \sum_{i\in[N]} p_i s_i\leq 0, s_i+\beta\geq \alpha^{(k+1)}_i g(\bm x,\rxi^i),~i\in[N] \right\}
\end{align*}
\State Let	$\Delta\gets\left\lvert \bm c^\top  \bm x^{(k)}-\bm c^\top  \bm x^{(k+1)}\right\rvert $ and $k\gets k+1$
\EndWhile
\State \textbf{Output:} A feasible solution $\bm x^{(k)}$ and its objective $\bar{v}^{\CVaR}_S=\bm c^\top  \bm x^{(k)}$
\end{algorithmic}
\end{algorithm}

\begin{proposition}
\label{proposition_heuristics}
Let $\delta_2=0$ and set the initial solution of \Cref{alg_cvar_scale} as the solution from CVaR approximation \eqref{ccp_cvar_eq_1}. The sequence of objective values $\{ \bm c^\top  \bm x^{(k)} \}_{k\in \Ze_+}$ generated by \Cref{alg_cvar_scale} is monotonically nonincreasing, bounded from below, and hence convergent.
\end{proposition}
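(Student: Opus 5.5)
The approach is to show that at each iteration the previous iterate $\bm x^{(k)}$, together with suitable $(\beta,\bm s)$, is feasible for the scaled CVaR approximation with the updated factors $\bm\alpha^{(k+1)}$. The minimization in step 4 then gives $\bm c^\top\bm x^{(k+1)}=v^{\CVaR}(\bm\alpha^{(k+1)})\le \bm c^\top\bm x^{(k)}$. Boundedness from below follows from Proposition~\ref{prop_cvar_scvar}, since every $\bm x^{(k)}$ is feasible for an $\bm\alpha$-scaled CVaR approximation with $\bm\alpha\ge\bm e$. Hence $\bm c^\top\bm x^{(k)}\ge v^{\CVaR}(\bm\alpha^{(k)})\ge v^*>-\infty$, because the CCP is assumed feasible and attains $v^*$. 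Monotone convergence then finishes the argument.

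\textbf{The key step} mimics the construction in the proof of Theorem~\ref{theorem_general_case}, with $\bm x^{(k)}$ in place of $\bm x^*$ and $I_k=\{i:g(\bm x^{(k)},\rxi^i)<0\}$ (since $\delta_2=0$). Set $\tau_k:=\sum_{i\notin I_k}p_i$. We need $\tau_k<\varepsilon$ so that $\bar\alpha$ is well defined and nonnegative. Since $\bm x^{(k)}$ is feasible for a scaled CVaR approximation, it is feasible for the CCP, which gives $\tau_k\le\varepsilon$. The standing assumption $\sum_{i\in I}p_i\neq\varepsilon$ then yields strict inequality. For $k=0$ this uses the fact that $\bm x^{(0)}$ solves \eqref{ccp_cvar_eq_1}, i.e.\ $\bm\alpha^{(0)}=\bm e$; for $k\ge1$, $\bm x^{(k)}$ solves \eqref{ccp_cvar_eq_scale_a} with $\bm\alpha^{(k)}\ge\bm e$.

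Define $\hat\beta=-\bar\alpha\le 0$, $\hat s_i=0$ for $i\in I_k$, and $\hat s_i=g(\bm x^{(k)},\rxi^i)+\bar\alpha\ge0$ for $i\notin I_k$ (here $g\ge 0$). The constraints are then checked as follows.
\begin{itemize}
\item For $i\in I_k$: we have $\alpha_i^{(k+1)}\ge -\bar\alpha/g(\bm x^{(k)},\rxi^i)$, so $\alpha_i^{(k+1)}g(\bm x^{(k)},\rxi^i)\le-\bar\alpha=\hat s_i+\hat\beta$. Here the negativity of $g$ flips the inequality. Taking the max with $\alpha_i^{(k)}$ only increases $\alpha_i$, which keeps the inequality valid.
\item For $i\notin I_k$: we have $\alpha_i=1$, and the constraint holds with equality.
\item The aggregate constraint satisfies $\varepsilon\hat\beta+\sum_i p_i\hat s_i=-(\varepsilon-\tau_k)\bar\alpha+\sum_{i\notin I_k}p_ig(\bm x^{(k)},\rxi^i)=0$ by the definition of $\bar\alpha$.
\end{itemize}
So $(\bm x^{(k)},\hat\beta,\hat{\bm s})$ is feasible for $v^{\CVaR}(\bm\alpha^{(k+1)})$.

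\textbf{The main obstacle} is the well-definedness issue: guaranteeing $\varepsilon-\tau_k>0$ and that step 4 attains a minimizer. I would handle the first through the CCP-feasibility argument above, using the nondegeneracy assumption $\sum_{i\in I}p_i\ne\varepsilon$ stated at the start of the section. For the second, I would assume, as the algorithm implicitly does, that each scaled CVaR problem is solvable. Its feasibility is exactly what the construction shows, so by induction all iterates exist.
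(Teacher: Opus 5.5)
Your route differs from the paper's, and one step in it is false as written. You claim that feasibility of $\bm x^{(k)}$ for the CCP gives $\tau_k\le\varepsilon$. However, with $\delta_2=0$ the complement $[N]\setminus I_k=\{i: g(\bm x^{(k)},\rxi^i)\ge 0\}$ also contains the scenarios with $g(\bm x^{(k)},\rxi^i)=0$, and these are \emph{satisfied} in the CCP. CCP-feasibility only gives $\sum_{i:\,g(\bm x^{(k)},\rxi^i)>0}p_i\le\varepsilon$, not a bound on $\tau_k$. For example, take $g(x,\rxi)=x$, $\X=\Re_+$, $c=1$, and $x^{(k)}=0$. Then $g(\bm x^{(k)},\rxi^i)=0$ for all $i$ and $\tau_k=1>\varepsilon$, so your requirement $\bar\alpha\ge 0$ cannot be derived this way.

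The repair has to use the CVaR constraint itself, through the optimal pair $(\beta^{(k)},\bm s^{(k)})$. If $\beta^{(k)}<0$, every $i\notin I_k$ has $s_i^{(k)}\ge\alpha_i^{(k)}g(\bm x^{(k)},\rxi^i)-\beta^{(k)}\ge-\beta^{(k)}$. Hence $-\tau_k\beta^{(k)}\le\sum_ip_is_i^{(k)}\le-\varepsilon\beta^{(k)}$, so $\tau_k\le\varepsilon$, and the nondegeneracy assumption makes it strict. If $\beta^{(k)}=0$, the aggregate constraint forces $\bm s^{(k)}=\bm 0$, hence $g(\bm x^{(k)},\rxi^i)\le 0$ for all $i$. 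Then the numerator of $\bar\alpha$ is $0$, so $\bar\alpha=0$ whatever the sign of $\varepsilon-\tau_k$. Your construction then goes through with $\hat\beta=0$ and $\hat{\bm s}=\bm 0$. With this patch your argument is complete.

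The paper avoids the issue entirely. It shows that the \emph{previous} pair $(\beta^{(k)},\bm s^{(k)})$ remains feasible for $\bm\alpha^{(k+1)}$. On $[N]\setminus I_k$ one has $g\ge 0$ and $\alpha_i^{(k+1)}=1\le\alpha_i^{(k)}$. On $I_k$ one has $g<0$ and $\alpha_i^{(k+1)}\ge\alpha_i^{(k)}$, thanks to the $\alpha_i^{(k)}$ term in the max. That argument never uses the value or sign of $\bar\alpha$, so no bound on $\tau_k$ is needed.

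Conversely, your certificate never uses the $\alpha_i^{(k)}$ term in the max, so it would also prove monotonicity for a memoryless update. The price is exactly the sign analysis of $\bar\alpha$ above. Your lower-bound argument via Proposition~\ref{prop_cvar_scvar} matches the paper's.
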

\begin{proof}
At iteration $k+1$ of Step 4 in \Cref{alg_cvar_scale}, the scaled CVaR approximation \eqref{ccp_cvar_eq_scale} we are solving is
\begin{align}
\label{ccp_cvar_eq_scale_alpha}
v^{\CVaR}(\bm \alpha^{(k+1)})=\min_{\substack{\bm x\in \X,\\\beta\leq 0,\bm s\geq \bm 0}} \left\{ \bm{c}^\top\bm{x}\colon \varepsilon\beta+ \sum_{i\in[N]} p_i s_i\leq 0, s_i+\beta\geq \alpha^{(k+1)}_i g(\bm x,\rxi^i),~i\in[N] \right\}.
\end{align}
We only need to show that $\bm x^{(k)}$ is feasible to the scaled CVaR approximation \eqref{ccp_cvar_eq_scale_alpha}. Let $(\bm x^{(k)},\beta^{(k)},\bm s^{(k)})$ be an optimal solution at the $k$-th iteration after solving the scaled CVaR approximation with $\bm\alpha^{(k)}$. Then we have
\begin{align*}
& s^{(k)}_i+\beta^{(k)}\geq \alpha^{(k)}_i g(\bm x^{(k)},\rxi^i) \geq g(\bm x^{(k)},\rxi^i) = \alpha^{(k+1)}_i g(\bm x^{(k)},\rxi^i),~i\in [N]\setminus I_k,\\
& s^{(k)}_i+\beta^{(k)} \geq \alpha^{(k)}_i g(\bm x^{(k)},\rxi^i) \geq \alpha^{(k+1)}_i g(\bm x^{(k)},\rxi^i),~i\in I_k.
\end{align*}
Hence, $\bm x^{(k)}$ is feasible to the scaled CVaR approximation \eqref{ccp_cvar_eq_scale_alpha}. Thus, the sequence of $\{ \bm c^\top  \bm x^{(k)} \}_{k\in \Ze_+}$ is monotonically nonincreasing. Given that the objective function $\bm c^\top \bm x$ is bounded below by $v^*$, the monotonicity of the sequence of $\{ \bm c^\top  \bm x^{(k)} \}_{k\in \Ze_+}$ implies its convergence.
\QEDA
\end{proof}

We make the following remarks about \Cref{proposition_heuristics}:
\begin{enumerate}[label=(\roman*)]
\item  From \Cref{proposition_heuristics},  we can demonstrate that\rev{, using the CVaR solution as an initial solution,} the output of \Cref{alg_cvar_scale} is no worse than the CVaR approximation \eqref{ccp_cvar_eq_1}, i.e., $\bar{v}^{\CVaR}_S\leq {v}^{\CVaR}$. 
When the CVaR approximation \eqref{ccp_cvar_eq_1} is infeasible (see, e.g., Example~\ref{example_discrete_exact} of Appendix~\ref{appendix_sec_example}), other solutions, such as ones generated by the $\alsoxs$ approach proposed by \citet{jiang2025also}, can be used as an initial solution;
\item The scaling coefficient updating procedure proposed in \Cref{alg_cvar_scale} can be easily incorporated into other $\CVaR$-based approximation algorithms. For example, the scaling coefficient updating procedure can be integrated into $\alsoxs$ (see, e.g., \citealt{jiang2025also}) to enhance its performance further. For detailed discussions, we refer interested readers to Appendix~\ref{sec_scaling_alsoxs};
\item At each iteration, we can warm-start the process with the solution found in the previous iteration;
\item In the implementation of our numerical experiments, to avoid an excessively large $\bar{\bm\alpha}$ \rev{(which may lead to numerical issues)}, in Step 3 of \Cref{alg_cvar_scale}, we select $\delta_2=-0.005$. Note that, in such cases, the sequence of objective values $\{ \bm c^\top  \bm x^{(k)} \}_{k\in \mathbb{Z}_+}$ generated may not converge, and we may set a maximum iteration limit. If the maximum iteration limit is reached, the minimum value from the sequence $\{ \bm c^\top  \bm x^{(k)} \}_{k\in \mathbb{Z}_+}$ is reported as the final output.
\end{enumerate}
In the subsequent subsection, we discuss how to use sequential convex approximations to obtain a \rev{stronger} initial solution for \Cref{alg_cvar_scale}.

\subsection{A Sequential Convex Approximation Approach}


\rev{Recall that throughout the paper we define $ g(\bm{x},\bm{\xi})= \max_{j\in[J]} g_j(\bm x,\bm\xi)$, which allows formulation \eqref{eq_ccp_finite} to cover both single ($J=1$) and joint ($J>1$) chance constrained programs. Using this definition,} we can equivalently write the \rev{optimally} scaled CVaR approximation \eqref{ccp_cvar_eq_scale} as
\begin{align*}
v^{\CVaR}_S=\inf_{\substack{\bm x\in \X,\beta\leq 0,\\ \bm s\geq \bm 0,\bm\alpha\geq \bm e}} \left\{ \bm{c}^\top\bm{x}\colon \varepsilon\beta+ \sum_{i\in[N]}p_i s_i\leq 0, s_i+\beta\geq \alpha_i g_j(\bm x,\rxi^i),~i\in[N],j\in[J] \right\}.
\end{align*}
The product terms $\{\alpha_i g_j(\bm x,\rxi^i)\}_{i\in[N],j\in[J]}$ bring difficulty to the solution of the scaled CVaR approximation. One way to address it is to use the well-known difference-of-convex (DC) approach (see, e.g., Section 2 of \citealt{tao1997convex}). Using the fact that $xy=1/4[(x+y)^2-(x-y)^2]$, we can reformulate the scaled CVaR approximation \eqref{ccp_cvar_eq_scale} as
\begin{align*}
v^{\CVaR}_S=\inf_{\substack{\bm x\in \X,\beta\leq 0, \\ \bm s\geq \bm 0,\bm\alpha\geq \bm e}} \left\{ \bm{c}^\top\bm{x}\colon \begin{array}{l}
\displaystyle \varepsilon\beta+ \sum_{i\in[N]}p_i s_i\leq 0, \\
\displaystyle 4s_i+4\beta\geq [\alpha_i +g_j(\bm x,\rxi^i)]^2-[\alpha_i -g_j(\bm x,\rxi^i)]^2,~i\in[N],j\in[J]
\end{array}\right\}.
\end{align*}
This reformulation allows us to apply sequential convex conservative approximations. To ensure the validity of the sequential convex approximations, throughout this subsection, we assume that the function $ g_j(\bm{x},\rxi)$ is affine in $\bm x$ for each $j\in[J]$, i.e., $ g_j(\bm{x},\rxi^i) = {\bm a}_j(\xi^i)^\top \bm x- b_j(\xi^i)$  with ${\bm a}_j:\Xi\rightarrow \Re^n$, and $b_j:\Xi\rightarrow\Re$. \rev{This affine structure arises in a broad class of CCP models and has been widely used in the CCP literature (see, e.g., \citealt{qiu2014covering,ahmed2018relaxations,xie2020bicriteria,xie2021distributionally,ho2022distributionally,ho2023strong,chen2024data}).}

In the sequential convex approximation, at iteration $k+1$, for each $i\in[N],j\in[J]$, we replace $[\alpha_i -g_j(\bm x,\rxi^i)]^2$ by its first-order Taylor approximation using the solution from the previous iteration $k$, that is,
\begin{align*}
[\alpha_i -g_j(\bm x,\rxi^i)]^2 & \geq   [\alpha^{(k)}_i -g_j(\bm x^{(k)},\rxi^i)]^2 + 2[\alpha^{(k)}_i -g_j(\bm x^{(k)},\rxi^i)](\alpha_i-\alpha^{(k)}_i) \\
& \quad + 2[\alpha^{(k)}_i -g_j(\bm x^{(k)},\rxi^i)]
{\nabla_{\bm x^{(k)}} g_j(\bm x,\rxi^i)}^\top[\bm x-\bm x^{(k)}],
\end{align*}
where ${\nabla_{\bm x} g_j(\bm x,\rxi^i)}$ denotes the derivative of $g_j(\bm x,\rxi^i)$ for $\bm x^{(k)}$. 
Then, we solve the following program:
\begin{equation}\label{scale_cvar_dc}
\begin{aligned}
(\bm x^{(k+1)},\bm \alpha^{(k+1)}) \in 
& \proj_{(\bm{x},\bm{\alpha})} 
\arg\min_{\substack{\bm x\in \X, \, \beta\leq 0, \\ \bm s\geq \bm 0, \, \bm\alpha\geq \bm e}}~ \bm{c}^\top\bm{x}, \\
\textup{s.t.} \quad 
& \varepsilon\beta + \sum_{i\in[N]} p_i s_i \leq 0, \\
& [\alpha_i + g_j(\bm x,\rxi^i)]^2 \leq 4[s_i+\beta] 
+ [\alpha^{(k)}_i - g_j(\bm x^{(k)},\rxi^i)]^2 \\
& \quad 
+ 2[\alpha^{(k)}_i - g_j(\bm x^{(k)},\rxi^i)](\alpha_i-\alpha^{(k)}_i) \\
& \quad 
+ 2[\alpha^{(k)}_i - g_j(\bm x^{(k)},\rxi^i)] 
{\nabla_{\bm x^{(k)}} g_j(\bm x,\rxi^i)}^\top[\bm x-\bm x^{(k)}], ~i\in[N],j\in[J].
\end{aligned}
\end{equation}
Problem \eqref{scale_cvar_dc} serves as a convex approximation of scaled CVaR approximation \eqref{ccp_cvar_eq_scale}\rev{, which is iteratively solved in a sequential convex approximation algorithm as summarized in \Cref{alg_cvar_scale_dc}}. 
\begin{algorithm}[tp!]
\caption{A Sequential Convex Approximation Algorithm to Solve the Scaled CVaR Approximation \eqref{ccp_cvar_eq_scale}}
\label{alg_cvar_scale_dc}
\begin{algorithmic}[1]
\State \textbf{Input:} Let $k\gets 0$ with initial solution $\bm x^{(0)}$, let $\delta_1$ denote the stopping tolerance parameter, 
and initialize $\Delta \gets \infty$
\While{$\Delta \geq \delta_1$}
\State  Solve Problem \eqref{scale_cvar_dc} to obtain $\bm x^{(k+1)}$
\State Let	$\Delta\gets\left\lvert \bm c^\top  \bm x^{(k)}-\bm c^\top  \bm x^{(k+1)}\right\rvert $ and $k\gets k+1$
\EndWhile
\State \textbf{Output:} A feasible solution $\bm x^{(k)}$ and its objective value $\bar{v}^{\CVaR}_S(DC)=\bm c^\top\bm x^{(k)}$
\end{algorithmic}
\end{algorithm}

Algorithm \ref{alg_cvar_scale_dc} is similar to the sequential convex approximation (SCA) algorithm proposed by \cite{hong2011sequential}. Based on the property 1 and property 2 in \cite{hong2011sequential}, by initializing \Cref{alg_cvar_scale_dc}
with the solution obtained from the CVaR approximation \eqref{ccp_cvar_eq_1}, the sequence of objective values $\{ \bm c^\top  \bm x^{(k)} \}_{k\in \Ze_+}$ generated by \Cref{alg_cvar_scale_dc} is monotonically nonincreasing, bounded from below, and therefore convergent.
Hence, we can show that the output of \Cref{alg_cvar_scale_dc} improves upon the CVaR approximation \eqref{ccp_cvar_eq_1}, i.e., $\bar{v}^{\CVaR}_S(DC)\leq {v}^{\CVaR}$, \rev{if Algorithm \ref{alg_cvar_scale_dc} is initialized using the CVaR solution}. These properties of  \Cref{alg_cvar_scale_dc} are formally summarized below.

\begin{corollary}
\label{proposition_sequential}
Suppose we set the initial solution of \Cref{alg_cvar_scale_dc} as the solution from CVaR approximation \eqref{ccp_cvar_eq_1}, then the sequence of objective values $\{ \bm c^\top  \bm x^{(k)} \}_{k\in \Ze_+}$ generated by \Cref{alg_cvar_scale_dc} is monotonically nonincreasing, bounded from below, and hence convergent, and the output of \Cref{alg_cvar_scale_dc} is better than CVaR approximation \eqref{ccp_cvar_eq_1}, i.e., $\bar{v}^{\CVaR}_S(DC)\leq {v}^{\CVaR}$. 
\end{corollary}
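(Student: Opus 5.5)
The argument has three parts. First, every iterate $\bm x^{(k+1)}$ is feasible for the scaled CVaR approximation with $\bm\alpha=\bm\alpha^{(k+1)}$, which gives the lower bound. Second, the previous iterate remains feasible for the next convex subproblem \eqref{scale_cvar_dc}, which gives monotonicity. Third, the CVaR solution starts the chain.

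\textbf{Step 1 (conservativeness of the linearized subproblem).} Since each $g_j(\cdot,\rxi^i)$ is affine, the map $(\bm x,\alpha_i)\mapsto[\alpha_i-g_j(\bm x,\rxi^i)]^2$ is convex. Its first-order Taylor expansion at $(\bm x^{(k)},\alpha^{(k)}_i)$ is therefore a global underestimator, which is exactly the displayed inequality preceding \eqref{scale_cvar_dc}. Take any $(\bm x,\beta,\bm s,\bm\alpha)$ feasible for \eqref{scale_cvar_dc}. Then
$[\alpha_i+g_j]^2\le 4(s_i+\beta)+(\text{linearization})\le 4(s_i+\beta)+[\alpha_i-g_j]^2$.
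Using $xy=\tfrac14[(x+y)^2-(x-y)^2]$, this says $s_i+\beta\ge\alpha_i g_j(\bm x,\rxi^i)$ for all $i,j$. Hence $(\bm x,\beta,\bm s)$ is feasible for \eqref{ccp_cvar_eq_scale_a} with $\bm\alpha\ge\bm e$. By Proposition~\ref{prop_cvar_scvar}, $\bm c^\top\bm x^{(k+1)}\ge v^{\CVaR}(\bm\alpha^{(k+1)})\ge v^*$, so the whole sequence is bounded below by $v^*$.

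\textbf{Step 2 (monotonicity).} Let $(\bm x^{(k)},\beta^{(k)},\bm s^{(k)},\bm\alpha^{(k)})$ be the full solution at iteration $k$. Plugging it into the subproblem built at $(\bm x^{(k)},\bm\alpha^{(k)})$ makes the linear correction terms vanish. The right-hand side then equals $4(s^{(k)}_i+\beta^{(k)})+[\alpha^{(k)}_i-g_j(\bm x^{(k)},\rxi^i)]^2$, and the constraint reduces to $s^{(k)}_i+\beta^{(k)}\ge\alpha^{(k)}_i g_j(\bm x^{(k)},\rxi^i)$. This holds because, by Step 1, the iterate is feasible for the scaled CVaR problem. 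So the previous point is feasible for the new subproblem, and $\bm c^\top\bm x^{(k+1)}\le\bm c^\top\bm x^{(k)}$. This is precisely Properties 1 and 2 of \cite{hong2011sequential}.

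\textbf{Step 3 (initialization and conclusion).} At $k=0$ we take $\bm\alpha^{(0)}=\bm e$ together with the optimal CVaR triple $(\bm x^{(0)},\beta^{(0)},\bm s^{(0)})$ from \eqref{ccp_cvar_eq_2}. It satisfies $s_i+\beta\ge g_j(\bm x^{(0)},\rxi^i)$, so the argument of Step 2 applies and gives $\bm c^\top\bm x^{(1)}\le v^{\CVaR}$. A monotone nonincreasing sequence bounded below converges, and its limit $\bar v^{\CVaR}_S(DC)$ is at most $\bm c^\top\bm x^{(0)}=v^{\CVaR}$.

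The main subtlety is bookkeeping. The algorithm only records $\bm x^{(k)}$ and $\bm\alpha^{(k)}$ via the projection, but monotonicity needs the accompanying $(\beta^{(k)},\bm s^{(k)})$ as well. I would handle this by carrying the full optimal tuple from one iteration to the next. I would also assume each subproblem attains its minimum; this is guaranteed once the previous point shows it is feasible and the objective is bounded below by $v^*$, with attainment assumed as in the SCA framework.
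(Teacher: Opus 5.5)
Your proposal is correct and follows the paper's route. The paper simply invokes Properties~1 and~2 of \cite{hong2011sequential}, which are exactly your Steps~1 and~2: every subproblem solution is feasible for the $\bm\alpha$-scaled CVaR problem, and the previous iterate (with $\bm\alpha^{(0)}=\bm e$ at the start) stays feasible for the next subproblem, so you have written out what the paper delegates to the citation. One correction: the expansion you need is not literally the paper's display, because $\nabla_{\bm x}[\alpha_i-g_j(\bm x,\rxi^i)]^2=-2[\alpha_i-g_j(\bm x,\rxi^i)]\nabla_{\bm x}g_j(\bm x,\rxi^i)$; the $\bm x$-term should therefore carry a minus sign, and with the paper's $+$ (a typo) the underestimation in your Step~1, on which Step~2 relies for $k\ge 1$, would fail.
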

We remark that even though Algorithm \ref{alg_cvar_scale_dc} remains a difference-of-convex algorithm (DCA), it is different from the DCA proposed in \citet{hong2011sequential}, where the latter directly approximates the chance constraint using difference-of-convex functions. 

Different from \Cref{alg_cvar_scale}, which updates $\bm\alpha$ and $\bm x$ separately, \Cref{alg_cvar_scale_dc} updates them simultaneously. However, note that \Cref{alg_cvar_scale_dc} may encounter numerical issues when using commercial solvers because Problem~\eqref{scale_cvar_dc} involves many quadratic constraints. To reduce the number of quadratic constraints in Problem~\eqref{scale_cvar_dc}, we take a hybrid strategy inspired by Algorithm~\ref{alg_cvar_scale}. Specifically, let $I_k=\{i\in[N]:g(\bm x^{(k)},\rxi^i)< 0\}$. Rather than relaxing all product terms $\{\alpha_i g_j(\bm x,\rxi^i)\}_{i\in[N],j\in[J]}$, the key idea is to relax only those terms where $i\in I_k$. For $i\in [N]\setminus I_k$, we simply set $\alpha_i=1$. The details of this hybrid approach are presented in \Cref{alg_cvar_scale_hybrid}.

Moreover, combining elements of \Cref{alg_cvar_scale_dc} and \Cref{alg_cvar_scale} can further enhance the solution quality. The key observation is that, when Algorithm~\ref{alg_cvar_scale_dc} terminates with the prespecified tolerance or encounters numerical difficulties, its output provides a feasible solution $\bm x^{(k)}$ to the scaled CVaR approximation with the scaling vector fixed at its final value $\bm\alpha^{(k)}$. Therefore, after Algorithm~\ref{alg_cvar_scale_dc} terminates, we can fix the scaling vector at the final iterate $\bm\alpha^{(k)}$ and solve Step 4 of Algorithm~\ref{alg_cvar_scale} once more. This step cannot worsen the objective value returned by Algorithm~\ref{alg_cvar_scale_dc}.


\begin{corollary}
\label{proposition_hybrid} 
Let $(\bm x^{(k)},\bm\alpha^{(k)})$  be the output of \Cref{alg_cvar_scale_dc}. Then solving Step 4 of Algorithm~\ref{alg_cvar_scale} with $\bm \alpha = \bm\alpha^{(k)}$ cannot produce an objective value worse than that returned by \Cref{alg_cvar_scale_dc}. In particular, the optimal value of the scaled CVaR approximation with alpha fixed at $\bm \alpha^{(k)}$ is no greater than the objective value produced by \Cref{alg_cvar_scale_dc}.
\end{corollary}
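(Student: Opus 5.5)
The plan is to show that the final output of \Cref{alg_cvar_scale_dc}, together with its auxiliary variables, is a feasible point of the fixed-$\bm\alpha$ problem solved in Step 4 of \Cref{alg_cvar_scale} with $\bm\alpha=\bm\alpha^{(k)}$. Once feasibility is established, the optimal value $v^{\CVaR}(\bm\alpha^{(k)})$ of that problem cannot exceed $\bm c^\top\bm x^{(k)}$, which is exactly the value returned by \Cref{alg_cvar_scale_dc}. Any minimizer of Step 4 then has objective no worse than $\bm c^\top\bm x^{(k)}$.

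The key step is feasibility. Let $(\bm x^{(k)},\beta^{(k)},\bm s^{(k)},\bm\alpha^{(k)})$ be the last iterate, obtained by solving Problem \eqref{scale_cvar_dc} with linearization point $(\bm x^{(k-1)},\bm\alpha^{(k-1)})$. For each $i\in[N]$ and $j\in[J]$, the convex quadratic constraint in \eqref{scale_cvar_dc} reads $[\alpha_i+g_j]^2\le 4(s_i+\beta)+L_{ij}(\bm x,\alpha_i)$. Here $L_{ij}$ is the first-order Taylor expansion of $[\alpha_i-g_j(\bm x,\rxi^i)]^2$ at the previous iterate.

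Since $g_j$ is affine in $\bm x$, the map $(\bm x,\alpha_i)\mapsto \alpha_i-g_j(\bm x,\rxi^i)$ is affine. Its square is therefore convex, and its tangent $L_{ij}$ is a global underestimator: $L_{ij}(\bm x,\alpha_i)\le[\alpha_i-g_j(\bm x,\rxi^i)]^2$. Evaluating at the final iterate gives
\[
[\alpha^{(k)}_i+g_j(\bm x^{(k)},\rxi^i)]^2-[\alpha^{(k)}_i-g_j(\bm x^{(k)},\rxi^i)]^2\le 4(s^{(k)}_i+\beta^{(k)}).
\]
By the identity $xy=\tfrac14[(x+y)^2-(x-y)^2]$, this is exactly $\alpha^{(k)}_i g_j(\bm x^{(k)},\rxi^i)\le s^{(k)}_i+\beta^{(k)}$ for all $j\in[J]$. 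Taking the maximum over $j$ yields $s^{(k)}_i+\beta^{(k)}\ge\alpha^{(k)}_i g(\bm x^{(k)},\rxi^i)$.

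The remaining constraints of the fixed-$\bm\alpha$ problem are $\bm x\in\X$, $\beta\le0$, $\bm s\ge\bm 0$ and $\varepsilon\beta+\sum_i p_is_i\le0$. These appear verbatim in \eqref{scale_cvar_dc}, so they are inherited directly, and $\bm\alpha^{(k)}\ge\bm e$ holds by construction. Hence $(\bm x^{(k)},\beta^{(k)},\bm s^{(k)})$ is feasible for \eqref{ccp_cvar_eq_scale_a} with $\bm\alpha=\bm\alpha^{(k)}$, and $v^{\CVaR}(\bm\alpha^{(k)})\le\bm c^\top\bm x^{(k)}$.

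The only subtle point is whether \Cref{alg_cvar_scale_dc} stopped early because of numerical difficulties. In that case I would take $\bm x^{(k)}$ to be the last iterate actually returned by a solve of \eqref{scale_cvar_dc}, so that the argument above still applies. If the output is the initial CVaR point with $\bm\alpha^{(0)}=\bm e$, feasibility is immediate, because \eqref{ccp_cvar_eq_scale_a} with $\bm\alpha=\bm e$ coincides with \eqref{ccp_cvar_eq_2}.
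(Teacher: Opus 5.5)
Your proposal is correct and follows the same route as the paper. The paper's justification, given in the paragraph preceding the corollary, is simply that the final iterate $\bm x^{(k)}$ of \Cref{alg_cvar_scale_dc} is feasible for the scaled CVaR approximation with $\bm\alpha$ fixed at $\bm\alpha^{(k)}$. You additionally supply the reason: the tangent of the convex function $[\alpha_i-g_j(\bm x,\rxi^i)]^2$ is a global underestimator, so the linearized constraint implies $s_i+\beta\ge\alpha_i g_j(\bm x,\rxi^i)$.

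One remark: this argument needs the genuine first-order expansion, whose $\bm x$-term is $-2[\alpha^{(k)}_i-g_j(\bm x^{(k)},\rxi^i)]\nabla_{\bm x} g_j(\bm x^{(k)},\rxi^i)^\top(\bm x-\bm x^{(k)})$. The paper's displayed linearization carries a $+$ sign there, which is a typo: under that sign the underestimator inequality, and hence your feasibility step, would fail. So your reading of $L_{ij}$ as the true tangent is the right one.
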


Motivated by \Cref{proposition_hybrid}, we can first apply \Cref{alg_cvar_scale_dc} to generate an initial solution, followed by \Cref{alg_cvar_scale} to refine subsequent iterations. Because both algorithms preserve monotonicity in the objective sequence, the final output sequence is consistently nonincreasing. A summary of this hybrid approach is provided in \Cref{alg_cvar_scale_hybrid}, and its detailed implementation is discussed in Section~\ref{sec_numerical_study}.

\begin{algorithm}[tp!]
\caption{A Hybrid Algorithm to Solve the Scaled CVaR Approximation \eqref{ccp_cvar_eq_scale}}
\label{alg_cvar_scale_hybrid}
\begin{algorithmic}[1]
\State \textbf{Input:} Let $k\gets 0$ with initial solution $\bm x^{(0)}$, let $\delta_1$ denote the stopping tolerance parameter, let $\delta_2$ denote a violation threshold, and initialize $\Delta \gets \infty$
\While{$\Delta \geq \delta_1$}
\State   Denote $I_k=\{i\in[N]:g(\bm x^{(k)},\rxi^i)< \delta_2\}$ and $\bar{I}_k=[N]\setminus I_k$
\State Obtain $(\bm x^{(k+1)},\bm \alpha^{(k+1)})$ by solving
\begin{align}
(\bm x^{(k+1)},\bm \alpha^{(k+1)})\in &\proj_{(\bm{x},\bm{\alpha})}\arginf_{\substack{\bm x\in \X,\beta\leq 0,\\\bm s\geq \bm 0,\bm\alpha\geq \bm e}}   \bm{c}^\top\bm{x},\nonumber\\
\textup{s.t.} \quad & \varepsilon\beta+ \sum_{i\in[N]} p_i s_i\leq 0, \nonumber\\
& s_i+\beta\geq g_j(\bm x,\rxi^i),  \alpha_i=1,~i\in \bar{I}_k, j\in[J]\nonumber\\
& [\alpha_i +g_j(\bm x,\rxi^i)]^2 \leq 4[s_i+\beta] + [\alpha^{(k)}_i -g_j(\bm x^{(k)},\rxi^i)]^2 \nonumber \\
& \quad\quad\quad\quad\quad\quad\quad + 2[\alpha^{(k)}_i -g_j(\bm x^{(k)},\rxi^i)](\alpha_i-\alpha^{(k)}_i) \nonumber \\
& \quad\quad\quad\quad\quad\quad\quad + 2[\alpha^{(k)}_i -g_j(\bm x^{(k)},\rxi^i)]
{\nabla_{\bm x^{(k)}}g_j(\bm x,\rxi^i)}^\top[\bm x-\bm x^{(k)}],~i\in I_k,j\in[J]\nonumber
\end{align}
\State Let	$\Delta\gets\left\lvert \bm c^\top  \bm x^{(k)}-\bm c^\top  \bm x^{(k+1)}\right\rvert $ and $k\gets k+1$
\EndWhile
\State Let $(\bm x^{(k+1)},\bm \alpha^{(k+1)})$ be the input of \Cref{alg_cvar_scale}, run \Cref{alg_cvar_scale} until it terminates, and let $\bm x^H$ and its objective value $\bar{v}^{\CVaR}_S(H)$ 
be the output of \Cref{alg_cvar_scale}
\State \textbf{Output:} A feasible solution $\bm x^H$ and its objective value $\bar{v}^{\CVaR}_S(H)=\bm c^\top \bm x^H$
\end{algorithmic}
\end{algorithm}

\subsection{Incorporating Auxiliary Optimality Information}

Based on the solution construction in \Cref{theorem_convex_case} and \Cref{alg_cvar_scale}, we observe that if a solution violates a scenario, the corresponding constraint will not be scaled up. This observation motivates us to incorporate auxiliary optimality information to fix the value of the corresponding $\alpha$ in the scaled CVaR approximation \eqref{ccp_cvar_eq_scale}. We employ a straightforward and effective method to incorporate the optimality information and fix $\bm\alpha$.
\vspace{-1em}
\begin{corollary}
\label{corollary_auxiliary_optimality}
Let $\bm x^*$ denote an optimal solution of CCP \eqref{eq_ccp_finite} and define $I^*:=\{i\in[N]:g(\bm x^*,\rxi^i)\leq 0\}$.
Under the assumptions of Theorem \ref{theorem_general_case} or Theorem \ref{theorem_convex_case}, 
we have
\begin{align*}
v^*=\inf\{v_S^{\CVaR}(\bm\alpha):\alpha_i=1,~i\in [N]\setminus I^*,\alpha_i\geq 1,~i\in I^*\}.
\end{align*} 
\end{corollary}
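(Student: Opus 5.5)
The plan is a sandwich argument. Write $V:=\inf\{v^{\CVaR}(\bm\alpha):\alpha_i=1,~i\in[N]\setminus I^*,~\alpha_i\geq 1,~i\in I^*\}$; here $v_S^{\CVaR}(\bm\alpha)$ in the statement is read as the $\bm\alpha$-scaled value $v^{\CVaR}(\bm\alpha)$ from \eqref{ccp_cvar_eq_scale_a}.

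\textbf{Lower bound.} The feasible set of scaling vectors in $V$ is a subset of $\{\bm\alpha\geq\bm e\}$. Hence $V\geq v_S^{\CVaR}$, and Proposition~\ref{prop_cvar_scvar} gives $v_S^{\CVaR}\geq v^*$. So $V\geq v^*$, and it remains to show $V\leq v^*$.

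\textbf{Upper bound under Theorem~\ref{theorem_general_case}.} I will reuse the explicit construction from its proof. That construction sets $\hat\alpha_i=1$ for every $i\in[N]\setminus\{i:g(\bm x^*,\rxi^i)<0\}$. The complement of the strict set $\{i:g(\bm x^*,\rxi^i)<0\}$ contains $[N]\setminus I^*$, where $I^*$ now denotes the non-strict set $\{i:g(\bm x^*,\rxi^i)\leq 0\}$. Therefore $\hat\alpha_i=1$ on $[N]\setminus I^*$, and $\hat\alpha_i\geq 1$ everywhere by the $\max\{\cdot,1\}$. So $\hat{\bm\alpha}$ is admissible in $V$. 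The proof already shows $(\bm x^*,\hat\beta,\hat{\bm s})$ is feasible for \eqref{ccp_cvar_eq_scale_a} with $\hat{\bm\alpha}$, which gives $V\leq v^{\CVaR}(\hat{\bm\alpha})\leq \bm c^\top\bm x^*=v^*$.

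\textbf{Upper bound under Theorem~\ref{theorem_convex_case}.} Take $\bar I\subseteq\bar I^*=I^*$ and the points $\bm x(\epsilon)$ from that proof, so that $\bm x(\epsilon)\in\bar\X^*$ and $g(\bm x(\epsilon),\rxi^i)<0$ for $i\in\bar I$. Apply the Theorem~\ref{theorem_general_case} construction at $\bm x(\epsilon)$, with the index set $I^*$ of that construction replaced by $\bar I$. Then $\tau=\sum_{i\notin\bar I}p_i<\varepsilon$, the scaling factors equal $1$ outside $\bar I$, and on $\bar I$ we use $\max\{-\bar\alpha/g(\bm x(\epsilon),\rxi^i),1\}\geq 1$. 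Since $\bar I\subseteq I^*$, this scaling vector equals $1$ on $[N]\setminus I^*$, so it is admissible in $V$. The feasibility verification is identical to the one already done, so $V\leq \bm c^\top\bm x(\epsilon)$. Letting $\epsilon\to0$ gives $V\leq\bm c^\top\bm x^*=v^*$.

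\textbf{Main obstacle.} The only delicate point is bookkeeping: in the convex case the index set used in the construction must be $\bar I$ rather than all of $I^*$. This restriction matters because scenarios in $I^*\setminus\bar I$ may have $g(\bm x(\epsilon),\rxi^i)=0$, where the ratio $-\bar\alpha/g$ is undefined. Setting $\alpha_i=1$ on those scenarios is allowed, since they belong to $I^*$ and only the constraint $\alpha_i\geq 1$ applies there. They then fall into the $[N]\setminus\bar I$ group, with $\hat s_i=g+\bar\alpha$, and the computation $-(\varepsilon-\tau)\bar\alpha+\sum_{i\notin\bar I}p_ig\leq0$ goes through unchanged.
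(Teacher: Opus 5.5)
Your sandwich structure, the lower bound, and the Theorem~\ref{theorem_general_case} case are correct. Indexing the convex-case construction by $\bar I$, rather than by the strict set $\{i:g(\bm x(\epsilon),\rxi^i)<0\}$, is also the right call. The strict set may contain scenarios with $g(\bm x^*,\rxi^i)>0$, on which $g(\cdot,\rxi^i)$ is not assumed convex or even continuous, and those scenarios would receive $\hat\alpha_i>1$, which the corollary forbids.

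The gap is your claim that after this change ``the feasibility verification is identical.'' The verification in Theorem~\ref{theorem_general_case} silently uses $g\ge 0$ on the complement of the index set. That is what gives $\bar\alpha\ge 0$ (so $\hat\beta=-\bar\alpha\le 0$) and $\hat s_i=g+\bar\alpha\ge 0$. Once the index set is $\bar I$, scenarios in $[N]\setminus\bar I$ can have $g(\bm x(\epsilon),\rxi^i)<0$: strictly satisfied scenarios in $I^*\setminus\bar I$, or scenarios outside $I^*$. Then both sign constraints can fail. Your ``main obstacle'' paragraph only treats the scenarios with $g=0$.

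For instance, take $N=2$, $p_1=p_2=1/2$, $\varepsilon=0.6$, $\X=\Re_+$, objective $x$, and $g(x,\rxi^1)=g(x,\rxi^2)=x-1$. Then $x^*=0$ and $I^*=\{1,2\}$. The choice $\bar I=\{1\}$ with $\bm x^1=0$ meets the hypotheses of Theorem~\ref{theorem_convex_case}. Here $\bm x(\epsilon)=0$ and $\tau=1/2$, and your formulas give $\bar\alpha=-5$, hence $\hat\beta=5>0$ and $\hat s_2=-6<0$.

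The repair is short: for $i\notin\bar I$, keep $\hat\alpha_i=1$ but replace $g$ by its positive part. That is, set $\bar\alpha:=\sum_{i\notin\bar I}p_i\,(g(\bm x(\epsilon),\rxi^i))_+/(\varepsilon-\tau)\ge 0$ and $\hat s_i:=(g(\bm x(\epsilon),\rxi^i))_+ +\bar\alpha\ge 0$. Then $\hat s_i+\hat\beta=(g(\bm x(\epsilon),\rxi^i))_+\ge \hat\alpha_i\, g(\bm x(\epsilon),\rxi^i)$ for $i\notin\bar I$. The constraints for $i\in\bar I$ are verified exactly as before, and the aggregate constraint holds with equality. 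The rest of your argument then goes through: $\hat{\bm\alpha}$ is admissible since $\bar I\subseteq I^*$, and you let $\epsilon\to 0$.

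The paper gives no separate proof of this corollary. It is meant to be read off the constructions of Theorems~\ref{theorem_general_case} and~\ref{theorem_convex_case}, which is exactly your route once this fix is made.
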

In the next section, we demonstrate how Corollary~\ref{corollary_auxiliary_optimality} can be integrated into our algorithmic implementations.

\section{Numerical Study}
\label{sec_numerical_study}
In this section, we numerically demonstrate the effectiveness of the proposed methods. All the instances in this section are executed in Python 3.9 with calls to solver Gurobi (version 11.0.3 with default settings) on a personal PC with an Apple M1 Pro processor and 16GB of memory. To evaluate the effectiveness of the proposed algorithms, we use ``\textrm{Improvement}'' to denote the percentage of differences between the value of a proposed algorithm and CVaR approximation, i.e.,
\begin{align*}
\textrm{Improvement }(\%) = \frac{ \CVaR \textrm{ approximation value}-\textrm{value of a proposed algorithm} }{|\CVaR \textrm{ approximation value}|}\times 100\%.
\end{align*}

\subsection{Joint Chance Constraint: \rev{Instances from \cite{song2014chance}}}
\label{sec_numerical_study_joint}
The joint CCP that we test admits the following form:
\begin{align*}
v^* = \min_{\bm x\in[0,1]^n}\left\{ \bm c^\top \bm x\colon \frac{1}{N} \sum_{i\in[N]}\I\left[ \max_{j\in[J]}\left\{\sum_{k\in[n]}\xi^i_{j,k} x_k - b^i_j\right\} \leq 0 \right]\geq 1-\varepsilon\right\}.
\end{align*}
We evaluate the proposed method on three sets of joint CCP instances with $N=3000$, {\textit{1-4-multi-3000}}, {\textit{1-6-multi-3000}}, and {\textit{1-7-multi-3000}} from \cite{song2014chance}. To approximate the scaled CVaR approximation \eqref{ccp_cvar_eq_scale}, we employ the following two steps:

\noindent\textbf{Step 1.} We use the  solution of CVaR approximation \eqref{ccp_cvar_eq_1} as the initial starting point for Algorithm~\ref{alg_cvar_scale_hybrid} and run Algorithm~\ref{alg_cvar_scale_hybrid} for one iteration to determine the scaling factor $\bm\alpha$.

\noindent\textbf{Step 2.} We use the scaling factor $\bm\alpha$ from Step 1 as input to Algorithm~\ref{alg_cvar_scale} and  implement Algorithm~\ref{alg_cvar_scale} with $\delta_2=-0.005$ for $25$ iterations to obtain the best objective value. 

For both Step~1 and Step~2, we consider incorporating the auxiliary optimality information discussed in Corollary~\ref{corollary_auxiliary_optimality}. We use $v^*_{U}$ to denote the best upper bound of the scaled CVaR approximation \eqref{ccp_cvar_eq_scale} 
and solve $\eta_{i}=\min_{\bm x\in[0,1]^n}\{\bm c^{\top} \bm x: \sum_{k\in[n]}\xi^i_{j,k} x_k - b^i_j \leq 0,~j\in[J]\}$ for each $i\in [N]$. Following the approach recently employed in \cite{jiang2024terminator}, if $\eta_{i} > v^*_{U}$ for some $i\in [N]$, we set $\alpha_i=1$.

We record the total running time of the steps above as the running time of approximating the scaled CVaR approximation \eqref{ccp_cvar_eq_scale}. For comparison, we also implement the $\alsox\#$ algorithm from \cite{jiang2025also}. In $\alsox\#$ (see the detailed algorithm in Algorithm~\ref{alg_alsox_sharp} of Appendix~\ref{sec_scaling_alsoxs}), we choose $t_L$ as the quantile bound from \citet{song2014chance}, $t_U$ as the CVaR approximation \eqref{ccp_cvar_eq_1}, and $\delta_A=0.05$. 
To ensure that $N\varepsilon$ is not an integer, we consider the risk parameters \rev{$\varepsilon\in\{0.050333,0.100333,0.200333,0.300333\}$}. \rev{Table~\ref{tab_joint_eps_020_summary} summarizes the average numerical performance across instances, while the detailed results are reported in Table~\ref{tab_joint_eps_020} in Appendix~\ref{appendix_numerical_study}}. 

We observe that our approach to solving the scaled CVaR approximation \eqref{ccp_cvar_eq_scale} consistently improves the objective value compared to the CVaR approximation \eqref{ccp_cvar_eq_1}. In most cases, 
our method achieves greater improvement than the $\alsox\#$ method and demonstrates superior average performance. 
Note that our approach is different from $\alsox\#$ and does not rely on the bisection procedure that the latter requires. 
Although our method for approximating the scaled CVaR approximation \eqref{ccp_cvar_eq_scale} requires a longer computation time than the $\alsox\#$ method, due to the quadratic constraints in Algorithm~\ref{alg_cvar_scale_hybrid} and the increased number of iterations in Algorithm~\ref{alg_cvar_scale}, it yields better objective value improvements, highlighting its effectiveness despite the added computational cost. We also remark that we compare our method with the approach that uses an interior point local solver IPOPT \citep{wachter2006implementation} to directly solve the scaled CVaR approximation \eqref{ccp_cvar_eq_scale}.
\rev{The fact that our method outperformed IPOPT further suggests that the nonconvex optimization problem \eqref{ccp_cvar_eq_scale} is challenging to solve in practice.}
Detailed numerical comparisons can be found in Table~\ref{tab_joint_ipopt_eps_020} 
of Appendix~\ref{appendix_sec_numerical_ipopt}.

\begin{table}[tb!]
\vspace{-1em}
\centering
\scriptsize
\caption{Numerical Results of a Joint CCP with \rev{Different} $\varepsilon$}
\renewcommand{\arraystretch}{1}
\label{tab_joint_eps_020_summary}
\setlength{\tabcolsep}{2pt}
\rev{
\begin{tabular}{ccccrrrrrrr}
\hline
\multirow{2}{*}{$\varepsilon$} & \multirow{2}{*}{$n$} & \multirow{2}{*}{$J$} & \multirow{2}{*}{Instance} & \multirow{2}{*}{\makecell{CVaR \\ Approximation}} & \multicolumn{3}{c}{$\alsox\#$} & \multicolumn{3}{c}{Scaled CVaR Approximation} \\
\cline{6-11}
&  &  &  &  & Value & Time (s) & Improvement & Value & Time (s) & Improvement \\
\hline
\multirow{3}{*}{$0.050333$}
& $20$ & $10$ & 1-4-multi-3000 & $-5778.50$ & $-5845.66$ & $83.83$ & $1.16\%$ & $-5851.09$ & $525.05$ & $1.26\%$ \\
& $39$ & $5$  & 1-6-multi-3000 & $-9864.42$ & $-9976.61$ & $83.86$ & $1.14\%$ & $-9979.76$ & $530.35$ & $1.17\%$ \\
& $50$ & $5$  & 1-7-multi-3000 & $-15766.16$ & $-15879.72$ & $108.17$ & $0.72\%$ & $-15885.74$ & $690.25$ & $0.76\%$ \\
\hline
\hline
\multirow{3}{*}{$0.100333$}
& $20$ & $10$ & 1-4-multi-3000 & $-5826.18$ & $-5901.36$ & $84.28$ & $1.29\%$ & $-5905.43$ & $531.48$ & $1.36\%$ \\
& $39$ & $5$  & 1-6-multi-3000 & $-9947.12$ & $-10081.34$ & $84.15$ & $1.35\%$ & $-10086.58$ & $538.40$ & $1.40\%$ \\
& $50$ & $5$  & 1-7-multi-3000 & $-15852.28$ & $-15995.49$ & $108.00$ & $0.90\%$ & $-16004.91$ & $691.10$ & $0.96\%$ \\
\hline
\hline
\multirow{3}{*}{$0.200333$}
& $20$ & $10$ & 1-4-multi-3000 & $-5880.51$ & $-5970.65$ & $99.69$ & $1.53\%$ & $-5977.62$ & $660.87$ & $1.65\%$ \\
& $39$ & $5$  & 1-6-multi-3000 & $-10047.76$ & $-10219.45$ & $85.62$ & $1.71\%$ & $-10226.30$ & $543.84$ & $1.78\%$ \\
& $50$ & $5$  & 1-7-multi-3000 & $-15962.55$ & $-16145.73$ & $107.37$ & $1.15\%$ & $-16153.17$ & $688.10$ & $1.19\%$ \\
\hline
\hline
\multirow{3}{*}{$0.300333$}
& $20$ & $10$ & 1-4-multi-3000 & $-5917.73$ & $-6022.59$ & $83.89$ & $1.77\%$ & $-6031.27$ & $563.86$ & $1.92\%$ \\
& $39$ & $5$  & 1-6-multi-3000 & $-10120.96$ & $-10325.93$ & $85.49$ & $2.03\%$ & $-10332.91$ & $541.71$ & $2.09\%$ \\
& $50$ & $5$  & 1-7-multi-3000 & $-16041.49$ & $-16260.96$ & $108.01$ & $1.37\%$ & $-16268.84$ & $686.50$ & $1.42\%$ \\
\hline
\multicolumn{7}{r}{Average} & $\mathbf{1.34}\%$ &  &  & $\mathbf{1.41}\%$ \\
\hline
\end{tabular}
}
\end{table}

\subsection{\rev{Portfolio Optimization}}
\rev{In this subsection, we study a portfolio optimization problem adapted from \cite{xie2020bicriteria}, with relevant formulations appearing in \cite{qiu2014covering,chen2024data,ho2023strong}. We consider the following formulation, where the decision vector is restricted to $[0,1]^n$ and the total allocation is controlled by an investment budget constraint $\sum_{k\in[n]}x_k\leq 0.2n$:
\begin{align*}
v^* = \min_{\bm x\in[0,1]^n}\left\{ \bm c^\top \bm x\colon \frac{1}{N} \sum_{i\in[N]}\I\left[ \sum_{k\in[n]}\xi^i_k x_k \geq 1 \right]\geq 1-\varepsilon, \sum_{k\in[n]}x_k\leq 0.2n\right\}.
\end{align*}
For numerical experiments, we follow the data generation scheme in \cite{xie2020bicriteria}, where $n$ is fixed to $50$, $N \in \{500,1000\}$, $\{\rxi^i\}_{i\in[N]}$ are i.i.d. uniform random variables in the range from $0.8$ to $1.2$ and the risk parameter $\varepsilon \in \{0.050333, 0.100333, 0.200333, 0.300333\}$. For each combination of $(n,N,\epsilon)$, we generate 5 instances. The cost vector $\bm c$ is generated at random, where each component takes an integer value uniformly sampled from $\{1,\cdots,100\}$. We employ the following two steps to approximate the scaled CVaR approximation \eqref{ccp_cvar_eq_scale}:}

\rev{
\noindent\textbf{Step 1.} We use the  solution of CVaR approximation \eqref{ccp_cvar_eq_1} as the initial starting point for Algorithm~\ref{alg_cvar_scale_hybrid}.
}

\rev{
\noindent\textbf{Step 2.} We implement Algorithm~\ref{alg_cvar_scale} with $\delta_2=-0.005$ for $20$ iterations to obtain the best objective value. 
}

\rev{
Similar to the joint chance constraint case discussed in Section~\ref{sec_numerical_study_joint}, we incorporate the auxiliary optimality information outlined in Corollary~\ref{corollary_auxiliary_optimality} in both Step~1 and Step~2. Let $v^*_{U}$ denote the best upper bound of the scaled CVaR approximation \eqref{ccp_cvar_eq_scale}. For each $i\in [N]$, we solve $\eta_{i}=\min_{\bm x\in[0,1]^n}\{\bm c^{\top} \bm x: \sum_{k\in[n]}\xi^i_k x_k \geq 1,\sum_{k\in[n]}x_k\leq 0.2n\}$ . If $\eta_{i} > v^*_{U}$ for any $i\in [N]$, then we enforce $\alpha_i=1$. All other settings are consistent with those described in Section~\ref{sec_numerical_study_joint}. 
Table~\ref{tab_porfolio_summary} summarizes the average numerical performance across instances, whereas the detailed results under different choices of $\varepsilon$ and $N$ are provided in Table~\ref{tab_porfolio} in Appendix~\ref{appendix_numerical_study}. 
Overall, our approach for solving the scaled CVaR approximation \eqref{ccp_cvar_eq_scale} consistently yields higher-quality solutions than the $\alsox\#$ method. When averaged across all tested instances, the improvement achieved by the scaled CVaR approach increases with the violation level~$\varepsilon$, ranging from $2.43\%$ versus $1.70\%$ at $\varepsilon=0.050333$ to $9.02\%$ versus $7.80\%$ at $\varepsilon=0.300333$. These results indicate that the proposed method attains improved objective values, particularly for moderate-to-high risk parameters, while requiring only a modest increase in computational time.
}

\begin{table}[tb!]
\vspace{-1em}
\centering
\scriptsize
\caption{\rev{Numerical Results of Portfolio Optimization}}
\renewcommand{\arraystretch}{1}
\setlength{\tabcolsep}{2pt}
\label{tab_porfolio_summary}
\rev{
\begin{tabular}{cccrrrrrrr}
\hline
\multirow{2}{*}{$\varepsilon$} & \multirow{2}{*}{$n$} & \multirow{2}{*}{$N$} & \multirow{2}{*}{\makecell{CVaR \\ Approximation}} & \multicolumn{3}{c}{$\alsox\#$} & \multicolumn{3}{c}{Scaled CVaR Approximation} \\
\cline{5-10}
&  &  &  & Value & Time (s) & Improvement & Value & Time (s) & Improvement \\
\hline
{0.050333}
& {50}
& 500  & 2.86 & 2.80 & 0.38 & 1.76\% & 2.79 & 9.62  & 2.37\% \\
&      & 1000 & 2.84 & 2.79 & 0.70 & 1.63\% & 2.77 & 19.03 & 2.48\% \\
\hline
\hline
\multirow{2}{*}{0.100333}
& \multirow{2}{*}{50}
& 500  & 2.78 & 2.68 & 0.48 & 3.24\% & 2.68 & 9.91  & 3.66\% \\
&      & 1000 & 2.77 & 2.69 & 0.94 & 2.36\% & 2.67 & 20.55 & 3.54\% \\
\hline
\hline
\multirow{2}{*}{0.200333}
& \multirow{2}{*}{50}
& 500  & 2.68 & 2.53 & 0.64 & 5.08\% & 2.52 & 10.20 & 5.89\% \\
&      & 1000 & 2.67 & 2.52 & 1.23 & 5.00\% & 2.50 & 20.94 & 6.00\% \\
\hline
\hline
\multirow{2}{*}{0.300333}
& \multirow{2}{*}{50}
& 500  & 2.60 & 2.37 & 0.68 & 7.76\% & 2.36 & 10.41 & 8.92\% \\
&      & 1000 & 2.58 & 2.35 & 1.34 & 7.84\% & 2.33 & 21.79 & 9.12\% \\
\hline
\multicolumn{6}{r}{Average} & $\mathbf{4.33\%}$ &  &  & $\mathbf{5.25\%}$ \\
\hline
\multicolumn{7}{l}{\tiny *Each row is the average over five independent instances.}
\end{tabular}
}
\end{table}


\section{Conclusion}
\label{sec_conclusion}
In this paper, we investigated the scaled CVaR approximation for solving CCPs. We provided sufficient conditions under which the scaled CVaR approximation preserves an optimal solution of a CCP. We also developed efficient algorithms to solve the scaled CVaR approximation. Our numerical study confirmed the effectiveness of the proposed algorithms. \rev{An important direction for future research is the extension of the proposed scaling framework to broader distributionally robust settings, including general Wasserstein ambiguity sets, which may require new definitions of scaling.}

\bibliographystyle{plain}
\bibliography{scaled_cvar.bib}

\newpage
\appendix

\section{Examples}
\label{appendix_sec_example}

\begin{example}\rm
\label{example_motivate_covering}
Consider a single CCP with $2$ equiprobable scenarios (i.e., $p_1=p_2=1/2$), risk parameter $\varepsilon=2/3$, set $\X=\Re_+^2$, constraint $g(\bm x,\rxi)=1-{\rxi}^\top \bm{x}$, and $\rxi^1 =(1,0)^\top$, and $\rxi^2 =(1,1)^\top$. In this case, the CCP \eqref{eq_ccp_finite} is equivalent to the following mixed-integer linear program
\begin{align}\label{exmp}
v^*=\min_{\bm{x}\in\Re^2_+,\bm z \in\{0,1\}^2}\left\{ 2x_1+x_2\colon x_1\geq z_1, x_1+x_2\geq z_2, z_1+z_2\geq 1	\right\}.
\end{align}
Note that here we have $v^*=1$. The CVaR approximation of the CCP \eqref{exmp} is
\begin{align*}
v^\CVaR=\min_{x_1\geq0, x_2\geq0, \beta\leq 0,\bm s\geq \bm 0} \left\{2x_1+x_2: 
\begin{array}{l}
\displaystyle
x_1\geq 1- s_1-\beta, x_1+x_2\geq 1- s_2-\beta, \\
\displaystyle (s_1+s_2)/2+2\beta/3\leq 0
\end{array}
\right\},
\end{align*}
with $v^\CVaR=2$. 
To improve the CVaR approximation, we can implement a scaling procedure. To illustrate, note that, given $\alpha\geq 1$, $g(\bm x,\rxi)\leq 0$ is equivalent to $\bar{g}(\bm x,\rxi)\leq 0$ with $\bar{g}(\bm x,\rxi^1)=g(\bm x,\rxi^1)$ and $\bar{g}(\bm x,\rxi^2)=\rev{\alpha{g}(\bm x,\rxi^2)}$. In this case, one can replace $g(\bm x,\rxi)\leq 0$ by $\bar{g}(\bm x,\rxi)\leq 0$ in the original CCP \eqref{exmp}, resulting in the following CVaR approximation: 
\begin{align*}
v^\CVaR(\alpha)=\min_{x_1\geq0, x_2\geq0, \beta\leq 0,\bm s\geq \bm 0} \left\{2x_1+x_2: 
\begin{array}{l}
\displaystyle
x_1\geq 1- s_1-\beta, \alpha(x_1+x_2)\geq \alpha - s_2-\beta, \\
\displaystyle (s_1+s_2)/2+2\beta/3\leq 0
\end{array}
\right\}.
\end{align*}
Note that the optimal objective value $v^*$ is independent of the scaling factor $\alpha$ while $v^\CVaR(\alpha)$ varies with changes in $\alpha$.
When $\alpha\in[1,3)$, $v^\CVaR(\alpha)=2$ with an optimal solution $x_1^*=1, x_2^*=0, s_1^*=s_2^*=0, \beta^*=0$. When $\alpha \in(3,\infty)$, $v^\CVaR(\alpha)=1+3/\alpha$ with an optimal solution $x_1^*=0,x_2^*=1+\frac{3}{\alpha}, s_1^*=4,s_2^*=0,\beta^*=-3$. The optimal objective values $v^\CVaR(\alpha)$ of the scaled CVaR approximation are plotted in Figure~\ref{Fig_example_two_constraint}. As we may observe in this example, $v^*=\lim_{\alpha\rightarrow \infty}v^\CVaR(\alpha)$.
\begin{figure}[htbp]
\begin{center}
\centering
\includegraphics[width=0.5\textwidth]{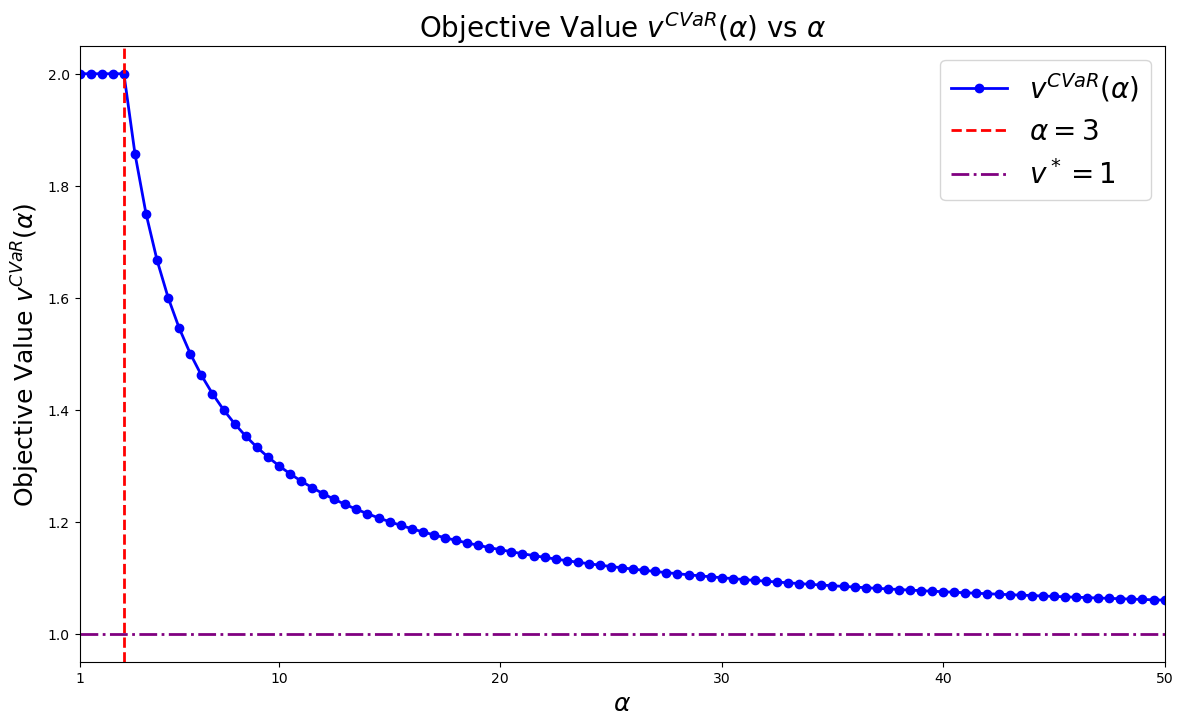}
\caption{The Scaled CVaR Approximation Objective $v^\CVaR(\alpha)$ for Scaling Factor $\alpha\in[1,50]$.}
\label{Fig_example_two_constraint}
\end{center}
\end{figure}\QEDB
\end{example}

\begin{example}\rm
\label{example_discrete_exact}
Consider a single CCP with $4$ equiprobable scenarios (i.e., $N=4$, $p_1=p_2=p_3=p_4=1/4$), risk parameter $\varepsilon=1/2$, deterministic set $\X=\Ze_+$, constraint $g(\bm x,\rxi)={\xi}_1 x - {\xi}_2$,
$\xi_1^1=-9$, $\xi_1^2=\xi_1^3=\xi_1^4=4$, $\xi_2^1=-10$, and $\xi_2^2=\xi_2^3=\xi_2^4=2$. In this example, CCP \eqref{eq_ccp_finite} resorts to
\begin{equation*}
v^*=\min_{x\in \Ze_+}\left\{-x\colon  \I\left(9x\geq 10\right)+\I\left(4x\leq 2\right)+\I\left( 4x\leq 2\right)+\I\left( 4x\leq 2\right)\geq 2 \right\}.
\end{equation*}
By a simple calculation, the optimal value is $v^*=0$ with the optimal solution $x^*=0$. Notice that with the solution $x^*=0$, we have (i) $4x^*=0< 2$; and (ii) $1/4[\I\left(9x^*\geq 10\right)+\I\left(4x^*\leq 2\right)+\I\left( 4x^*\leq 2\right)+\I\left( 4x^*\leq 2\right)]>1-1/2$. Hence, the two conditions in \Cref{theorem_general_case} are satisfied. The scaled CVaR approximation \eqref{ccp_cvar_eq_scale} is expected to provide the optimal solution in this example. We first check the corresponding CVaR approximation, that is,
\begin{align*}
v^\CVaR=\min_{x\in \Ze_+, \beta\leq 0,\bm s\geq \bm 0} \left\{-x: 
\begin{array}{l}
\displaystyle
-9x+10\leq  s_1, 4x-2\leq  s_2+\beta, 4x-2\geq  s_3+\beta, \\
\displaystyle  4x-2\leq  s_4+\beta, \sum_{i\in[4]}s_i/4+\beta/2\leq 0
\end{array}
\right\}.
\end{align*}
It turns out this CVaR approximation is infeasible.

We then consider the scaled CVaR approximation \eqref{ccp_cvar_eq_scale}, which can be recast as,
\begin{equation*}
\begin{aligned}
v_S^\CVaR=\inf_{\begin{subarray}{c}x\in \Ze_+, \beta\leq 0,\\\bm s\geq \bm 0,\bm \alpha\geq \bm e\end{subarray}} \left\{-x: 
\begin{array}{l}
\displaystyle
-9x \alpha_1+10\alpha_1\leq  s_1+\beta, 4x\alpha_2-2\alpha_2\leq  s_2+\beta, 4x\alpha_3-2\alpha_3\leq  s_3+\beta, \\
\displaystyle 4x\alpha_4-2\alpha_4\leq  s_4+\beta, \sum_{i\in[4]}s_i/4+\beta/2\leq 0
\end{array}
\right\}.
\end{aligned}
\end{equation*}
We obtain $v_S^\CVaR=0$ with $ x^*=0, \beta^*=-10, \alpha^*_1=1,\alpha^*_2=\alpha^*_3=\alpha^*_4=5, s^*_1=20, s^*_2=s^*_3=s^*_4=0$. Therefore, in this example, we can use the scaled CVaR approximation \eqref{ccp_cvar_eq_scale} to find the optimal solution. 
\QEDB
\end{example}

\begin{example}\rm
\label{example_discrete_not_exact_condition_1}
Consider a single CCP with $4$ equiprobable scenarios (i.e., $N=4$, $p_1=p_2=p_3=p_4=1/4$), risk parameter $\varepsilon=1/2$, deterministic set $\X=\Ze_+$, constraint $g(\bm x,\rxi)={\xi}_1 x - {\xi}_2$, $\xi_1^1=-9$, $\xi_1^2=\xi_1^3=\xi_1^4=4$, $\xi_2^1=-10$, and $\xi_2^2=\xi_2^3=2,\xi_2^4=0$. In this example, CCP \eqref{eq_ccp_finite} resorts to
\begin{equation*}
v^*=\min_{x\in \Ze_+}\left\{-x\colon  \I\left(9x\geq 10\right)+\I\left(4x\leq 2\right)+\I\left( 4x\leq 2\right)+\I\left( 4x\leq 0\right)\geq 2 \right\}.
\end{equation*}
By a simple calculation, the optimal value is $v^*=0$ with the optimal solution $x^*=0$. 
We also have $ 4x^*=0\not< 0$, which violates the first assumption of \Cref{theorem_general_case}. In this case, the scaled CVaR approximation \eqref{ccp_cvar_eq_scale} can be written as
\begin{align*}
v_S^\CVaR=\inf_{x\in \Ze_+, \beta\leq 0,\bm s\geq \bm 0,\bm \alpha\geq \bm e} \left\{-x: 
\begin{array}{l}
\displaystyle
-9x \alpha_1+10\alpha_1\leq  s_1+\beta, 4x\alpha_2-2\alpha_2\leq  s_2+\beta,\\
\displaystyle  4x\alpha_3-2\alpha_3\leq  s_3+\beta,  4x\alpha_4\leq  s_4+\beta, \sum_{i\in[4]}s_i/4+\beta/2\leq 0
\end{array}
\right\}.
\end{align*}
However, in this case, the scaled CVaR approximation is infeasible. This implies that when \rev{the condition $g(\bm x^*,\rxi^i)<0$ for each $i\in I^*$} in \Cref{theorem_general_case} is violated, the scaled 
CVaR approximation may not yield the optimal solution.
\QEDB
\end{example}

\begin{example}\rm
\label{example_discrete_not_exact_condition_2}
Consider a single CCP with $4$ equiprobable scenarios (i.e., $N=4$, $p_1=p_2=p_3=p_4=1/4$), risk parameter $\varepsilon=1/4$, deterministic set $\X=\Ze_+$, constraint $g(\bm x,\rxi)={\xi}_1 x - {\xi}_2$, $\xi_1^1=-9$, $\xi_1^2=\xi_1^3=\xi_1^4=4$, $\xi_2^1=-10$, and $\xi_2^2=\xi_2^3=\xi_2^4=2$. In this example, CCP \eqref{eq_ccp_finite} resorts to
\begin{equation*}
v^*=\min_{x\in \Ze_+}\left\{-x\colon  \I\left(9x\geq 10\right)+\I\left(4x\leq 2\right)+\I\left( 4x\leq 2\right)+\I\left( 4x\leq 2\right)\geq 3 \right\}.
\end{equation*}
By a simple calculation, the optimal value is $v^*=0$ with the optimal solution $x^*=0$. We also have $1/4[\I\left(9x^*\geq 10\right)+\I\left(4x^*\leq 2\right)+\I\left( 4x^*\leq 2\right)+\I\left( 4x^*\leq 2\right)]=3/4\not>1-1/4$, which violates the second assumption of \Cref{theorem_general_case}. In this case, the scaled CVaR approximation \eqref{ccp_cvar_eq_scale} can be written as
\begin{align*}
v_S^\CVaR=\inf_{x\in \Ze_+, \beta\leq 0,\bm s\geq \bm 0,\bm \alpha\geq \bm e} \left\{-x: 
\begin{array}{l}
\displaystyle
-9x \alpha_1+10\alpha_1\leq  s_1+\beta, 4x\alpha_2-2\alpha_2\leq  s_2+\beta, \\
\displaystyle  4x\alpha_3-2\alpha_3\leq  s_3+\beta, 4x\alpha_4-2\alpha_4\leq  s_4+\beta, \sum_{i\in[4]}s_i/4+\beta/4\leq 0
\end{array}
\right\}.
\end{align*}
However, this scaled CVaR approximation is infeasible, which implies that we cannot use the scaled CVaR approximation to find the optimal solution when we violate the \rev{condition $\sum_{i\in I^*}p_i >1-\varepsilon$} in \Cref{theorem_general_case}. 
\QEDB
\end{example}

\begin{example}\rm
\label{example_continuous_exact}
Consider a single CCP with $3$ equiprobable scenarios (i.e., $N=3$, $p_1=p_2=p_3=1/3$), risk parameter $\varepsilon=0.4$, set $\X=\Re_+^2$, constraint $g(\bm x,\rxi)=1-{\rxi}^\top \bm{x}$, and $\rxi^1 =(1,0)^\top$, $\rxi^2 =(1,1)^\top$, and $\rxi^3 =(1,1)^\top$. In this case, the CCP is equivalent to the following mixed-integer linear program
\begin{align*}
v^*=\min_{\bm{x}\in\Re^2_+,\bm z \in\{0,1\}^3}\left\{ 3x_1+2x_2\colon x_1\geq z_1, x_1+x_2\geq z_2,x_1+x_2\geq z_3, z_1+z_2+z_3\geq 2	\right\} =2.
\end{align*}
CVaR approximation is
\begin{align*}
v^\CVaR=\min_{\begin{subarray}{c} x_1\geq0, x_2\geq0,\\ \beta\leq 0,\bm s\geq \bm 0\end{subarray}} \left\{3x_1+2x_2: 
\begin{array}{l}
\displaystyle
x_1\geq 1- s_1-\beta, x_1+x_2\geq 1- s_2-\beta, \\
\displaystyle x_1+x_2\geq 1- s_3-\beta, \sum_{i\in[3]}s_i/3+0.4\beta\leq 0
\end{array}
\right\}.
\end{align*}
We have $v^\CVaR=3$. We then consider the scaled CVaR approximation \eqref{ccp_cvar_eq_scale}, that is,
\begin{align*}
v_S^\CVaR=\inf_{\begin{subarray}{c} x_1\geq0, x_2\geq0,\\
\beta\leq 0,\bm s\geq \bm 0,\bm \alpha\geq \bm e\end{subarray}} \left\{3x_1+2x_2: 
\begin{array}{l}
\displaystyle
x_1\alpha_1\geq \alpha_1-s_1-\beta, x_1\alpha_2+x_2\alpha_2\geq \alpha_2- s_2-\beta,\\
\displaystyle x_1\alpha_3+x_2\alpha_3\geq \alpha_3- s_3-\beta, \sum_{i\in[3]}s_i/3+0.4\beta\leq 0
\end{array}
\right\}.
\end{align*}
We obtain $v_S^\CVaR=2$. For $\alpha>10$, an optimal solution of the scaled CVaR approximation is $ x_1^*=0, x_2^*=1+5/\alpha^*_2,\beta^*=-5, \alpha_1=1,\alpha^*_2=\alpha^*_3, s^*_1=6, s^*_2=s^*_3=0$. Hence, $\lim_{\alpha^*_2\to\infty}x_2^*=1$.
Therefore, in this example, we can use the scaled CVaR approximation \eqref{ccp_cvar_eq_scale} to find the optimal solution.
\QEDB
\end{example}

\begin{example}\rm
\label{example_continuous_not_exact}
Revisit \Cref{example_continuous_exact} with 
the risk parameter $\varepsilon=1/3$. The scaled CVaR approximation \eqref{ccp_cvar_eq_scale} can be written as
\begin{align*}
v_S^\CVaR=\inf_{\begin{subarray}{c} x_1\geq0, x_2\geq0,\\
\beta\leq 0,\bm s\geq \bm 0,\bm \alpha\geq \bm e\end{subarray}} \left\{3x_1+2x_2: 
\begin{array}{l}
\displaystyle
x_1\alpha_1\geq \alpha_1- s_1-\beta, x_1\alpha_2+x_2\alpha_2\geq \alpha_2- s_2-\beta,\\
\displaystyle  x_1\alpha_3+x_2\alpha_3\geq \alpha_3- s_3-\beta, \sum_{i\in[3]}s_i/3+\beta/3\leq 0
\end{array}
\right\}.
\end{align*}
We obtain $v_S^\CVaR=3$ with $ x_1^*=1, x_2^*=0,\beta^*=0, \alpha_1^*=\alpha_2^*=\alpha_3^*=1, s_1^*=s_2^*=s_3^*=0$, which is the same solution from CVaR approximation. Hence, in this example, we cannot use the scaled CVaR approximation \eqref{ccp_cvar_eq_scale} to find the optimal solution.
\QEDB
\end{example}

\section{Scaling in $\alsoxs$}
\label{sec_scaling_alsoxs}

The $\alsoxs$ method with a bilevel structure, recently proposed by \cite{jiang2025also}, can be used to solve CCP \eqref{eq_ccp_finite}.  In the lower-level $\alsoxs$, we solve the $\CVaR$ loss function with a given upper bound of the upper-level objective value. We then check whether its optimal solution $\bm{x}^*$ satisfies the chance constraint. 
The upper-level $\alsoxs$ searches for the best upper bound of the objective value.  Formally, the output of $\alsoxs$ admits the form:

\begin{subequations}
\begin{algorithm}[htbp]
\caption{$\alsoxs$ \citep{jiang2025also}}
\label{alg_alsox_sharp}
\begin{algorithmic}[1]
\State \textbf{Input:} Let $\delta_A$ denote the stopping tolerance parameter, and $t_L$ and $t_U$ be the known lower and upper bounds of the optimal value of CCP \eqref{eq_ccp_finite}, respectively 
\While {$t_U-t_L>\delta_A$}
\State Let $t=(t_L+t_U)/2$ and $(\bm{x}^{\as},\beta^{\as})$ be an optimal solution of the lower-level $\alsoxs$ \eqref{eq_also_sharp_ori}:
\begin{align}
(\bm x^{\as},\beta^{\as})\in\arg\min _{\bm {x}\in \X,\beta\leq 0}\, \left\{ \varepsilon\beta+ \sum_{i\in[N]}p_i\left[  g(\bm x,\rxi^i)-\beta \right]_+ \colon \bm{c}^\top \bm{x} \leq t \right\}\label{eq_also_sharp_ori}
\end{align}
\State Let $t_L=t$ if $\bm x^{\as}$ satisfies the checking condition of the upper-level $\alsoxs$ \eqref{alsoxs_drccp_formualtion_upper}:
\begin{align}
\sum_{i\in[N]}p_i\I\left[ g(\bm x^{\as},\rxi^i)\leq 0\right] \geq 1-\varepsilon; \label{alsoxs_drccp_formualtion_upper}
\end{align}
\,\quad otherwise, $t_U=t$
\EndWhile
\State \textbf{Output:} A feasible solution $\bm x^{\as}$ and its objective value $t_U$ to CCP \eqref{eq_ccp_finite} 
\end{algorithmic}
\end{algorithm}
\end{subequations}
To enhance the solution quality of \Cref{alg_alsox_sharp}, we can integrate $\alsoxs$ with the scaling procedure proposed in  \Cref{alg_cvar_scale}. In the scaled $\alsoxs$, we first execute $\alsoxs$ \Cref{alg_alsox_sharp}, and when Step 4 of $\alsoxs$ \Cref{alg_alsox_sharp}  encounters an infeasible solution (i.e., the optimal solution $\bm{x}^{\as}$ of the lower-level $\alsoxs$ \eqref{eq_also_sharp_ori} violates the chance constraint $\sum_{i\in[N]}p_i\I\left[ g(\bm x^{\as},\rxi^i)\leq 0\right] < 1-\varepsilon$), then we run the scaling procedure in \Cref{alg_cvar_scale} with the same $t$ to update the coefficient of the constraint $g(\bm x,\cdot)$ and see if we can find a feasible solution. If YES, we further decrease $t_U=t$; otherwise, we increase $t_L=t$. The detailed procedure for the scaled $\alsoxs$ algorithm is shown in \Cref{alg_alsox_sharp_scaled}.

\begin{subequations}
\begin{algorithm}[htbp]
\caption{Scaled $\alsoxs$}
\label{alg_alsox_sharp_scaled}
\begin{algorithmic}[1]
\State \textbf{Input:} Let $\delta_A$ denote the stopping tolerance parameter, and $t_L$ and $t_U$ be the known lower and upper bounds of the optimal value of CCP \eqref{eq_ccp_finite}, respectively 
\While {$t_U-t_L>\delta_A$}
\State Let $t=(t_L+t_U)/2$ and $(\bm{x}^{\as},\beta^{\as})$ be an optimal solution of the lower-level $\alsoxs$ \eqref{eq_also_sharp_ori_1}:
\begin{align}
(\bm x^{\as},\beta^{\as})\in\arg\min _{\bm {x}\in \X,\beta\leq 0}\, \left\{ \varepsilon\beta+ \sum_{i\in[N]}p_i\left[  g(\bm x,\rxi^i)-\beta \right]_+ \colon \bm{c}^\top \bm{x} \leq t \right\}\label{eq_also_sharp_ori_1}
\end{align}
\State Let $t_L=t$ if $\bm x^{\as}$ satisfies the checking condition of the upper-level $\alsoxs$ \eqref{alsoxs_drccp_formualtion_upper}, i.e., $\sum_{i\in[N]}p_i\I\left[ g(\bm x^{\as},\rxi^i)\leq 0\right] \geq1-\varepsilon$; otherwise, run the scaling procedure in \Cref{alg_cvar_scale} to update the coefficient of the constraint $g(\bm x,\cdot)$. If the solution output from the scaling procedure in \Cref{alg_cvar_scale} is feasible to the CCP, let $t_L=t$; otherwise, $t_U=t$
\EndWhile
\State \textbf{Output:} A feasible solution $\bar{\bm x}^{\as}$ and its objective value $t_U$ to CCP \eqref{eq_ccp_finite} 
\end{algorithmic}
\end{algorithm}
\end{subequations}

We make the following remarks about the scaled $\alsoxs$ algorithm \Cref{alg_alsox_sharp_scaled}:
\begin{enumerate} [label=(\roman*)]
\item Formally, the scaled $\alsoxs$ \Cref{alg_alsox_sharp_scaled} enhances the solution quality of $\alsoxs$ \Cref{alg_alsox_sharp};
\item We can use the solutions of the the lower-level $\alsoxs$ \eqref{eq_also_sharp_ori_1} as warm-starts for the scaling procedure in \Cref{alg_alsox_sharp_scaled}. However, the detailed implementation of \Cref{alg_alsox_sharp_scaled} is beyond the scope of this paper and is left for future research.
\end{enumerate}

\section{\rev{Detailed Numerical Results in Section~\ref{sec_numerical_study}}}
\label{appendix_numerical_study}
\rev{In this section, we present the detailed numerical results from Section~\ref{sec_numerical_study}, reported in Tables~\ref{tab_joint_eps_020} and~\ref{tab_porfolio}.}

\begin{table}[htbp]
\vspace{-1em}
\centering
\scriptsize
\caption{Numerical Results of a Joint CCP with \rev{Different} $\varepsilon$}
\renewcommand{\arraystretch}{1} 
\label{tab_joint_eps_020}
\setlength{\tabcolsep}{2pt}
\rev{
\begin{tabular}{ccccrrrrrrr}
\hline
\multirow{2}{*}{$\varepsilon$} & \multirow{2}{*}{$n$} & \multirow{2}{*}{$J$} & \multirow{2}{*}{Instance} & \multirow{2}{*}{\makecell{CVaR \\ Approximation}} & \multicolumn{3}{c}{$\alsox\#$} & \multicolumn{3}{c}{Scaled CVaR Approximation} \\
\cline{6-11}
&  &  &  &  & Value & Time (s) & Improvement & Value & Time (s) & Improvement \\
\hline
\multirow{15}{*}{$0.050333$}
& \multirow{5}{*}{$20$} & \multirow{5}{*}{$10$}
& 1-4-multi-3000-1 & $-5789.51$ & $-5843.79$ & $82.49$ & $0.94\%$ & $-5855.93$ & $514.46$ & $1.15\%$ \\
&  &  & 1-4-multi-3000-2 & $-5779.51$ & $-5846.49$ & $82.29$ & $1.16\%$ & $-5847.95$ & $516.39$ & $1.18\%$ \\
&  &  & 1-4-multi-3000-3 & $-5769.41$ & $-5843.38$ & $82.27$ & $1.28\%$ & $-5851.34$ & $514.92$ & $1.42\%$ \\
&  &  & 1-4-multi-3000-4 & $-5783.57$ & $-5851.97$ & $86.89$ & $1.18\%$ & $-5852.95$ & $544.46$ & $1.20\%$ \\
&  &  & 1-4-multi-3000-5 & $-5770.51$ & $-5842.69$ & $85.23$ & $1.25\%$ & $-5847.29$ & $535.01$ & $1.33\%$ \\
\cline{2-11}
& \multirow{5}{*}{$39$} & \multirow{5}{*}{$5$}
& 1-6-multi-3000-1 & $-9847.60$ & $-9959.17$ & $84.92$ & $1.13\%$ & $-9955.26$ & $519.66$ & $1.09\%$ \\
&  &  & 1-6-multi-3000-2 & $-9866.40$ & $-9974.94$ & $83.07$ & $1.10\%$ & $-9981.91$ & $527.56$ & $1.17\%$ \\
&  &  & 1-6-multi-3000-3 & $-9871.75$ & $-9977.33$ & $82.69$ & $1.07\%$ & $-9984.87$ & $510.72$ & $1.15\%$ \\
&  &  & 1-6-multi-3000-4 & $-9883.64$ & $-10003.54$ & $82.87$ & $1.21\%$ & $-10000.99$ & $537.30$ & $1.19\%$ \\
&  &  & 1-6-multi-3000-5 & $-9852.74$ & $-9968.08$ & $85.73$ & $1.17\%$ & $-9975.77$ & $556.52$ & $1.25\%$ \\
\cline{2-11}
& \multirow{5}{*}{$50$} & \multirow{5}{*}{$5$}
& 1-7-multi-3000-1 & $-15778.48$ & $-15889.29$ & $112.51$ & $0.70\%$ & $-15887.76$ & $702.28$ & $0.69\%$ \\
&  &  & 1-7-multi-3000-2 & $-15750.66$ & $-15865.71$ & $105.56$ & $0.73\%$ & $-15875.14$ & $676.66$ & $0.79\%$ \\
&  &  & 1-7-multi-3000-3 & $-15782.39$ & $-15890.83$ & $104.74$ & $0.69\%$ & $-15898.61$ & $665.38$ & $0.74\%$ \\
&  &  & 1-7-multi-3000-4 & $-15774.55$ & $-15892.31$ & $109.25$ & $0.75\%$ & $-15904.19$ & $709.18$ & $0.82\%$ \\
&  &  & 1-7-multi-3000-5 & $-15744.71$ & $-15860.44$ & $108.82$ & $0.74\%$ & $-15863.01$ & $697.77$ & $0.75\%$ \\
\hline
\multicolumn{7}{r}{Average} & $\mathbf{1.01}\%$ &  &  & $\mathbf{1.06}\%$ \\
\hline
\multirow{15}{*}{$0.100333$}
& \multirow{5}{*}{$20$} & \multirow{5}{*}{$10$}
& 1-4-multi-3000-1 & $-5831.68$ & $-5896.96$ & $83.84$ & $1.12\%$ & $-5905.09$ & $528.52$ & $1.26\%$ \\
&  &  & 1-4-multi-3000-2 & $-5828.17$ & $-5905.42$ & $84.64$ & $1.33\%$ & $-5911.56$ & $531.10$ & $1.43\%$ \\
&  &  & 1-4-multi-3000-3 & $-5821.63$ & $-5904.40$ & $82.41$ & $1.42\%$ & $-5905.65$ & $523.53$ & $1.44\%$ \\
&  &  & 1-4-multi-3000-4 & $-5829.83$ & $-5904.91$ & $85.21$ & $1.29\%$ & $-5910.76$ & $541.50$ & $1.39\%$ \\
&  &  & 1-4-multi-3000-5 & $-5819.61$ & $-5895.10$ & $85.31$ & $1.30\%$ & $-5894.11$ & $532.75$ & $1.28\%$ \\
\cline{2-11}
& \multirow{5}{*}{$39$} & \multirow{5}{*}{$5$}
& 1-6-multi-3000-1 & $-9925.97$ & $-10051.00$ & $82.90$ & $1.26\%$ & $-10058.75$ & $526.54$ & $1.34\%$ \\
&  &  & 1-6-multi-3000-2 & $-9945.20$ & $-10074.19$ & $83.89$ & $1.30\%$ & $-10083.20$ & $540.89$ & $1.39\%$ \\
&  &  & 1-6-multi-3000-3 & $-9956.57$ & $-10094.98$ & $84.01$ & $1.39\%$ & $-10093.75$ & $536.86$ & $1.38\%$ \\
&  &  & 1-6-multi-3000-4 & $-9970.29$ & $-10116.59$ & $85.54$ & $1.47\%$ & $-10119.14$ & $558.59$ & $1.49\%$ \\
&  &  & 1-6-multi-3000-5 & $-9937.59$ & $-10069.96$ & $84.41$ & $1.33\%$ & $-10078.05$ & $529.11$ & $1.41\%$ \\
\cline{2-11}
& \multirow{5}{*}{$50$} & \multirow{5}{*}{$5$}
& 1-7-multi-3000-1 & $-15863.65$ & $-16007.58$ & $104.56$ & $0.91\%$ & $-16011.72$ & $673.40$ & $0.93\%$ \\
&  &  & 1-7-multi-3000-2 & $-15838.68$ & $-15984.78$ & $108.66$ & $0.92\%$ & $-15999.82$ & $691.10$ & $1.02\%$ \\
&  &  & 1-7-multi-3000-3 & $-15864.52$ & $-15995.88$ & $104.93$ & $0.83\%$ & $-16006.65$ & $679.16$ & $0.90\%$ \\
&  &  & 1-7-multi-3000-4 & $-15860.93$ & $-16006.49$ & $112.01$ & $0.92\%$ & $-16019.04$ & $711.60$ & $1.00\%$ \\
&  &  & 1-7-multi-3000-5 & $-15833.63$ & $-15982.70$ & $109.83$ & $0.94\%$ & $-15987.32$ & $700.24$ & $0.97\%$ \\
\hline
\multicolumn{7}{r}{Average} & $\mathbf{1.18}\%$ &  &  & $\mathbf{1.24}\%$ \\
\hline
\multirow[0]{15}{*}{$0.200333$} & \multirow[0]{5}{*}{$20$} & \multirow[0]{5}{*}{$10$} & 1-4-multi-3000-1 & $-5882.23$ & $-5970.88$ & $86.58$ & $1.51\%$ & $-5980.71$ & $577.25$ & $1.67\%$ \\
&  &  & 1-4-multi-3000-2 & $-5882.54$ & $-5971.24$ & $86.85$ & $1.51\%$ & $-5973.87$ & $640.85$ & $1.55\%$ \\
&  &  & 1-4-multi-3000-3 & $-5878.65$ & $-5972.94$ & $143.45$ & $1.60\%$ & $-5978.93$ & $906.55$ & $1.71\%$ \\
&  &  & 1-4-multi-3000-4 & $-5885.45$ & $-5975.44$ & $86.07$ & $1.53\%$ & $-5986.93$ & $569.93$ & $1.72\%$ \\
&  &  & 1-4-multi-3000-5 & $-5873.65$ & $-5962.76$ & $95.51$ & $1.52\%$ & $-5967.68$ & $609.77$ & $1.60\%$ \\
\cline{2-11}
& \multirow[0]{5}{*}{$39$} & \multirow[0]{5}{*}{$5$} & 1-6-multi-3000-1 & $-10021.39$ & $-10186.63$ & $86.27$ & $1.65\%$ & $-10194.21$ & $545.36$ & $1.72\%$ \\
&  &  & 1-6-multi-3000-2 & $-10041.39$ & $-10202.49$ & $86.25$ & $1.60\%$ & $-10213.07$ & $545.86$ & $1.71\%$ \\
&  &  & 1-6-multi-3000-3 & $-10060.69$ & $-10239.33$ & $84.70$ & $1.78\%$ & $-10241.54$ & $540.22$ & $1.80\%$ \\
&  &  & 1-6-multi-3000-4 & $-10077.44$ & $-10258.85$ & $85.58$ & $1.80\%$ & $-10261.55$ & $551.13$ & $1.83\%$ \\
&  &  & 1-6-multi-3000-5 & $-10037.88$ & $-10209.94$ & $85.31$ & $1.71\%$ & $-10221.13$ & $536.61$ & $1.83\%$ \\
\cline{2-11}
& \multirow[0]{5}{*}{$50$} & \multirow[0]{5}{*}{$5$} & 1-7-multi-3000-1 & $-15971.22$ & $-16145.13$ & $107.16$ & $1.09\%$ & $-16158.38$ & $698.58$ & $1.17\%$ \\
&  &  & 1-7-multi-3000-2 & $-15953.23$ & $-16146.98$ & $108.54$ & $1.21\%$ & $-16153.00$ & $688.87$ & $1.25\%$ \\
&  &  & 1-7-multi-3000-3 & $-15966.21$ & $-16137.15$ & $105.53$ & $1.07\%$ & $-16152.76$ & $665.84$ & $1.17\%$ \\
&  &  & 1-7-multi-3000-4 & $-15973.34$ & $-16161.04$ & $108.43$ & $1.18\%$ & $-16152.64$ & $693.69$ & $1.12\%$ \\
&  &  & 1-7-multi-3000-5 & $-15948.75$ & $-16138.35$ & $107.20$ & $1.19\%$ & $-16149.07$ & $693.50$ & $1.26\%$ \\
\hline
\multicolumn{7}{r}{Average} & $\mathbf{1.46}\%$ &  &  & $\mathbf{1.54}\%$ \\
\hline
\multirow{15}{*}{$0.300333$} & \multirow{5}{*}{$20$} & \multirow{5}{*}{$10$} & 1-4-multi-3000-1 & $-5918.46$ & $-6021.46$ & $83.16$ & $1.74\%$ & $-6035.26$ & $551.97$ & $1.97\%$ \\
&  &  & 1-4-multi-3000-2 & $-5918.62$ & $-6021.50$ & $82.34$ & $1.74\%$ & $-6031.71$ & $565.63$ & $1.91\%$ \\
&  &  & 1-4-multi-3000-3 & $-5917.11$ & $-6026.06$ & $84.21$ & $1.84\%$ & $-6030.98$ & $558.55$ & $1.92\%$ \\
&  &  & 1-4-multi-3000-4 & $-5923.33$ & $-6026.11$ & $85.33$ & $1.74\%$ & $-6034.92$ & $576.17$ & $1.88\%$ \\
&  &  & 1-4-multi-3000-5 & $-5911.10$ & $-6017.82$ & $84.42$ & $1.81\%$ & $-6023.47$ & $566.99$ & $1.90\%$ \\
\cline{2-11}
& \multirow{5}{*}{$39$} & \multirow{5}{*}{$5$} & 1-6-multi-3000-1 & $-10091.29$ & $-10287.13$ & $84.76$ & $1.94\%$ & $-10291.76$ & $545.71$ & $1.99\%$ \\
&  &  & 1-6-multi-3000-2 & $-10110.44$ & $-10303.21$ & $87.72$ & $1.91\%$ & $-10312.85$ & $548.39$ & $2.00\%$ \\
&  &  & 1-6-multi-3000-3 & $-10136.15$ & $-10344.58$ & $84.32$ & $2.06\%$ & $-10342.07$ & $527.24$ & $2.03\%$ \\
&  &  & 1-6-multi-3000-4 & $-10154.42$ & $-10375.44$ & $85.69$ & $2.18\%$ & $-10384.77$ & $543.57$ & $2.27\%$ \\
&  &  & 1-6-multi-3000-5 & $-10112.49$ & $-10319.31$ & $84.94$ & $2.05\%$ & $-10333.10$ & $543.62$ & $2.18\%$ \\
\cline{2-11}
& \multirow{5}{*}{$50$} & \multirow{5}{*}{$5$} & 1-7-multi-3000-1 & $-16046.86$ & $-16258.91$ & $107.78$ & $1.32\%$ & $-16266.73$ & $689.61$ & $1.37\%$ \\
&  &  & 1-7-multi-3000-2 & $-16035.80$ & $-16267.09$ & $112.93$ & $1.44\%$ & $-16277.78$ & $715.99$ & $1.51\%$ \\
&  &  & 1-7-multi-3000-3 & $-16043.21$ & $-16256.97$ & $104.97$ & $1.33\%$ & $-16270.95$ & $666.46$ & $1.42\%$ \\
&  &  & 1-7-multi-3000-4 & $-16050.82$ & $-16265.87$ & $105.29$ & $1.34\%$ & $-16268.84$ & $681.97$ & $1.36\%$ \\
&  &  & 1-7-multi-3000-5 & $-16030.74$ & $-16255.95$ & $109.07$ & $1.40\%$ & $-16259.88$ & $678.48$ & $1.43\%$ \\
\hline
\multicolumn{7}{r}{Average} & $\mathbf{1.72}\%$ &  &  & $\mathbf{1.81}\%$ \\
\hline
\end{tabular}
}
\end{table}

\begin{table}[htbp]
\vspace{-1em}
\centering
\scriptsize
\caption{\rev{Numerical Results of Portfolio Optimization}}
\renewcommand{\arraystretch}{1} 
\setlength{\tabcolsep}{2pt}
\label{tab_porfolio}
\rev{
\begin{tabular}{ccccrrrrrrrr}
\hline
\multirow{2}{*}{$\varepsilon$} & \multirow{2}{*}{$n$} & \multirow{2}{*}{$N$} &  \multirow{2}{*}{\makecell{Instance \\ with Seeds}} & \multirow{2}{*}{\makecell{CVaR \\ Approximation}} & \multicolumn{3}{c}{$\alsox\#$} & \multicolumn{3}{c}{Scaled CVaR Approximation} \\
\cline{6-11}
&  & &  &  & Value & Time (s) & Improvement & Value & Time (s) & Improvement \\
\hline
\multirow{10}{*}{0.050333}                   & \multirow{10}{*}{50}                   & \multirow{5}{*}{500}                   & 1231                                                     & 4.18                                      & 4.08                      & 0.53                         & 2.33\%                          & 4.10                      & 9.89                         & 1.91\%                          \\
&                                        &                                        & 1232                                                     & 1.19                                      & 1.19                      & 0.28                         & 0.00\%                          & 1.17                      & 9.30                         & 1.78\%                          \\
&                                        &                                        & 1233                                                     & 2.61                                      & 2.58                      & 0.29                         & 1.26\%                          & 2.56                      & 9.54                         & 1.88\%                          \\
&                                        &                                        & 1234                                                     & 1.63                                      & 1.60                      & 0.28                         & 2.22\%                          & 1.58                      & 9.45                         & 3.36\%                          \\
&                                        &                                        & 1235                                                     & 4.69                                      & 4.55                      & 0.52                         & 3.01\%                          & 4.55                      & 9.94                         & 2.95\%                          \\
\cline{3-11}
&                                        & \multirow{5}{*}{1000}                  & 1231                                                     & 4.18                                      & 4.09                      & 1.00                         & 2.16\%                          & 4.10                      & 19.60                        & 1.90\%                          \\
&                                        &                                        & 1232                                                     & 1.19                                      & 1.19                      & 0.50                         & 0.00\%                          & 1.16                      & 18.47                        & 2.01\%                          \\
&                                        &                                        & 1233                                                     & 2.60                                      & 2.57                      & 0.51                         & 1.12\%                          & 2.55                      & 19.16                        & 1.89\%                          \\
&                                        &                                        & 1234                                                     & 1.62                                      & 1.59                      & 0.50                         & 1.84\%                          & 1.57                      & 18.34                        & 3.09\%                          \\
&                                        &                                        & 1235                                                     & 4.63                                      & 4.49                      & 1.00                         & 3.02\%                          & 4.47                      & 19.57                        & 3.53\%                          \\
\hline
\multicolumn{7}{r}{Average}                                               & $\mathbf{1.70\%}$                          &                           &                              & $\mathbf{2.43\%}$                          \\
\hline
\multirow{10}{*}{0.100333}                   & \multirow{10}{*}{50}                   & \multirow{5}{*}{500}                   & 1231                                                     & 4.08                                      & 3.92                      & 0.77                         & 3.97\%                          & 3.93                      & 10.34                        & 3.65\%                          \\
&                                        &                                        & 1232                                                     & 1.17                                      & 1.13                      & 0.27                         & 3.47\%                          & 1.13                      & 9.41                         & 3.00\%                          \\
&                                        &                                        & 1233                                                     & 2.56                                      & 2.53                      & 0.30                         & 1.37\%                          & 2.50                      & 9.96                         & 2.27\%                          \\
&                                        &                                        & 1234                                                     & 1.57                                      & 1.53                      & 0.28                         & 2.60\%                          & 1.50                      & 9.41                         & 4.51\%                          \\
&                                        &                                        & 1235                                                     & 4.54                                      & 4.32                      & 0.79                         & 4.79\%                          & 4.32                      & 10.44                        & 4.88\%                          \\
\cline{3-11}
&                                        & \multirow{5}{*}{1000}                  & 1231                                                     & 4.09                                      & 3.97                      & 1.51                         & 3.10\%                          & 3.97                      & 20.92                        & 2.96\%                          \\
&                                        &                                        & 1232                                                     & 1.16                                      & 1.16                      & 0.55                         & 0.00\%                          & 1.13                      & 20.28                        & 2.90\%                          \\
&                                        &                                        & 1233                                                     & 2.55                                      & 2.52                      & 0.51                         & 1.39\%                          & 2.49                      & 18.91                        & 2.34\%                          \\
&                                        &                                        & 1234                                                     & 1.56                                      & 1.51                      & 0.53                         & 2.67\%                          & 1.49                      & 20.75                        & 4.53\%                          \\
&                                        &                                        & 1235                                                     & 4.47                                      & 4.26                      & 1.60                         & 4.65\%                          & 4.25                      & 21.89                        & 4.94\%                          \\
\hline
\multicolumn{7}{r}{Average}                                        & $\mathbf{2.80\%}$                          &                           &                              & $\mathbf{3.60\%}$                          \\
\hline
\multirow{10}{*}{0.200333}                   & \multirow{10}{*}{50}                   & \multirow{5}{*}{500}                   & 1231                                                     & 3.94                                      & 3.70                      & 0.82                         & 6.15\%                          & 3.71                      & 10.58                        & 5.77\%                          \\
&                                        &                                        & 1232                                                     & 1.14                                      & 1.11                      & 0.53                         & 2.42\%                          & 1.09                      & 10.04                        & 4.38\%                          \\
&                                        &                                        & 1233                                                     & 2.50                                      & 2.41                      & 0.53                         & 3.69\%                          & 2.39                      & 9.96                         & 4.56\%                          \\
&                                        &                                        & 1234                                                     & 1.50                                      & 1.41                      & 0.52                         & 6.15\%                          & 1.39                      & 9.89                         & 7.39\%                          \\
&                                        &                                        & 1235                                                     & 4.34                                      & 4.04                      & 0.78                         & 6.96\%                          & 4.02                      & 10.52                        & 7.33\%                          \\
\cline{3-11}
&                                        & \multirow{5}{*}{1000}                  & 1231                                                     & 3.97                                      & 3.76                      & 1.57                         & 5.27\%                          & 3.74                      & 21.74                        & 5.84\%                          \\
&                                        &                                        & 1232                                                     & 1.13                                      & 1.11                      & 1.01                         & 2.35\%                          & 1.09                      & 20.58                        & 4.30\%                          \\
&                                        &                                        & 1233                                                     & 2.49                                      & 2.40                      & 1.01                         & 3.71\%                          & 2.38                      & 20.16                        & 4.45\%                          \\
&                                        &                                        & 1234                                                     & 1.48                                      & 1.38                      & 1.06                         & 6.82\%                          & 1.36                      & 20.98                        & 8.27\%                          \\
&                                        &                                        & 1235                                                     & 4.27                                      & 3.97                      & 1.52                         & 6.85\%                          & 3.96                      & 21.26                        & 7.12\%                          \\
\hline
\multicolumn{7}{r}{Average}                     & $\mathbf{5.04\%}$                          &                           &                              & $\mathbf{5.94\%}$                          \\
\hline
\multirow{10}{*}{0.300333}                   & \multirow{10}{*}{50}                   & \multirow{5}{*}{500}                   & 1231                                                     & 3.82                                      & 3.48                      & 0.78                         & 9.13\%                          & 3.47                      & 10.90                        & 9.15\%                          \\
&                                        &                                        & 1232                                                     & 1.11                                      & 1.08                      & 0.53                         & 2.95\%                          & 1.05                      & 10.06                        & 5.51\%                          \\
&                                        &                                        & 1233                                                     & 2.44                                      & 2.30                      & 0.53                         & 5.69\%                          & 2.27                      & 10.01                        & 7.00\%                          \\
&                                        &                                        & 1234                                                     & 1.43                                      & 1.29                      & 0.52                         & 10.16\%                         & 1.25                      & 10.01                        & 12.51\%                         \\
&                                        &                                        & 1235                                                     & 4.18                                      & 3.73                      & 1.04                         & 10.87\%                         & 3.74                      & 11.08                        & 10.45\%                         \\
\cline{3-11}
&                                        & \multirow{5}{*}{1000}                  & 1231                                                     & 3.85                                      & 3.51                      & 1.59                         & 8.70\%                          & 3.50                      & 22.32                        & 8.98\%                          \\
&                                        &                                        & 1232                                                     & 1.11                                      & 1.08                      & 1.00                         & 2.92\%                          & 1.05                      & 20.76                        & 5.31\%                          \\
&                                        &                                        & 1233                                                     & 2.43                                      & 2.29                      & 1.01                         & 5.62\%                          & 2.26                      & 20.80                        & 7.09\%                          \\
&                                        &                                        & 1234                                                     & 1.41                                      & 1.27                      & 1.05                         & 10.38\%                         & 1.23                      & 21.22                        & 12.82\%                         \\
&                                        &                                        & 1235                                                     & 4.10                                      & 3.62                      & 2.06                         & 11.56\%                         & 3.63                      & 23.89                        & 11.38\%                         \\
\hline
\multicolumn{7}{r}{Average}                                                & $\mathbf{7.80\%}$                          &                           &                              & $\mathbf{9.02\%}$   
\\
\hline
\end{tabular}
}
\end{table}

\section{Numerical Results Using IPOPT Solver (\citealt{wachter2006implementation})}
\label{appendix_sec_numerical_ipopt}

To further evaluate the performance of our method discussed in Section~\ref{sec_numerical_study}, we compare it with the IPOPT solver (see, e.g., \citealt{wachter2006implementation}) for directly solving the scaled CVaR approximation \eqref{ccp_cvar_eq_scale}. We consider solving the scaled CVaR approximation \eqref{ccp_cvar_eq_scale} under different upper-bound settings, that is, we consider $\alpha_U \in\{ 10000, 20000, 50000\}$ in the scaled CVaR approximation \eqref{ccp_cvar_eq_scale} as
\begin{align*}
v^{\CVaR}_S=\min_{\bm x\in \X,\beta\leq 0,\bm s\geq \bm 0,\bm\alpha} \left\{ \bm{c}^\top\bm{x}\colon \varepsilon\beta+ \sum_{i\in[N]}p_i s_i\leq 0, s_i+\beta\geq \alpha_i g(\bm x,\rxi^i),~i\in[N], 1\leq \alpha_i\leq \alpha_U,~i\in[N]\right\}.
\end{align*}
For all numerical results presented in this section, we use the solution of CVaR approximation \eqref{ccp_cvar_eq_1} as the initial starting point for the IPOPT solver. Each instance is solved with a time limit of $3600$ seconds, using the default settings described in \cite{wachter2006implementation}. If we cannot solve the instance within the time limit, we denote the result as ``---." We use ``\textrm{Improvement}'' to denote the percentage of differences between the value obtained from the IPOPT solver and CVaR approximation, i.e.,
\begin{align*}
\textrm{Improvement }(\%) = \frac{ \CVaR \textrm{ approximation value}-\textrm{value of the IPOPT solver} }{|\CVaR \textrm{ approximation value}|}\times 100\%.
\end{align*}
Since the solution of the IPOPT solver may converge to a local optimum that is worse than the initial solution, the Improvement reported may be negative. If IPOPT fails to find feasible solutions within the time limit, the average (marked with parentheses) will be calculated over instances for which IPOPT does find a feasible solution.

Following the same settings as those described in Section~\ref{sec_numerical_study_joint}, the results obtained using the IPOPT solver are presented in Table~\ref{tab_joint_ipopt_eps_020}. 
We find that the IPOPT solver is unstable when solving the scaled CVaR approximation \eqref{ccp_cvar_eq_scale} and the performance of IPOPT is sensitive to the choice of the upper bound of $\bm\alpha$. Regardless of the upper bound on $\bm\alpha$, the IPOPT solver fails to solve all instances to local optima. 

\begin{table}[htbp]
\centering
\tiny
\caption{Numerical Results of a Joint CCP with \rev{Different} $\varepsilon$ Using the IPOPT Solver}
\renewcommand{\arraystretch}{1}
\label{tab_joint_ipopt_eps_020}
\setlength{\tabcolsep}{1pt}
\rev{
\begin{tabular}{ccccrrrrrrrrrr}
\hline
\multirow{2}{*}{$\varepsilon$} & \multirow{2}{*}{$n$} & \multirow{2}{*}{$J$} & \multirow{2}{*}{Instance} & \multirow{2}{*}{\makecell{CVaR \\ Approximation}} & \multicolumn{3}{c}{IPOPT with $\alpha_U = 10000$}        & \multicolumn{3}{c}{IPOPT with $\alpha_U = 20000$}        & \multicolumn{3}{c}{IPOPT with $\alpha_U = 50000$}        \\ \cline{6-14} 
&                       &                       &                           &                         & \multicolumn{1}{c}{Value} & \multicolumn{1}{c}{Improvement} & \multicolumn{1}{c}{Time (s)} & \multicolumn{1}{c}{Value} & \multicolumn{1}{c}{Improvement} & \multicolumn{1}{c}{Time (s)} & \multicolumn{1}{c}{Value} & \multicolumn{1}{c}{Improvement} & \multicolumn{1}{c}{Time (s)} \\ \hline
\multirow{15}{*}{$0.050333$} & \multirow{5}{*}{$20$} & \multirow{5}{*}{$10$} & 1-4-multi-3000-1 & $-5789.51$ & \multicolumn{1}{r}{$-5856.36$} & \multicolumn{1}{r}{$1.15\%$} & $42.98$ & \multicolumn{1}{r}{$-3907.21$} & \multicolumn{1}{r}{$-32.51\%$} & $896.29$ & \multicolumn{1}{r}{$-3930.35$} & \multicolumn{1}{r}{$-32.11\%$} & $1137.78$ \\
& & & 1-4-multi-3000-2 & $-5779.51$ & \multicolumn{1}{r}{$-3774.00$} & \multicolumn{1}{r}{$-34.70\%$} & $1081.53$ & \multicolumn{1}{r}{$-3914.96$} & \multicolumn{1}{r}{$-32.26\%$} & $1172.65$ & \multicolumn{1}{r}{$-5847.85$} & \multicolumn{1}{r}{$1.18\%$} & $46.24$ \\
& & & 1-4-multi-3000-3 & $-5769.41$ & \multicolumn{1}{r}{$-5850.97$} & \multicolumn{1}{r}{$1.41\%$} & $36.34$ & \multicolumn{1}{r}{$-5851.30$} & \multicolumn{1}{r}{$1.42\%$} & $50.44$ & \multicolumn{1}{r}{$-5852.15$} & \multicolumn{1}{r}{$1.43\%$} & $45.80$ \\
& & & 1-4-multi-3000-4 & $-5783.57$ & \multicolumn{1}{r}{$-5855.41$} & \multicolumn{1}{r}{$1.24\%$} & $853.19$ & \multicolumn{1}{r}{$-5363.52$} & \multicolumn{1}{r}{$-7.26\%$} & $1120.67$ & \multicolumn{1}{r}{$-5855.88$} & \multicolumn{1}{r}{$1.25\%$} & $42.42$ \\
& & & 1-4-multi-3000-5 & $-5770.51$ & \multicolumn{1}{r}{$-5847.71$} & \multicolumn{1}{r}{$1.34\%$} & $759.02$ & \multicolumn{1}{r}{$-5847.90$} & \multicolumn{1}{r}{$1.34\%$} & $48.89$ & \multicolumn{1}{r}{$-5848.12$} & \multicolumn{1}{r}{$1.34\%$} & $79.78$ \\ \cline{2-14}
& \multirow{5}{*}{$39$} & \multirow{5}{*}{$5$} & 1-6-multi-3000-1 & $-9847.60$ & \multicolumn{1}{r}{$-4715.17$} & \multicolumn{1}{r}{$-52.12\%$} & $1050.76$ & \multicolumn{1}{r}{$-5552.73$} & \multicolumn{1}{r}{$-43.61\%$} & $1958.64$ & \multicolumn{1}{r}{$-9963.15$} & \multicolumn{1}{r}{$1.17\%$} & $482.06$ \\
& & & 1-6-multi-3000-2 & $-9866.40$ & \multicolumn{1}{r}{$-9986.35$} & \multicolumn{1}{r}{$1.22\%$} & $110.18$ & \multicolumn{1}{r}{$-9985.54$} & \multicolumn{1}{r}{$1.21\%$} & $756.33$ & \multicolumn{1}{r}{---} & \multicolumn{1}{r}{---} & $3600.00$ \\
& & & 1-6-multi-3000-3 & $-9871.75$ & \multicolumn{1}{r}{$-8897.43$} & \multicolumn{1}{r}{$-9.87\%$} & $275.73$ & \multicolumn{1}{r}{$-3778.40$} & \multicolumn{1}{r}{$-61.73\%$} & $294.53$ & \multicolumn{1}{r}{$-3453.30$} & \multicolumn{1}{r}{$-65.02\%$} & $339.25$ \\
& & & 1-6-multi-3000-4 & $-9883.64$ & \multicolumn{1}{r}{$-10008.02$} & \multicolumn{1}{r}{$1.26\%$} & $126.00$ & \multicolumn{1}{r}{$-6258.29$} & \multicolumn{1}{r}{$-36.68\%$} & $1639.07$ & \multicolumn{1}{r}{---} & \multicolumn{1}{r}{---} & $3600.00$ \\
& & & 1-6-multi-3000-5 & $-9852.74$ & \multicolumn{1}{r}{$-9984.75$} & \multicolumn{1}{r}{$1.34\%$} & $136.82$ & \multicolumn{1}{r}{$-5755.42$} & \multicolumn{1}{r}{$-41.59\%$} & $970.66$ & \multicolumn{1}{r}{---} & \multicolumn{1}{r}{---} & $3600.00$ \\ \cline{2-14}
& \multirow{5}{*}{$50$} & \multirow{5}{*}{$5$} & 1-7-multi-3000-1 & $-15778.48$ & \multicolumn{1}{r}{$-15902.91$} & \multicolumn{1}{r}{$0.79\%$} & $279.86$ & \multicolumn{1}{r}{$-15902.83$} & \multicolumn{1}{r}{$0.79\%$} & $82.59$ & \multicolumn{1}{r}{---} & \multicolumn{1}{r}{---} & $3600.00$ \\
& & & 1-7-multi-3000-2 & $-15750.66$ & \multicolumn{1}{r}{$-15886.40$} & \multicolumn{1}{r}{$0.86\%$} & $155.41$ & \multicolumn{1}{r}{$-15878.77$} & \multicolumn{1}{r}{$0.81\%$} & $225.76$ & \multicolumn{1}{r}{---} & \multicolumn{1}{r}{---} & $3600.00$ \\
& & & 1-7-multi-3000-3 & $-15782.39$ & \multicolumn{1}{r}{$-10966.13$} & \multicolumn{1}{r}{$-30.52\%$} & $344.26$ & \multicolumn{1}{r}{$-10540.66$} & \multicolumn{1}{r}{$-33.21\%$} & $660.41$ & \multicolumn{1}{r}{$-10191.29$} & \multicolumn{1}{r}{$-35.43\%$} & $2740.78$ \\
& & & 1-7-multi-3000-4 & $-15774.55$ & \multicolumn{1}{r}{$-15901.23$} & \multicolumn{1}{r}{$0.80\%$} & $160.82$ & \multicolumn{1}{r}{$-15913.76$} & \multicolumn{1}{r}{$0.88\%$} & $316.42$ & \multicolumn{1}{r}{$-15901.05$} & \multicolumn{1}{r}{$0.80\%$} & $408.35$ \\
& & & 1-7-multi-3000-5 & $-15744.71$ & \multicolumn{1}{r}{$-15881.81$} & \multicolumn{1}{r}{$0.87\%$} & $72.37$ & \multicolumn{1}{r}{$-15873.20$} & \multicolumn{1}{r}{$0.82\%$} & $101.42$ & \multicolumn{1}{r}{$-15874.82$} & \multicolumn{1}{r}{$0.83\%$} & $58.05$ \\ \hline
\multicolumn{6}{r}{Average}    & \multicolumn{1}{r}{$\mathbf{-7.66\%}$}      &                           & \multicolumn{1}{r}{}          & \multicolumn{1}{r}{$\mathbf{-18.77\%}$}      &                           & \multicolumn{1}{r}{}          & \multicolumn{1}{r}{$\mathbf{(-12.45\%)}$}     &                           \\ \hline
\multirow{15}{*}{$0.100333$} & \multirow{5}{*}{$20$} & \multirow{5}{*}{$10$} & 1-4-multi-3000-1 & $-5831.68$ & \multicolumn{1}{r}{$-5910.98$} & \multicolumn{1}{r}{$1.36\%$} & $960.78$ & \multicolumn{1}{r}{$-5907.93$} & \multicolumn{1}{r}{$1.31\%$} & $105.92$ & \multicolumn{1}{r}{$-5908.13$} & \multicolumn{1}{r}{$1.31\%$} & $65.95$ \\
& & & 1-4-multi-3000-2 & $-5828.17$ & \multicolumn{1}{r}{$-5912.97$} & \multicolumn{1}{r}{$1.46\%$} & $53.91$ & \multicolumn{1}{r}{$-5913.35$} & \multicolumn{1}{r}{$1.46\%$} & $38.30$ & \multicolumn{1}{r}{$-5913.26$} & \multicolumn{1}{r}{$1.46\%$} & $82.27$ \\
& & & 1-4-multi-3000-3 & $-5821.63$ & \multicolumn{1}{r}{$-4277.83$} & \multicolumn{1}{r}{$-26.52\%$} & $1553.48$ & \multicolumn{1}{r}{$-4103.27$} & \multicolumn{1}{r}{$-29.52\%$} & $1206.77$ & \multicolumn{1}{r}{$-5905.92$} & \multicolumn{1}{r}{$1.45\%$} & $57.44$ \\
& & & 1-4-multi-3000-4 & $-5829.83$ & \multicolumn{1}{r}{$-5909.92$} & \multicolumn{1}{r}{$1.37\%$} & $42.35$ & \multicolumn{1}{r}{$-5911.07$} & \multicolumn{1}{r}{$1.39\%$} & $50.75$ & \multicolumn{1}{r}{$-5911.35$} & \multicolumn{1}{r}{$1.40\%$} & $54.37$ \\
& & & 1-4-multi-3000-5 & $-5819.61$ & \multicolumn{1}{r}{$-5894.85$} & \multicolumn{1}{r}{$1.29\%$} & $51.62$ & \multicolumn{1}{r}{$-5895.32$} & \multicolumn{1}{r}{$1.30\%$} & $60.99$ & \multicolumn{1}{r}{$-4112.11$} & \multicolumn{1}{r}{$-29.34\%$} & $1120.31$ \\ \cline{2-14}
& \multirow{5}{*}{$39$} & \multirow{5}{*}{$5$} & 1-6-multi-3000-1 & $-9925.97$ & \multicolumn{1}{r}{$-10061.82$} & \multicolumn{1}{r}{$1.37\%$} & $220.22$ & \multicolumn{1}{r}{$-10062.52$} & \multicolumn{1}{r}{$1.38\%$} & $34.90$ & \multicolumn{1}{r}{$-10062.83$} & \multicolumn{1}{r}{$1.38\%$} & $46.56$ \\
& & & 1-6-multi-3000-2 & $-9945.20$ & \multicolumn{1}{r}{$-10086.21$} & \multicolumn{1}{r}{$1.42\%$} & $457.22$ & \multicolumn{1}{r}{$-10084.84$} & \multicolumn{1}{r}{$1.40\%$} & $132.32$ & \multicolumn{1}{r}{---} & \multicolumn{1}{r}{---} & $3600.00$ \\
& & & 1-6-multi-3000-3 & $-9956.57$ & \multicolumn{1}{r}{$-10107.55$} & \multicolumn{1}{r}{$1.52\%$} & $88.06$ & \multicolumn{1}{r}{$-10106.39$} & \multicolumn{1}{r}{$1.50\%$} & $1129.46$ & \multicolumn{1}{r}{---} & \multicolumn{1}{r}{---} & $3600.00$ \\
& & & 1-6-multi-3000-4 & $-9970.29$ & \multicolumn{1}{r}{$-10121.97$} & \multicolumn{1}{r}{$1.52\%$} & $65.15$ & \multicolumn{1}{r}{$-10119.67$} & \multicolumn{1}{r}{$1.50\%$} & $1103.21$ & \multicolumn{1}{r}{$-10120.66$} & \multicolumn{1}{r}{$1.51\%$} & $185.91$ \\
& & & 1-6-multi-3000-5 & $-9937.59$ & \multicolumn{1}{r}{$-6681.68$} & \multicolumn{1}{r}{$-32.76\%$} & $2564.60$ & \multicolumn{1}{r}{$-10083.03$} & \multicolumn{1}{r}{$1.46\%$} & $459.03$ & \multicolumn{1}{r}{---} & \multicolumn{1}{r}{---} & $3600.00$ \\ \cline{2-14}
& \multirow{5}{*}{$50$} & \multirow{5}{*}{$5$} & 1-7-multi-3000-1 & $-15863.65$ & \multicolumn{1}{r}{$-16019.40$} & \multicolumn{1}{r}{$0.98\%$} & $90.13$ & \multicolumn{1}{r}{$-16017.69$} & \multicolumn{1}{r}{$0.97\%$} & $1704.50$ & \multicolumn{1}{r}{$-16016.49$} & \multicolumn{1}{r}{$0.96\%$} & $176.73$ \\
& & & 1-7-multi-3000-2 & $-15838.68$ & \multicolumn{1}{r}{$-10946.83$} & \multicolumn{1}{r}{$-30.89\%$} & $2114.74$ & \multicolumn{1}{r}{---} & \multicolumn{1}{r}{---} & $3600.00$ & \multicolumn{1}{r}{$-16005.89$} & \multicolumn{1}{r}{$1.06\%$} & $1223.53$ \\
& & & 1-7-multi-3000-3 & $-15864.52$ & \multicolumn{1}{r}{$-12869.22$} & \multicolumn{1}{r}{$-18.88\%$} & $2280.27$ & \multicolumn{1}{r}{$-16002.81$} & \multicolumn{1}{r}{$0.87\%$} & $72.19$ & \multicolumn{1}{r}{$-16003.88$} & \multicolumn{1}{r}{$0.88\%$} & $757.04$ \\
& & & 1-7-multi-3000-4 & $-15860.93$ & \multicolumn{1}{r}{$-16019.43$} & \multicolumn{1}{r}{$1.00\%$} & $86.14$ & \multicolumn{1}{r}{$-16019.11$} & \multicolumn{1}{r}{$1.00\%$} & $135.34$ & \multicolumn{1}{r}{$-16012.86$} & \multicolumn{1}{r}{$0.96\%$} & $150.48$ \\
& & & 1-7-multi-3000-5 & $-15833.63$ & \multicolumn{1}{r}{$-15992.87$} & \multicolumn{1}{r}{$1.01\%$} & $126.80$ & \multicolumn{1}{r}{---} & \multicolumn{1}{r}{---} & $3600.00$ & \multicolumn{1}{r}{---} & \multicolumn{1}{r}{---} & $3600.00$ \\ \hline
\multicolumn{6}{r}{Average}    & \multicolumn{1}{r}{$\mathbf{-6.32\%}$}      &                           & \multicolumn{1}{r}{}          & \multicolumn{1}{r}{$\mathbf{(-1.07\%)}$}      &                           & \multicolumn{1}{r}{}          & \multicolumn{1}{r}{$\mathbf{(-1.54\%)}$}     &                           \\ \hline
\multirow{15}{*}{$0.200333$} & \multirow{5}{*}{$20$} & \multirow{5}{*}{$10$} & 1-4-multi-3000-1 & $-5882.23$ & \multicolumn{1}{r}{$-5980.90$} & \multicolumn{1}{r}{$1.68\%$} & $60.33$ & \multicolumn{1}{r}{$-5981.29$} & \multicolumn{1}{r}{$1.68\%$} & $92.64$ & \multicolumn{1}{r}{$-5982.58$} & \multicolumn{1}{r}{$1.71\%$} & $79.71$ \\
& & & 1-4-multi-3000-2 & $-5882.54$ & \multicolumn{1}{r}{$-5975.39$} & \multicolumn{1}{r}{$1.58\%$} & $118.26$ & \multicolumn{1}{r}{$-5944.34$} & \multicolumn{1}{r}{$1.05\%$} & $1335.93$ & \multicolumn{1}{r}{$-3775.14$} & \multicolumn{1}{r}{$-35.82\%$} & $1254.68$ \\
& & & 1-4-multi-3000-3 & $-5878.65$ & \multicolumn{1}{r}{---} & \multicolumn{1}{r}{---} & $3600.00$ & \multicolumn{1}{r}{$-5981.66$} & \multicolumn{1}{r}{$1.75\%$} & $79.12$ & \multicolumn{1}{r}{$-3953.03$} & \multicolumn{1}{r}{$-32.76\%$} & $1008.98$ \\
& & & 1-4-multi-3000-4 & $-5885.45$ & \multicolumn{1}{r}{$-5985.57$} & \multicolumn{1}{r}{$1.70\%$} & $80.63$ & \multicolumn{1}{r}{$-5985.85$} & \multicolumn{1}{r}{$1.71\%$} & $72.83$ & \multicolumn{1}{r}{$-3801.25$} & \multicolumn{1}{r}{$-35.41\%$} & $1060.49$ \\
& & & 1-4-multi-3000-5 & $-5873.65$ & \multicolumn{1}{r}{$-5969.43$} & \multicolumn{1}{r}{$1.63\%$} & $96.34$ & \multicolumn{1}{r}{$-5969.97$} & \multicolumn{1}{r}{$1.64\%$} & $70.32$ & \multicolumn{1}{r}{$-5970.48$} & \multicolumn{1}{r}{$1.65\%$} & $90.51$ \\ \cline{2-14}
& \multirow{5}{*}{$39$} & \multirow{5}{*}{$5$} & 1-6-multi-3000-1 & $-10021.39$ & \multicolumn{1}{r}{$-10193.10$} & \multicolumn{1}{r}{$1.71\%$} & $72.18$ & \multicolumn{1}{r}{$-10192.75$} & \multicolumn{1}{r}{$1.71\%$} & $738.18$ & \multicolumn{1}{r}{$-10194.72$} & \multicolumn{1}{r}{$1.73\%$} & $224.50$ \\
& & & 1-6-multi-3000-2 & $-10041.39$ & \multicolumn{1}{r}{$-10214.84$} & \multicolumn{1}{r}{$1.73\%$} & $86.70$ & \multicolumn{1}{r}{$-10212.85$} & \multicolumn{1}{r}{$1.71\%$} & $215.10$ & \multicolumn{1}{r}{$-10212.41$} & \multicolumn{1}{r}{$1.70\%$} & $444.14$ \\
& & & 1-6-multi-3000-3 & $-10060.69$ & \multicolumn{1}{r}{$-10245.91$} & \multicolumn{1}{r}{$1.84\%$} & $77.05$ & \multicolumn{1}{r}{---} & \multicolumn{1}{r}{---} & $3600.00$ & \multicolumn{1}{r}{$-10246.20$} & \multicolumn{1}{r}{$1.84\%$} & $2295.06$ \\
& & & 1-6-multi-3000-4 & $-10077.44$ & \multicolumn{1}{r}{---} & \multicolumn{1}{r}{---} & $3600.00$ & \multicolumn{1}{r}{$-10274.30$} & \multicolumn{1}{r}{$1.95\%$} & $199.66$ & \multicolumn{1}{r}{$-10270.80$} & \multicolumn{1}{r}{$1.92\%$} & $380.29$ \\
& & & 1-6-multi-3000-5 & $-10037.88$ & \multicolumn{1}{r}{$-10229.68$} & \multicolumn{1}{r}{$1.91\%$} & $944.48$ & \multicolumn{1}{r}{$-10230.54$} & \multicolumn{1}{r}{$1.92\%$} & $144.37$ & \multicolumn{1}{r}{$-10228.89$} & \multicolumn{1}{r}{$1.90\%$} & $209.22$ \\ \cline{2-14}
& \multirow{5}{*}{$50$} & \multirow{5}{*}{$5$} & 1-7-multi-3000-1 & $-15971.22$ & \multicolumn{1}{r}{$-16161.54$} & \multicolumn{1}{r}{$1.19\%$} & $80.40$ & \multicolumn{1}{r}{$-16160.78$} & \multicolumn{1}{r}{$1.19\%$} & $110.50$ & \multicolumn{1}{r}{$-16160.89$} & \multicolumn{1}{r}{$1.19\%$} & $264.81$ \\
& & & 1-7-multi-3000-2 & $-15953.23$ & \multicolumn{1}{r}{---} & \multicolumn{1}{r}{---} & $3600.00$ & \multicolumn{1}{r}{$-16154.10$} & \multicolumn{1}{r}{$1.26\%$} & $115.48$ & \multicolumn{1}{r}{$-16155.65$} & \multicolumn{1}{r}{$1.27\%$} & $246.37$ \\
& & & 1-7-multi-3000-3 & $-15966.21$ & \multicolumn{1}{r}{$-16161.73$} & \multicolumn{1}{r}{$1.22\%$} & $110.24$ & \multicolumn{1}{r}{$-16160.10$} & \multicolumn{1}{r}{$1.21\%$} & $128.82$ & \multicolumn{1}{r}{$-16160.41$} & \multicolumn{1}{r}{$1.22\%$} & $264.63$ \\
& & & 1-7-multi-3000-4 & $-15973.34$ & \multicolumn{1}{r}{$-16166.21$} & \multicolumn{1}{r}{$1.21\%$} & $77.53$ & \multicolumn{1}{r}{---} & \multicolumn{1}{r}{---} & $3600.00$ & \multicolumn{1}{r}{---} & \multicolumn{1}{r}{---} & $3600.00$ \\
& & & 1-7-multi-3000-5 & $-15948.75$ & \multicolumn{1}{r}{$-16150.54$} & \multicolumn{1}{r}{$1.27\%$} & $126.65$ & \multicolumn{1}{r}{$-16149.75$} & \multicolumn{1}{r}{$1.26\%$} & $102.94$ & \multicolumn{1}{r}{---} & \multicolumn{1}{r}{---} & $3600.00$ \\ \hline
\multicolumn{6}{r}{Average}    & \multicolumn{1}{r}{$\mathbf{(1.56\%)}$}      &                           & \multicolumn{1}{r}{}          & \multicolumn{1}{r}{$\mathbf{(1.54\%)}$}      &                           & \multicolumn{1}{r}{}          & \multicolumn{1}{r}{$\mathbf{(-6.76\%)}$}     &                           \\ \hline
\multirow{15}{*}{$0.300333$} & \multirow{5}{*}{$20$} & \multirow{5}{*}{$10$} & 1-4-multi-3000-1 & $-5918.46$ & \multicolumn{1}{r}{$-5529.07$} & \multicolumn{1}{r}{$-6.58\%$} & $1109.47$ & \multicolumn{1}{r}{$-5967.94$} & \multicolumn{1}{r}{$0.84\%$} & $1219.53$ & \multicolumn{1}{r}{$-6035.95$} & \multicolumn{1}{r}{$1.99\%$} & $115.28$ \\
& & & 1-4-multi-3000-2 & $-5918.62$ & \multicolumn{1}{r}{$-6031.59$} & \multicolumn{1}{r}{$1.91\%$} & $83.70$ & \multicolumn{1}{r}{$-6031.64$} & \multicolumn{1}{r}{$1.91\%$} & $66.21$ & \multicolumn{1}{r}{$-6032.01$} & \multicolumn{1}{r}{$1.92\%$} & $160.50$ \\
& & & 1-4-multi-3000-3 & $-5917.11$ & \multicolumn{1}{r}{$-6032.30$} & \multicolumn{1}{r}{$1.95\%$} & $68.59$ & \multicolumn{1}{r}{$-6032.78$} & \multicolumn{1}{r}{$1.95\%$} & $540.16$ & \multicolumn{1}{r}{$-4373.25$} & \multicolumn{1}{r}{$-26.09\%$} & $1084.91$ \\
& & & 1-4-multi-3000-4 & $-5923.33$ & \multicolumn{1}{r}{$-6037.02$} & \multicolumn{1}{r}{$1.92\%$} & $299.47$ & \multicolumn{1}{r}{$-6037.10$} & \multicolumn{1}{r}{$1.92\%$} & $134.30$ & \multicolumn{1}{r}{$-6036.37$} & \multicolumn{1}{r}{$1.91\%$} & $103.80$ \\
& & & 1-4-multi-3000-5 & $-5911.10$ & \multicolumn{1}{r}{$-4329.24$} & \multicolumn{1}{r}{$-26.76\%$} & $1755.72$ & \multicolumn{1}{r}{$-6030.15$} & \multicolumn{1}{r}{$2.01\%$} & $139.35$ & \multicolumn{1}{r}{$-3832.07$} & \multicolumn{1}{r}{$-35.17\%$} & $1209.61$ \\ \cline{2-14}
& \multirow{5}{*}{$39$} & \multirow{5}{*}{$5$} & 1-6-multi-3000-1 & $-10091.29$ & \multicolumn{1}{r}{$-10301.80$} & \multicolumn{1}{r}{$2.09\%$} & $185.77$ & \multicolumn{1}{r}{$-10302.85$} & \multicolumn{1}{r}{$2.10\%$} & $1137.36$ & \multicolumn{1}{r}{$-10303.76$} & \multicolumn{1}{r}{$2.11\%$} & $290.70$ \\
& & & 1-6-multi-3000-2 & $-10110.44$ & \multicolumn{1}{r}{$-10319.13$} & \multicolumn{1}{r}{$2.06\%$} & $78.86$ & \multicolumn{1}{r}{$-10320.36$} & \multicolumn{1}{r}{$2.08\%$} & $108.68$ & \multicolumn{1}{r}{$-10320.34$} & \multicolumn{1}{r}{$2.08\%$} & $299.29$ \\
& & & 1-6-multi-3000-3 & $-10136.15$ & \multicolumn{1}{r}{$-10350.59$} & \multicolumn{1}{r}{$2.12\%$} & $66.52$ & \multicolumn{1}{r}{$-10351.18$} & \multicolumn{1}{r}{$2.12\%$} & $107.02$ & \multicolumn{1}{r}{$-10352.26$} & \multicolumn{1}{r}{$2.13\%$} & $248.15$ \\
& & & 1-6-multi-3000-4 & $-10154.42$ & \multicolumn{1}{r}{$-10393.37$} & \multicolumn{1}{r}{$2.35\%$} & $79.72$ & \multicolumn{1}{r}{$-10395.64$} & \multicolumn{1}{r}{$2.38\%$} & $105.96$ & \multicolumn{1}{r}{$-10394.16$} & \multicolumn{1}{r}{$2.36\%$} & $193.27$ \\
& & & 1-6-multi-3000-5 & $-10112.49$ & \multicolumn{1}{r}{$-10328.75$} & \multicolumn{1}{r}{$2.14\%$} & $103.98$ & \multicolumn{1}{r}{$-10328.43$} & \multicolumn{1}{r}{$2.14\%$} & $112.36$ & \multicolumn{1}{r}{---} & \multicolumn{1}{r}{---} & $3600.00$ \\ \cline{2-14}
& \multirow{5}{*}{$50$} & \multirow{5}{*}{$5$} & 1-7-multi-3000-1 & $-16046.86$ & \multicolumn{1}{r}{$-16272.00$} & \multicolumn{1}{r}{$1.40\%$} & $134.34$ & \multicolumn{1}{r}{---} & \multicolumn{1}{r}{---} & $3600.00$ & \multicolumn{1}{r}{---} & \multicolumn{1}{r}{---} & $3600.00$ \\
& & & 1-7-multi-3000-2 & $-16035.80$ & \multicolumn{1}{r}{$-16280.56$} & \multicolumn{1}{r}{$1.53\%$} & $96.06$ & \multicolumn{1}{r}{$-16284.22$} & \multicolumn{1}{r}{$1.55\%$} & $126.57$ & \multicolumn{1}{r}{$-16285.86$} & \multicolumn{1}{r}{$1.56\%$} & $279.63$ \\
& & & 1-7-multi-3000-3 & $-16043.21$ & \multicolumn{1}{r}{$-16271.90$} & \multicolumn{1}{r}{$1.43\%$} & $105.23$ & \multicolumn{1}{r}{$-16272.40$} & \multicolumn{1}{r}{$1.43\%$} & $160.19$ & \multicolumn{1}{r}{$-16275.51$} & \multicolumn{1}{r}{$1.45\%$} & $880.99$ \\
& & & 1-7-multi-3000-4 & $-16050.82$ & \multicolumn{1}{r}{$-16270.86$} & \multicolumn{1}{r}{$1.37\%$} & $100.12$ & \multicolumn{1}{r}{$-16271.18$} & \multicolumn{1}{r}{$1.37\%$} & $96.51$ & \multicolumn{1}{r}{$-16272.31$} & \multicolumn{1}{r}{$1.38\%$} & $212.77$ \\
& & & 1-7-multi-3000-5 & $-16030.74$ & \multicolumn{1}{r}{$-16266.31$} & \multicolumn{1}{r}{$1.47\%$} & $132.44$ & \multicolumn{1}{r}{$-15314.50$} & \multicolumn{1}{r}{$-4.47\%$} & $120.63$ & \multicolumn{1}{r}{---} & \multicolumn{1}{r}{---} & $3600.00$ \\ \hline
\multicolumn{6}{r}{Average}    & \multicolumn{1}{r}{$\mathbf{-0.64}\%$} &                           & \multicolumn{1}{r}{}          & \multicolumn{1}{r}{$(\mathbf{1.38}\%)$}      &                           & \multicolumn{1}{r}{}          & \multicolumn{1}{r}{$(\mathbf{-3.53}\%)$}     &                           \\ \hline
\end{tabular}
}
\end{table}

\end{document}